\documentclass[11pt,reqno,a4paper]{amsart}  
\usepackage{amssymb,amsthm,amsmath}
\usepackage[mathscr]{eucal} 
\usepackage[latin1]{inputenc}

\usepackage{lmodern}
\usepackage[T1]{fontenc}
\usepackage[english]{babel}

\usepackage[linktoc=page]{hyperref}

\usepackage[top=3.2cm, bottom=3.6cm, left=2.7cm, right=2.7cm]{geometry}

\renewcommand{\leq}{\leqslant}
\renewcommand{\geq}{\geqslant}

\newcommand{\No}{\mathrm{N}} 
\newcommand{\Oc}{\mathcal{O}} 
\newcommand{\Repr}{\mathcal{R}}

\newcommand{\Qppq}{\Qq(\sqrt{q})}

\newcommand{\Kcl}{\mathcal{K}}  
\newcommand{\Hcl}{\mathcal{H}}  
\newcommand{\Hhat}{\widehat{\Hcl}}  
\newcommand{\Hhatc}{\widehat{\Hcl}_1}

\newcommand{\wpsi}{\widetilde{\psi}}

\newcommand{\chiq}{\chi^{(q)}}  

\newcommand{\wc}{c_0}

\newcommand{\eps}{\varepsilon}

\newcommand{\aF}{\mathfrak a}
\newcommand{\bF}{\mathfrak b}
\newcommand{\pF}{\mathfrak p}

\newcommand{\DF}{\mathfrak F}

\def\stacksum#1#2{{\stackrel{{\scriptstyle #1}}
{{\scriptstyle #2}}}}

\newcommand{\Cc}{\mathbb{C}}

\newcommand{\Zz}{\mathbb{Z}}
\newcommand{\Rr}{\mathbb{R}}
\newcommand{\Qq}{\mathbb{Q}}

\newcommand{\mods}[1]{\,(\mathrm{mod}\,{#1})}

\DeclareMathOperator{\Reel}{Re}
\DeclareMathOperator{\li}{li}

\DeclareMathOperator{\GL}{GL}
\DeclareMathOperator{\SL}{SL}
\DeclareMathOperator{\ord}{ord}

\newcommand{\demi}{{\textstyle{\frac{1}{2}}}}

\numberwithin{equation}{section}

\theoremstyle{plain}
\newtheorem{theorem}{Theorem}[section]

\newtheorem{lemma}[theorem]{Lemma}
\newtheorem{corollary}[theorem]{Corollary}

\newtheorem{proposition}[theorem]{Proposition}

\theoremstyle{remark}

\theoremstyle{definition}
\newtheorem{remarkf}[theorem]{Remark}
\newtheorem*{remf}{Remark}

\newtheorem{definition}[theorem]{Definition}
\newtheorem*{ack}{Acknowledgements}

\hyphenation{Theorem Section Proposition Sections Hypothesis particular quadratic almost Chebyshev}

\begin{document}


\title[Primes represented by binary quadratic forms: The average distribution]{On the average distribution of primes represented by binary quadratic forms}

\date{\today}

\author[Jakob Ditchen]{Jakob J. Ditchen}
\address{ETH Z\"urich -- D-MATH\\
  R\"amistrasse 101\\
  8092 Z\"urich\\
  Switzerland} 
 \email{jakob.ditchen@math.ethz.ch}

\subjclass[2010]{11N05, 11N32, 11N36, 11N75}

\begin{abstract}
 We investigate the average distribution of primes represented by positive definite integral binary quadratic forms, the average being taken over negative fundamental discriminants in long ranges. In particular, we prove corresponding results of Bombieri--Vinogradov type and of Barban--Davenport--Halberstam type, although with shorter ranges than in the original theorems for primes in arithmetic progressions: The results imply that, for all $\eps>0$, the least prime that can be represented by any given positive definite binary quadratic form of discriminant $q$ is smaller than $|q|^{7+\eps}$ for all forms to ``most'' discriminants; moreover, it is even smaller than $|q|^{3+\eps}$ for ``most'' forms to ``most'' discriminants.
\end{abstract}

\maketitle
\setcounter{tocdepth}{1}
\tableofcontents


\section{Introduction and statement of the main results}\label{sec:intro}

 Results on the average distribution of prime numbers in arithmetic progressions have often proved to be suitable substitutes for conditional statements that rely on the Generalized Riemann Hypothesis -- and sometimes even surpass its direct consequences. The Bombieri--Vinogradov theorem~\cite{Bom65},\cite{Vin65,Vin66} and the Barban--Davenport--Halberstam theorem \cite{Bar66},\cite{DH66,DH66_corr}  from the 1960s are two of the most prominent and influential of these results. The first theorem states that, for each $A>0$, there exists a number  $B=B(A)$ such that
\begin{equation}\label{eq:BV_orig}
\sum_{q\leq Q} \max_\stacksum{a \leq q}{(a,q)=1} \Big| \pi(X;q,a) - \frac{\li(X)}{\varphi(q)}\Big| \ll_A X(\log X)^{-A}
\end{equation}
for $Q \leq X^{1/2}(\log X)^{-B}$; here $\pi(X;q,a)$ denotes the number of primes $p \leq X$ with\linebreak $p \equiv a \mods q$ for any pair $(a,q)$ of coprime integers, $\li(X)=\int_2^X \frac{1}{\log t}\, dt$ is the logarithmic integral and $\varphi(q)$ is the Euler totient function.  That is, the error term in the prime number theorem for arithmetic progressions is small -- as small as predicted by the Riemann Hypothesis~-- for all reduced residue classes,  ``on~average'' over moduli in about the same range of moduli in which the Riemann Hypothesis yields non-trivial results. The second theorem shows that the mean square of the error term is small for an even longer range of moduli if one averages over both moduli and their reduced residue classes: For each $A>0$, there exists a number  $B=B(A)$ such that
\begin{equation}\label{eq:BDH_orig}
\sum_{q\leq Q} \sum_\stacksum{a \leq q}{(a,q)=1} \left( \pi(X;q,a) - \frac{\li(X)}{\varphi(q)}\right)^2 \ll_A X^2(\log X)^{-A}
\end{equation}
for $Q \leq X(\log X)^{-B}$. 

\smallskip
Apart from arithmetic progressions, integral binary quadratic forms constitute the simplest\linebreak family of polynomials -- and, in fact, one of only very few families of polynomials in two variables -- that are known to represent infinitely many prime numbers unless there is an obvious obstacle by means of a common prime divisor of the coefficients. 
Analytic questions on primes which are representable by any \emph{fixed} binary quadratic form have been studied almost as extensively as analytic questions on primes in fixed arithmetic progressions: De la Vall\'ee Poussin's seminal work \cite{Pou96} on the prime number theorem does not only contain proofs for the prime number theorem in its ordinary form and for primes in arithmetic progressions but also for primes represented by positive definite binary quadratic forms. Moreover, the best known upper bounds for the error terms in both prime number theorems are essentially the same. 

\smallskip
In this paper we prove average distribution results of the shapes \eqref{eq:BV_orig} and \eqref{eq:BDH_orig} for primes that are representable by integral binary quadratic forms of \emph{various} negative fundamental discriminants in long ranges. 
In particular, we show: 
\begin{theorem}\label{thm:BV_01_PNT}
For any $Q \geq 1$, let $\DF(Q)$ denote the set of all negative fundamental discriminants $q \not \equiv 0 \mods 8$ with $|q| \leq Q$. 
 For any form class $C$ in the form class group $\Kcl(q)$ of any discriminant $q \in \DF(Q)$, let $e(C)=2$ if $C$ is of order at most two in $\Kcl(q)$ and $e(C)=1$ otherwise; moreover, let $\pi(X;q,C)$ be the number of primes $p\leq X$ that can be represented by all 
 binary quadratic forms in~$C \in \Kcl(q)$. Let $h(q)=|\Kcl(q)|$ denote the class number for~$q \in \DF(Q)$. 

For each 
 $A>0$ and each 
$\eps>0$, there exists a real number $B=B(A)$ such that 
  \begin{equation}\label{eq:my_bv_PNT}
  \sum_{q \in \DF(Q)} \max_{C \in \Kcl(q)} \Big|\pi(X;q,C)- \frac{\li(X)}{e(C) h(q)}\Big|\ll_{A,\eps} Q^{1/2}X(\log X)^{-A}
  \end{equation}
  if $ Q^{20/3+\eps} \leq X(\log X)^{-B}$. 
\end{theorem}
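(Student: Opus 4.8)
The plan is to translate the problem into one about prime ideals in the imaginary quadratic fields $K=\Qpq q$, expand over characters of the ideal class group, and run a Bombieri--Vinogradov argument over the resulting family of degree-two Hecke $L$-functions, with the average over fundamental discriminants playing the role of the average over moduli. First I would invoke the classical bijection between $\SL_2(\Zz)$-classes of primitive positive definite forms of fundamental discriminant $q$ and the ideal classes of $\Oc_K$. Away from the primes dividing $2q$, a prime $p$ is representable by the forms in a class $C$ exactly when $p$ splits in $K$ and a prime $\pF$ above $p$ lies in the ideal class corresponding to $C$. Since $[\overline\pF]=[\pF]^{-1}$, the primes $\pF$ and $\overline\pF$ lie in the same class precisely when that class has order at most two, so counting the degree-one prime ideals of norm $\le X$ in the class of $C$ over-counts the represented primes $\le X$ by exactly the factor $e(C)$, the finitely many ramified primes contributing $O(\omega(q))$. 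Orthogonality of the characters of $\Kcl(q)\cong\widehat{\Kcl(q)}$ then gives
\[
\pi(X;q,C)=\frac{1}{e(C)\,h(q)}\sum_{\chi\in\widehat{\Kcl(q)}}\overline\chi(C)\,\pi(X,\chi)+O(\omega(q)),\qquad \pi(X,\chi):=\sum_{\substack{\deg\pF=1\\ \No\pF\le X}}\chi(\pF),
\]
in which the trivial character yields the main term: $\pi(X,\chi_0)$ is the number of degree-one primes of norm $\le X$, equal to $\li(X)+O\big(X\exp(-c\sqrt{\log X})\big)$ uniformly for $|q|\le(\log X)^B$ by the prime ideal theorem with a Siegel--Walfisz remainder (Siegel's theorem removing any exceptional zero of $L(s,\chi_q)$), and controlled on average for larger $|q|$ together with the genus characters below. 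Switching to the logarithmically weighted ideal sum $\psi(X,\chi)=\sum_{\No\mathfrak n\le X}\Lambda_K(\mathfrak n)\chi(\mathfrak n)$ by partial summation, and noting that $\sum_{q\in\DF(Q)}\omega(q)\ll Q\log Q$ is negligible, the theorem reduces to proving
\[
\sum_{q\in\DF(Q)}\frac{1}{h(q)}\sum_{\chi\neq\chi_0}\big|\psi(X,\chi)\big|\ \ll_{A,\eps}\ Q^{1/2}X(\log X)^{-A}\qquad\text{whenever } Q^{20/3+\eps}\le X(\log X)^{-B}.
\]

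I would split the non-trivial characters into genus (real) characters and complex characters. A genus character of $\Kcl(q)$ corresponds to a factorisation $q=d_1 d_2$ into coprime fundamental discriminants, with $L(s,\chi)=L(s,\chi_{d_1})L(s,\chi_{d_2})$, so that $\psi(X,\chi)$ is, up to $O(\sqrt X\log X)$, a sum of two classical prime sums to the moduli $|d_1|,|d_2|\le Q$. Since the hypothesis forces $Q\ll X^{3/20}$ --- far inside the range of the classical Bombieri--Vinogradov theorem, even in the form carrying divisor weights --- and since the weights $1/h(q)$ are under control on average (by the class number formula together with standard bounds for $L(1,\chi_q)$), this contribution is $\ll_A Q^{1/2}X(\log X)^{-A}$; the same argument disposes of the error attached to $\chi_0$ above. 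The genus characters are thus harmless, and the substance lies with the complex ones.

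For the complex characters I would apply Cauchy--Schwarz in $q$: the weight $1/h(q)$ together with $\#\{\text{complex }\chi\in\widehat{\Kcl(q)}\}\le h(q)$ produces the factor $\big(\sum_{q\in\DF(Q)}1\big)^{1/2}\asymp Q^{1/2}$, and everything reduces to the mean-square (Barban--Davenport--Halberstam-type) estimate
\[
\sum_{q\in\DF(Q)}\frac{1}{h(q)}\sum_{\substack{\chi\in\widehat{\Kcl(q)}\\ \chi\text{ complex}}}\big|\psi(X,\chi)\big|^2\ \ll_{A,\eps}\ X^2(\log X)^{-2A}\qquad\text{whenever } Q^{20/3+\eps}\le X.
\]
This is the heart of the proof. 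I would establish it by the Bombieri--Vinogradov mechanism adapted to the fields $K$: a Vaughan (or Heath--Brown) decomposition of $\Lambda_K$ reduces $\psi(X,\chi)$ to Type I and Type II bilinear sums over ideals, which, after summation over $q$ and over $\chi\in\widehat{\Kcl(q)}$, are controlled by a large sieve inequality for the combined family of class group characters $\big\{\widehat{\Kcl(q)}:q\in\DF(Q)\big\}$ --- equivalently, for the Hecke eigenvalues of the attached dihedral weight-one forms of level $|q|$ and nebentypus $\chi_q$, of which there are $\asymp Q^{3/2}$. Alternatively one may use an explicit formula for each $L(s,\chi)$ together with a log-free zero-density estimate of Linnik strength for this family, bounding on average the number of zeros of $L(s,\chi)$ near $\Reel s=1$; for $|q|\le(\log X)^B$ one again substitutes the Siegel--Walfisz prime ideal theorem for $K$. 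Optimising the decomposition parameters against the large sieve, and balancing the two ranges of $|q|$, is what produces the stated range $Q^{20/3+\eps}\le X(\log X)^{-B}$.

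The main obstacle is the quantitative strength and uniformity of that large sieve / zero-density input. The complex class group characters form a thin family --- only $\asymp Q^{3/2}$ of them --- but one spread across $\asymp Q$ distinct ideal class groups, with attached degree-two $L$-functions of conductor $\asymp Q$; one needs a large sieve for their coefficients, or a log-free density estimate for their zeros, with fully explicit and sufficiently favourable dependence on $|q|$, uniform in $q$, in order to reach the exponent $20/3$. Intertwined with this is the treatment of possible Landau--Siegel zeros --- of the quadratic $L$-functions $L(s,\chi_q)$ and of the genus-character $L$-functions $L(s,\chi_{d_i})$ --- via the Deuring--Heilbronn repulsion phenomenon, whose correct insertion into the explicit formula over the whole family is the most delicate point; the complex $\chi$ themselves, being of $\GL_2$-type, should not carry exceptional zeros, but this has to be verified in the present setting. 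A secondary point is the bookkeeping in the form-to-ideal reduction, making sure that the main term emerges exactly as $\li(X)/(e(C)h(q))$ and that the ramified primes and the discriminants $\equiv 0\mods 8$ are correctly accounted for.
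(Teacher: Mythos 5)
Your reduction to the imaginary quadratic field $\Qpq q$, the use of the form-class/ideal-class bijection, the observation that $[\overline\pF]=[\pF]^{-1}$ produces exactly the weight $e(C)$, the orthogonality expansion over $\widehat{\Kcl(q)}$, and the split into genus (real) versus complex characters with the genus characters handled through Kronecker's factorization $L(s,\lambda_\chi)=L(s,\chi_{d_1})L(s,\chi_{d_2})$ and the classical Bombieri--Vinogradov theorem --- all of this matches the paper's Sections~\ref{sec:prelim_0}, \ref{sec:prelim_1} and~\ref{sec:realchar1}. But there is a genuine gap: the entire weight of the argument lands on the statement you label the ``main obstacle'', namely a large sieve (or log-free zero-density) estimate for the coefficients $\lambda_\chi(n)$ of the dihedral weight-one cusp forms attached to complex class group characters, \emph{uniform over the whole family of varying discriminants $q$ with $|q|\le Q$}, and you never supply it. This is not an off-the-shelf tool: a large sieve for a \emph{fixed} number field (such as Schumer's) is useless here because the discriminant varies, and the Murty--Murty / Murty--Petersen results concern the quite different family $K(\zeta_q)$. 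The paper's central new ingredient is exactly such an inequality (Lemma~\ref{lem:lsi_complex}),
\[
\sum_{\chi \in \Hhatc(Q)} \Big| \sum_{n \leq N} a_n \lambda_\chi(n)\Big|^2 \ll_\eps \bigl(N (\log N)^3+N^{1/2}(\log N)Q^{5/2+\eps}\bigr) \sum_{n \leq N} |a_n|^2,
\]
proved via Rankin--Selberg convolutions $L_{RS}(s;\chi_1,\chi_2)$ of the attached weight-one forms, Li's functional equation, a conductor bound of size $(q_1q_2)^2/(q_1,q_2)^{2-\eps}$, Phragm\'en--Lindel\"of, and the duality principle. Without proving something of this strength, the proposal establishes nothing; indeed the specific exponent $20/3$ comes out of optimizing parameters against precisely this inequality, so it cannot be recovered by gesturing at ``a large sieve for the combined family''.

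Two smaller points. First, your route to the first-moment bound differs from the paper's: you apply Cauchy--Schwarz in $q$ to reduce to a second-moment (Barban--Davenport--Halberstam-type) estimate, whereas the paper goes directly at the first moment through Bombieri's version of Gallagher's identity (the $G_z(1-LM_z)+F_z(1-LM_z)+L'M_z$ decomposition of $L'/L$), Mellin inversion, and an $L^2$-integral form of the large sieve (Corollary~\ref{corr:lsi_02}). Your route is not wasteful here, because unlike the classical Bombieri--Vinogradov theorem the target already carries a $Q^{1/2}$ factor; after splitting off exceptional discriminants (whose class numbers are too small) and the initial range $|q|\le(\log X)^{O_A(1)}$ (handled by Goldstein's and Blomer's Siegel--Walfisz theorems, exactly as the paper does), a direct application of Lemma~\ref{lem:lsi_complex} to the prime-indicator coefficients would make this work, so your structure is sound provided the large sieve is in hand. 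Second, the Deuring--Heilbronn repulsion phenomenon is not needed: the paper's treatment of Landau--Siegel zeros is much lighter, relying only on Landau's theorem that there are $O(\log Q)$ exceptional discriminants up to $Q$ (Proposition~\ref{prop:landau-siegel1}), which are then discarded by trivial estimates. And since the complex characters are handled by a convexity bound for $L(s,\lambda_\chi)$ rather than by zero counting, there is nothing to verify about their (non-)exceptional zeros in the paper's approach.
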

By giving up 
 control over the form classes, analogously to the Barban--Davenport--\linebreak Halberstam theorem, we may extend the range of the discriminants: 
\begin{theorem}\label{thm:bdh1}
   Let $A>0$ and $\eps>0$. Then 
	\begin{equation}\label{eq:bdh1}
	\sum_{q \in \DF(Q)} \sum_{C \in \Kcl(q)} \left(\pi(X;q,C)- \frac{\li (X)}{e(C) h(q)}\right)^2\ll_{A,\eps} Q^{1/2}X^2(\log X)^{-A}
	\end{equation}
	if $Q^{3 +\eps} \leq X(\log X)^{-2A-4}$.
\end{theorem}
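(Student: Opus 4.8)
The plan is to pass from binary quadratic forms to ideals of the imaginary quadratic field $K=\Qpq{q}$, apply orthogonality of the class group characters to convert the left-hand side into a mean square of twisted prime sums, and then treat that mean square by a large-sieve argument supplemented by a Siegel--Walfisz-type input for small discriminants. For a negative fundamental discriminant $q$ identify $\Kcl(q)$ with the ideal class group of $\Oc_K$; a form class $C$ then corresponds to an ideal class $\mathcal{C}$, the primes represented by $C$ are precisely the rational primes split in $K$ whose prime factors lie in $\mathcal{C}$ or $\mathcal{C}^{-1}$, and $e(C)$ simply records whether $\mathcal{C}=\mathcal{C}^{-1}$. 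Hence $e(C)\,\pi(X;q,C)$ equals the number of degree-one prime ideals of norm $\le X$ in $\mathcal{C}$, up to an error $O(\log|q|)$ coming from ramified and inert primes (whose contribution to the whole double sum is $\ll Q^{3/2}(\log Q)^{O(1)}$, harmless in the stated range since $Q\le X^{1/3}$). By partial summation, and after discarding the negligible prime-power and degree-two terms, it suffices to bound
\[ \sum_{q\in\DF(Q)}\ \sum_{\mathcal{C}}\ \Big(\ \sum_{\stacksum{\No\mathfrak{n}\le X}{[\mathfrak{n}]=\mathcal{C}}}\Lambda_K(\mathfrak{n})-\frac{X}{h(q)}\Big)^2, \]
where $\Lambda_K$ is the von Mangoldt function of $K$; a routine variant of the argument below, with a maximum over the summation length (or a smooth weight), transfers the bound back to $\pi(X;q,C)$.

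First I would apply Parseval over the classes $\mathcal{C}$: writing $\Psi(X,\psi)=\sum_{\No\mathfrak{n}\le X}\Lambda_K(\mathfrak{n})\psi(\mathfrak{n})$ for a character $\psi$ of $\Kcl(q)$, the inner sum becomes $h(q)^{-1}\big(|\Psi(X,\psi_0)-X|^2+\sum_{\psi\ne\psi_0}|\Psi(X,\psi)|^2\big)$, so one must show
\[ \sum_{q\in\DF(Q)}\frac{1}{h(q)}\Big(|\Psi(X,\psi_0)-X|^2+\sum_{\psi\ne\psi_0}|\Psi(X,\psi)|^2\Big)\ll_{A,\eps}Q^{1/2}X^2(\log X)^{-A}. \]
Here $L(s,\psi)$ is $\zeta(s)L(s,\chiq)$ when $\psi=\psi_0$, a product $L(s,\chi_{d_1})L(s,\chi_{d_2})$ of quadratic Dirichlet $L$-functions with $d_1d_2=q$ when $\psi$ is a genus character, and a dihedral $\GL(2)$ $L$-function otherwise; in every case the identity $L(s,\psi)^2=\zeta_K(s)L(s,\psi^2)$ and iteration show that a real zero of $L(s,\psi)$ near $1$ forces one of some quadratic $L(s,\chi_d)$ with $d\mid q$, so by the Landau--Page theorem there is at most one exceptional real character $\chi_{q_0}$ with $|q_0|\le Q$ to worry about.

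For the bulk of the discriminants, $|q|>(\log X)^{C}$ with $C=C(A)$ large, I would expand $\Lambda_K$ through a Vaughan- or Heath--Brown-type combinatorial identity into $O((\log X)^{O(1)})$ bilinear ideal sums $\sum_{\No\mathfrak{a}\asymp M}\sum_{\No\mathfrak{b}\asymp N}\alpha_{\mathfrak{a}}\beta_{\mathfrak{b}}\,\psi(\mathfrak{a}\mathfrak{b})$ with $MN\asymp X$ and the factors kept away from the extreme ranges, plus a short diagonal part bounded trivially. For each piece the quantity $h(q)^{-1}\sum_{\psi}|\cdots|^2$ unfolds, by orthogonality, into a count of quadruples of ideals of $\Oc_K$ constrained to lie in a common ideal class; summing over $q\in\DF(Q)$ and estimating the result via a large-sieve inequality for the family $\{L(s,\psi):q\in\DF(Q),\ \psi\bmod\Kcl(q)\}$ --- some $\asymp Q^{3/2}$ self-dual automorphic $L$-functions of analytic conductor $\asymp|q|$ attached to a one-parameter family of quadratic fields --- and balancing the Type-I and Type-II ranges against the resulting bound yields the estimate in the range $Q^{3+\eps}\le X(\log X)^{-B}$. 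This is a weaker constraint than the $Q^{20/3+\eps}$ of Theorem~\ref{thm:BV_01_PNT} precisely because only a second moment, not an $L^1$ average over the classes, is being bounded, so the Cauchy--Schwarz loss inherent in a Bombieri--Vinogradov-type argument is avoided.

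It remains to treat the small discriminants and the exceptional character, and this is the delicate point. For $|q|\le(\log X)^{C}$ a Siegel--Walfisz-type prime number theorem for forms --- de la Vall\'ee Poussin's theorem together with Siegel's ineffective bound --- gives $\Psi(X,\psi)-\delta_{\psi=\psi_0}X\ll_{C}X\exp(-c_C\sqrt{\log X})$ uniformly for $|q|\le(\log X)^{C}$ and all $\psi$, so those discriminants contribute $\ll X^2(\log X)^{O(1)}\exp(-c\sqrt{\log X})$. The feature absent from the classical Barban--Davenport--Halberstam theorem is that here the principal character already carries $\chiq$ via $L(s,\psi_0)=\zeta(s)L(s,\chiq)$, so a Siegel zero must be controlled in the large-$|q|$ range as well; I would do this either by folding the Deuring--Heilbronn repulsion into a log-free zero-density estimate for the family (handling the exceptional $L$-function in one stroke), or, treating $q_0$ by hand, by combining the near-trivial bound $\Psi(X,\psi)\ll X$, Siegel's lower bound $h(q)\gg_{\eps}|q|^{1/2-\eps}$, and a dyadic split over $|q|>(\log X)^{C}$, so that the power of $|q|$ in the denominator beats the $O(Q/|q_0|)$ discriminants $q\in\DF(Q)$ divisible by $q_0$. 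The main obstacle is the large-sieve (equivalently, log-free zero-density) estimate for this family of dihedral and reducible $L$-functions over varying imaginary quadratic fields, of the quality needed to reach the exponent $3$; everything else is a careful but routine adaptation of the classical machinery.
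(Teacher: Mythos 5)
Your high-level architecture matches the paper's: pass from form classes to ideal classes, use orthogonality of $\Hhat(q)$ to turn the class average into a mean square of character sums, treat $|q|\le(\log X)^{C}$ by a Siegel--Walfisz input (the paper uses Blomer's Lemma~3.1 in \cite{Blo04} for forms and the classical Siegel--Walfisz for progressions), handle exceptional characters by Landau's repulsion theorem, and reduce the real (genus) characters to Dirichlet characters via Kronecker factorization. However, two points need attention, and the second is a genuine gap.

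First, a smaller structural point: you propose expanding $\Lambda_K$ by a Vaughan- or Heath--Brown-type identity and balancing Type-I/Type-II ranges. For a Barban--Davenport--Halberstam (mean square) statement this detour is unnecessary and is not what the paper does. Because the quantity being bounded is already $\sum_q h(q)^{-1}\sum_{\psi\ne\psi_0}|\Psi(X,\psi)|^2$, one can apply the relevant large-sieve inequality directly to $\sum_{n\le X}g(n)\lambda_\psi(n)$ with $g=\mathbf{1}_{\text{prime}}$; no combinatorial decomposition of $\Lambda$ is needed, exactly as in the classical proof of the arithmetic-progression BDH theorem. The paper isolates this in the general Theorem~\ref{thm:bdh_general}, and the only structural subtlety (absent from the classical case) is the cross term $R(g,Q,X)$ arising from the genus characters $\lambda_\psi=\chi_{d_1}\ast\chi_{d_2}$, which vanishes when $g$ is supported on primes.

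Second, and crucially: you describe ``the large-sieve (equivalently, log-free zero-density) estimate for this family of dihedral and reducible $L$-functions over varying imaginary quadratic fields'' as the main obstacle --- and leave it unresolved. That estimate is in fact the key new technical ingredient of the paper, namely Lemma~\ref{lem:lsi_complex}, the large sieve inequality for complex ideal class group characters
\[
\sum_{\chi \in \Hhatc(Q)} \Big| \sum_{n \leq N} a_n \lambda_\chi(n)\Big|^2 \ll_\eps
\bigl(N (\log N)^3+N^{1/2}(\log N)Q^{5/2+\eps}\bigr) \sum_{n \leq N} |a_n|^2,
\]
which is proved by duality plus an estimate of the off-diagonal $S_N(\chi_1,\chi_2)$ via the analytic continuation and functional equation of the Rankin--Selberg $L$-function of the two weight-one forms, together with Li's conductor bound $c(\chi_1,\chi_2)\ll(q_1q_2)^2/(q_1,q_2)^{2-\eps}$ and the Phragm\'en--Lindel\"of convexity bound. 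The explicit exponent $Q^{5/2+\eps}$ in that lemma, combined with the lower class-number bound $h(q)\gg|q|^{1/2}/\log|q|$ for non-exceptional $q$, is exactly what produces the constraint $Q^{3+\eps}\le X(\log X)^{-2A-4}$ in the theorem; without it your proposed ``balancing'' has nothing to balance against, and the claimed exponent $3$ cannot be derived. So your proposal correctly locates the missing piece but does not supply it, and supplying it is essentially the content of the paper's Section~\ref{sec:largesieve}.
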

\noindent The representability of the primes is therefore well distributed over all (Theorem \ref{thm:BV_01_PNT}) or almost all (Theorem~\ref{thm:bdh1}) form classes to almost all negative fundamental discriminants $q \not\equiv 0 \mods 8$ in long ranges.

\begin{remarkf}\label{rem:BV_comparison_to_trivial}
How do these results compare to ``trivial'' estimates? There is no estimate for primes represented by a given binary quadratic form which is as trivial as the estimate\linebreak 
$\pi(X;q,a)\leq \frac{X}{q}+1$ for primes in arithmetic progressions, where the right-hand side of the inequality is simply the number of positive integers up to $X$ in the given arithmetic progression (or this number plus one).  
However, the number of 
 integers $n \leq X$ that can be represented by any 
 binary quadratic form of discriminant $q \in \DF(Q)$ is~$\ll \frac{X}{\sqrt{|q|}}$ and this 
 can be proved by an elementary lattice point counting argument, so this may therefore be considered as  
 a suitable substitute for a completely trivial bound. Moreover, it is known that the class number $h(q)$ has the lower bound 
\begin{equation}\label{eq:lowerclassnumberbound}
|q|^{1/2} (\log |q|)^{-1} \ll h(q)
\end{equation}
 if the primitive real Dirichlet character modulo $|q|$ is not exceptional (i.e., if the associated \mbox{$L$-function} does not have a Landau--Siegel zero), and $|q|^{1/2-\eps} \ll_\eps h(q)$  for all $\eps>0$ if  it is exceptional. Since exceptional discriminants are very rare (see Proposition \ref{prop:landau-siegel1}), it is reasonable to use \eqref{eq:lowerclassnumberbound} and to consider 
\begin{equation*}
O\big( Q^{1/2} (\log Q)X \big)
\end{equation*}
as a ``trivial'' upper bound for the sum on the left-hand side of \eqref{eq:my_bv_PNT}. We improve on this by an arbitrary power of $(\log X)$ in Theorem \ref{thm:BV_01_PNT},  just as in the original Bombieri--Vinogradov theorem. Similarly, Theorem~\ref{thm:bdh1} saves an arbitrary power of $(\log X)$ over the corresponding easy estimate for the left-hand side of \eqref{eq:bdh1}.
\end{remarkf}

\begin{remarkf} 
Analogues of the Bombieri--Vinogradov and Barban--Davenport--Halberstam theorems have been investigated in various contexts in the past, 
but we are not aware of any prior results of the type that we consider in Theorem \ref{thm:BV_01_PNT} and Theorem~\ref{thm:bdh1}. 
With regard to the well-known connection between classes of integral binary quadratic forms and ideal classes of quadratic fields that we will exploit, 
 it is important to note that many results of Bombieri--Vinogradov type have already been proved for number fields, e.g.\ by Wilson (1969), Huxley (1971), Fogels (1972), Johnson (1979) and Hinz (1988) (see the references given in \cite[\S 7.4.12]{Nar04}), but all these results have examined cases in which the number field is fixed; this is not useful in our case. The only results that have hitherto been proved for varying number fields are \cite{MM87} and the recent generalization \cite{MP13}; the fields in these works are of the form $K(\zeta_q)$ where $K$ is a fixed number field, $\zeta_q$ is a primitive $q$-th root of unity and $q$ varies. This case, which uses the large sieve inequality for Dirichlet characters, is also quite different to the situation of Theorem \ref{thm:BV_01_PNT}, which requires a large sieve inequality that takes ideal class group characters of various fields  simultaneously into account (see Lemma \ref{lem:lsi_complex}) and comes up with other subtle differences. 
\end{remarkf}

\begin{remarkf}
It seems that the condition $q \not\equiv 0 \mods 8$ can be dropped quite easily, but the proof then requires slightly more care when dealing with primitive real Dirichlet characters (since there are two such characters modulo $8$) and the application of the functional equation for Rankin--Selberg convolutions of holomorphic cusp forms in Section \ref{sec:largesieve} becomes more technical. Also, both theorems should be provable along similar lines for even more general negative discriminants, but many parts of the proof appear to become much more laborious if not intractable due the peculiarities that are linked to the square factors of non-fundamental discriminants, especially in the above-mentioned functional equation (consider, however, also Remark~\ref{rem:li-langlands}).
\end{remarkf}

\begin{remarkf}\label{rem:conditional_ranges}
Assuming the Lindel\"of Hypothesis (for Rankin--Selberg convolutions of holomorphic cusp forms of weight one), the exponent $\frac{20}{3}+\eps$ in Theorem \ref{thm:BV_01_PNT} may be replaced by $2+\eps$ and the exponent $3+\eps$ in Theorem~\ref{thm:bdh1} may be replaced by $1+\eps$.
\end{remarkf}

\begin{remarkf}
One reason for the comparatively short ranges that are, for now, admissible for the discriminants in our results (compared to the ranges of the moduli in the original theorems for arithmetic progressions) may be found  in the fact that the size of a form class group is much smaller than the corresponding discriminant. 
This offers therefore less potential for possible cancellation effects than in the case of arithmetic progressions where the number of reduced residue classes of a modulus is usually 
 only slightly smaller than  the modulus itself.
\end{remarkf}

In order to prove Theorem \ref{thm:BV_01_PNT} we will largely follow Gallagher's proof of the original Bombieri--Vinogradov theorem as presented by Bombieri in \cite[\S 7]{Bom87}. The key ingredients will be:
\begin{enumerate}
\item[(1)]
Dedekind's bijection between form classes and ideal classes in imaginary quadratic fields.
	\item[(2)]
	A new large sieve inequality for complex class group characters, which we prove via Rankin--Selberg convolutions of holomorphic cusp forms of weight one; see Sections \ref{sec:largesieve} and \ref{sec:complexchar1}.
	\item[(3)]
	The original Bombieri--Vinogradov theorem, which we use to estimate the contribution coming from real class group characters; see Section \ref{sec:realchar1}.
	\item[(4)]
	Landau's theorem on the scarcity of exceptional moduli, that is, the rarity of integers $q$ for which there could possibly exist a Dirichlet character modulo $q$ whose associated $L$-function has a Landau--Siegel zero; see Proposition \ref{prop:landau-siegel1}.
  \item[(5)]
	A result of Siegel--Walfisz type for ideal class group characters; see Proposition~\ref{prop:sw-goldstein}.
\end{enumerate}
The proof of Theorem~\ref{thm:bdh1} is similar, but the fifth ingredient above will be replaced by a direct appeal to Blomer's Siegel--Walfisz theorem for binary quadratic forms \cite{Blo04}.  
In fact, we prove in Theorem \ref{thm:bdh_general}  that general arithmetic functions exhibit an ``average behaviour'' with respect to the representability of integers by form classes -- for most form classes to most discriminants in long ranges -- if the functions satisfy Siegel--Walfisz conditions for both arithmetic progressions and form classes (and an additional technical condition).

\medskip
An easy application of these theorems yields upper bounds that ``usually'' hold for the size of the least prime represented by any  given positive definite binary quadratic form:
\begin{corollary}\label{cor:leastprime}
Let $\DF$ be the set of all negative fundamental discriminants $q \not \equiv 0 \mods 8$. For each $q \in \DF$
  and each form class ${C\in \Kcl(q)}$, let $p(q;C)$ denote  the least prime which is representable by all binary quadratic forms in~$C$.
	
1. For each $\eps>0$, 
the upper bound 
\begin{equation}\label{eq:linnik1}
\max_{C \in \Kcl(q)} p(q;C) \leq |q|^{\frac{20}{3}+\eps}
\end{equation}
may only fail for fundamental discriminants $q$ lying in a set $V=V(\eps) \subset \DF$ that has asymptotic density $0$ in $\DF$.

2. Moreover, for each $\eps > 0$, there exists a subset $S=S(\eps)$ of $\DF$
 such that $S$ has asymptotic density~$1$ in $\DF$, and
\begin{equation}\label{eq:linnik2}
\lim_{n \to \infty}\frac{|\{C \in \Kcl(q_n) \mid p(q_n;C) \leq |q_n|^{3+\eps}\}|}{h(q_n)}=1
\end{equation}
holds for each sequence $(q_n)$ in $S$ with $|q_n| \to \infty$ as $n \to \infty$.
\end{corollary}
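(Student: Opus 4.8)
The plan is to deduce both parts from Theorem~\ref{thm:BV_01_PNT} and Theorem~\ref{thm:bdh1} respectively, via the classical device that converts an average bound for $\pi(X;q,C)-\li(X)/(e(C)h(q))$ into a bound for the least prime, combined with a dyadic decomposition over $|q|$. Throughout I would use the elementary inputs $|\DF(Q)|\gg Q$, $\li(X)\gg X/\log X$, and the classical class number bound $h(q)\ll|q|^{1/2}\log|q|$.

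For Part~1, fix $\eps>0$ and a large fixed integer $A$ (any $A>2$ suffices). For each $Q=2^{j}$ put $X_Q=(Q/2)^{20/3+\eps}$ and apply Theorem~\ref{thm:BV_01_PNT} with its parameter $\eps$ replaced by $\eps/2$: since $X_Q(\log X_Q)^{-B(A)}\asymp Q^{20/3+\eps}(\log Q)^{-B(A)}\geq Q^{20/3+\eps/2}$ once $Q$ is large, the hypothesis of the theorem is met for all large $j$, giving $\sum_{q\in\DF(Q)}\max_{C}|\pi(X_Q;q,C)-\li(X_Q)/(e(C)h(q))|\ll_{A,\eps}Q^{1/2}X_Q(\log X_Q)^{-A}$. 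If some $q\in\DF$ with $|q|\in(Q/2,Q]$ admits a form class $C$ containing no prime $\leq X_Q$, then the corresponding summand is at least $\li(X_Q)/(2h(q))\gg X_Q/(Q^{1/2}(\log Q)^{2})$ (using $e(C)\leq 2$ and the bounds above), so the number of such $q$ in the block $(Q/2,Q]$ is $\ll Q(\log Q)^{2-A}=o(Q)$. Collecting these exceptional $q$ over all blocks $Q=2^{j}$ (and adjoining the finitely many small $q$ below the range where the theorem applies) produces a set $V(\eps)\subset\DF$; a routine summation over $j$ together with $|\DF(Q)|\gg Q$ shows $V(\eps)$ has asymptotic density $0$ in $\DF$. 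For every $q\notin V(\eps)$, say with $|q|\in(Q/2,Q]$, each class of $\Kcl(q)$ contains a prime $p\leq X_Q=(Q/2)^{20/3+\eps}\leq|q|^{20/3+\eps}$, which is precisely \eqref{eq:linnik1}.

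For Part~2 I would run the same scheme with Theorem~\ref{thm:bdh1}, squaring everything. Fix $\eps>0$ and a large fixed $A$ (say $A\geq 4$); for $Q=2^{j}$ set $X_Q=(Q/2)^{3+\eps}$ and apply Theorem~\ref{thm:bdh1} with parameter $\eps/2$, which for large $j$ gives $\sum_{q\in\DF(Q)}\sum_{C}(\pi(X_Q;q,C)-\li(X_Q)/(e(C)h(q)))^{2}\ll_{A,\eps}Q^{1/2}X_Q^{2}(\log X_Q)^{-A}$. Let $g(q)$ be the proportion of classes $C\in\Kcl(q)$ with $p(q;C)>|q|^{3+\eps}$; any such class has $\pi(X_Q;q,C)=0$ and so contributes at least $(\li(X_Q)/(2h(q)))^{2}$ to the inner sum, whence the $q$-summand is $\geq g(q)h(q)(\li(X_Q)/(2h(q)))^{2}\gg g(q)X_Q^{2}/(Q^{1/2}(\log Q)^{3})$. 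Comparing with the upper bound, but with a slowly decaying threshold $\eta_Q=(\log Q)^{-1/3}$ in place of a fixed one, shows that the number of $q$ in the block $(Q/2,Q]$ with $g(q)>\eta_Q$ is $\ll\eta_Q^{-1}Q(\log Q)^{3-A}=o(Q)$. Removing the union over all blocks of these exceptional $q$ — a density-$0$ set by the same summation as before — leaves a set $S=S(\eps)\subset\DF$ of density $1$ on which $g(q)\ll(\log|q|)^{-1/3}\to 0$; hence $|\{C\in\Kcl(q):p(q;C)\leq|q|^{3+\eps}\}|/h(q)=1-g(q)\to1$ along every sequence in $S$ tending to infinity, which is \eqref{eq:linnik2}.

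Neither part is conceptually hard once the two theorems are available; the work is essentially bookkeeping. The one point that genuinely needs care is the passage in Part~2 from the per-scale statement ``$g(q)$ is small outside a thin set'' to a statement valid along arbitrary sequences: one cannot simply intersect the density-$0$ exceptional sets obtained for each fixed threshold $1/k$, because the thresholds $1/k$ \emph{increase} with $k$ and the union of the corresponding exceptional sets need not have density $0$ — which is exactly why Part~2 is phrased via sequences rather than asserting $g(q)=0$ for almost all $q$. Letting $\eta_Q\to 0$ slowly, as above (equivalently, a diagonalization over the scales $Q=2^{j}$), resolves this. The remaining minor nuisances — that $|q|$ ranges over a dyadic window rather than being comparable to $Q$, and the need for the elementary inputs $|\DF(Q)|\gg Q$, $h(q)\ll|q|^{1/2}\log|q|$, and $\li(X)\gg X/\log X$ — are handled as indicated above.
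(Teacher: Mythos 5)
Your argument is correct and is exactly the classical Elliott--Halberstam device that the paper itself invokes (Section~\ref{sec:leastprime} simply cites \cite{EH71} rather than spelling it out): dyadic localization in $|q|$, applying Theorem~\ref{thm:BV_01_PNT} (resp.\ Theorem~\ref{thm:bdh1}) with $X$ chosen just above $|q|^{20/3+\eps}$ (resp.\ $|q|^{3+\eps}$), and using $\li(X)/h(q)\gg X/(|q|^{1/2}(\log X)^2)$ to convert the average bound into a count of exceptional discriminants. The one genuinely delicate point — that Part~2 must be stated along sequences and requires a slowly decaying threshold $\eta_Q$ rather than a naive intersection over fixed thresholds $1/k$ — is correctly identified and correctly resolved by your diagonalization.
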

These bounds give the first explicit exponents (although only ``on average'') for the bound $p(q;C)\ll |q|^L$ that is known to hold with some absolute constant $L$ for all negative fundamental discriminants $q$ and all form classes $C \in \Kcl(q)$. We will discuss in Section \ref{sec:leastprime} how the bound \eqref{eq:linnik1} could be potentially improved for the special forms of the shape $x^2+ny^2$ for at least almost all positive squarefree integers $n$. 

\medskip
\begin{ack}
This work 
is based on part of  the author's doctoral dissertation. 
He would like to thank his supervisor, Professor Emmanuel Kowalski, for suggesting the investigation of this problem and for much valuable advice.
\end{ack}


\section{Definitions and preliminaries on form class groups and ideal class groups}\label{sec:prelim_0}
We introduce in this section some basic definitions (some of which have already appeared in Section \ref{sec:intro} and will be repeated for convenience) and review certain properties concerning discriminants, form class groups and ideal class groups, which will be used in the subsequent sections. 

\smallskip
We will denote the set of all negative fundamental discriminants $q \not\equiv 0 \mods 8$ by $\DF$, i.e.\  
\begin{equation*}
\begin{aligned}
\DF=&\big\{d \in \Zz \mid d<0,\, d \equiv 1 \mods 4 \text{ and $d$ is squarefree}\big\} 
\\& 
\cup \big\{d \in \Zz \mid d<0,\, d \equiv 0 \mods 4 \text{ and ${\textstyle{\frac{d}{4}}} \equiv 3 \mods 4$ is squarefree}\big\}
\end{aligned}
\end{equation*}
and, for all $Q \geq 1$, we write $\DF(Q)$ for the set of all $q \in \DF$ with $|q|\leq Q$. 

\smallskip
Two binary quadratic forms $f$ and $g$ (which will always be assumed to be integral, primitive and positive definite in this paper) of discriminant $q \in \DF$ are called equivalent if there exists $\gamma \in \SL(2,\Zz)$ such that $f(x,y)=g(\gamma(x,y))$ for all $x,y \in \Zz$. Dirichlet defined a composition on the set of the resulting equivalence classes, which are called form classes; it turns the set into an abelian group, the form class group $\Kcl(q)$, whose cardinality, the class number $h(q)$, is known to be always finite. Equivalent forms represent the same numbers and we may therefore define the set   
\[
\Repr(q,C)=\big\{n \in \Zz \mid \forall f \in C \, \exists x,y \in \Zz:\, f(x,y)=n  \big\}
\] 
 for all $q \in \DF$ and all form classes $C\in \Kcl(q)$. See \S2 and \S3 in \cite{Cox97} for proofs and details. 

\smallskip
\noindent For each $q \in \DF$, we define: 
\begin{itemize}
\item
$\Oc(q)$, the ring of integers of $\Qq(\sqrt{q})$;
\item
$Z(q)$,  the set of 
non-zero integral $\Oc(q)$-ideals;
\item
$\No(\aF)$, the norm of the ideal $\aF \in Z(q)$, i.e.\ the size of the quotient ring $\Oc(q) /\aF$ (the dependence on $q$ is suppressed);
\item
$\Hcl(q)$,  the quotient of the group of invertible fractional $\Oc(q)$-ideals by the subgroup of principal fractional  $\Oc(q)$-ideals, i.e.\ the {ideal class group} 
of $\Oc(q)$;
\item
$\Hhat(q)$, the group of ideal class group characters $\chi:\ \Hcl(q) \to  \{z \in \Cc:\, |z|=1\}$; we write $\chiq_0$ for the trivial character and, overloading the notation, we define   
$\chi(\aF):=\chi(C)$ 
for all $\chi \in \Hhat(q)$ and all ideals $\aF \in Z(q)$, where $C \in \Hcl(q)$ is the ideal class of $\aF$.
\end{itemize}

\smallskip
\noindent Binary quadratic forms and ideal classes are linked through the following classical result: 
\begin{theorem}[Dedekind]\label{thm:cox77}
For every $q \in \DF$, there exists an isomorphism 
\[
B_q:\ \Kcl(q) \to \Hcl(q).
\]
In particular, we have
\[
h(q)=|\Kcl(q)|=|\Hcl(q)|.
\] 
Moreover, a positive integer $m$ is represented by the binary quadratic forms in the class $C \in~\Kcl(q)$  if and only if there exists an ideal $\aF \in B_q(C)$ such that $\No(\aF)=m$. 
\end{theorem}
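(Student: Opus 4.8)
This is a classical theorem of Dedekind, and the plan is to reproduce (a sketch of) the argument in \cite{Cox97}. Throughout write $K=\Qpq{q}$; since $q$ is a fundamental discriminant, $\Oc(q)$ is the full ring of integers of $K$, so every nonzero fractional $\Oc(q)$-ideal is invertible. Fix the complex embedding of $K$ with $\Imag\sqrt{q}>0$. To a primitive positive definite form $f(x,y)=ax^2+bxy+cy^2$ of discriminant $q=b^2-4ac$ I associate the $\Zz$-lattice
\[
\aF_f \;=\; a\,\Zz \;+\; \tfrac{-b+\sqrt{q}}{2}\,\Zz\ \subset\ K ,
\]
and conversely, to a nonzero fractional $\Oc(q)$-ideal $\aF$ with $\Zz$-basis $(\alpha,\beta)$ ordered so that $\Imag(\beta/\alpha)>0$, the form
\[
f_{\aF}(x,y)\;=\;\frac{\No(\alpha x+\beta y)}{\No(\aF)} .
\]
A direct check --- using that $\omega:=\tfrac{-b+\sqrt{q}}{2}$ satisfies $\omega^2+b\omega+ac=0$ and that $b\equiv q\pmod 2$ --- shows $\aF_f$ is an integral $\Oc(q)$-ideal with $\No(\aF_f)=a$, primitive because $\gcd(a,b,c)=1$, with the basis $(a,\omega)$ positively oriented since $a>0$; and one checks similarly that $f_{\aF}$ is integral, primitive and positive definite of discriminant $q$.

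Next I would establish that these constructions descend to a bijection of classes. Applying $\gamma\in\SL(2,\Zz)$ to $f$ is an oriented change of $\Zz$-basis of $\aF_f$, so equivalent forms give ideals in the same class, while $f_{\lambda\aF}=f_{\aF}$ for $\lambda\in K^\times$; checking that the induced maps on classes are mutually inverse, once the orientation conventions are matched up, yields a bijection
\[
B_q:\ \Kcl(q)\ \longrightarrow\ \Hcl(q),
\]
and in particular $h(q)=|\Kcl(q)|=|\Hcl(q)|$. Since $K$ is imaginary quadratic, ideal norms and values of positive definite forms are automatically positive, so the narrow and ordinary ideal class groups coincide and no further sign bookkeeping is needed.

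To see that $B_q$ is a group homomorphism I would use the fact, built into Dirichlet's construction, that any two form classes of discriminant $q$ contain \emph{concordant} representatives $f_1=a_1x^2+bxy+a_2c\,y^2$ and $f_2=a_2x^2+bxy+a_1c\,y^2$ with common middle coefficient $b$, $\gcd(a_1,a_2,b)=1$, and Dirichlet composite $f_1*f_2=a_1a_2x^2+bxy+c\,y^2$. With $\omega=\tfrac{-b+\sqrt{q}}{2}$ (so $\omega^2+b\omega+a_1a_2c=0$) the product of the corresponding ideals is
\[
\aF_{f_1}\,\aF_{f_2}=\bigl(a_1\Zz+\omega\Zz\bigr)\bigl(a_2\Zz+\omega\Zz\bigr)=a_1a_2\,\Zz+\gcd(a_1,a_2,b)\,\omega\Zz=a_1a_2\,\Zz+\omega\Zz=\aF_{f_1*f_2},
\]
the $\omega$-component collapsing exactly because of the coprimality condition; hence $B_q$ transports Dirichlet composition to ideal multiplication and, being bijective, is an isomorphism. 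I expect this to be the main obstacle: reconciling Dirichlet's composition law, whose definition via concordant (``united'') forms is notoriously opaque, with ideal multiplication requires proving the existence of such representatives in every pair of classes and doing the ideal-product computation with some care --- whereas the remaining steps are routine linear algebra over $\Zz$ plus bookkeeping with orientations and norms.

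Finally, the representation statement would follow from the identity
\[
\No\!\bigl(ax+\tfrac{-b+\sqrt{q}}{2}\,y\bigr)\;=\;a\bigl(ax^2-bxy+cy^2\bigr)
\]
combined with $\No(\aF_f)=a$ and multiplicativity of $\No$: since $ax^2-bxy+cy^2$ and $f(x,y)$ represent the same integers, a positive integer $m$ is represented by $f$ if and only if there is $\mu\in\aF_f$ with $\No(\mu)=am=m\,\No(\aF_f)$, i.e.\ if and only if the integral ideal $\mu\,\aF_f^{-1}$, which then has norm $m$, runs through the integral ideals of norm $m$ in a single ideal class attached to $[\aF_f]$. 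Choosing the normalisation so that this class is precisely $B_q(C)$ for $f\in C$ --- equivalently, post-composing $B_q$ with inversion on $\Hcl(q)$ if necessary, which is still an isomorphism --- gives the stated equivalence. Since the excerpt only uses the statement, one could of course instead simply cite \cite[\S2 and \S3]{Cox97}.
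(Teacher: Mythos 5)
The paper itself offers no proof here; it simply cites \cite[Theorem 7.7]{Cox97}, and your sketch is a faithful reproduction of exactly that argument, so it is ``essentially the same approach.'' The details you supply are correct: $\omega=\tfrac{-b+\sqrt q}{2}$ does satisfy $\omega^2+b\omega+ac=0$, $a\Zz+\omega\Zz$ is a primitive $\Oc(q)$-ideal of norm $a$ with positively oriented basis because $a>0$, the concordant-forms computation $(a_1\Zz+\omega\Zz)(a_2\Zz+\omega\Zz)=a_1a_2\Zz+\gcd(a_1,a_2,b)\omega\Zz$ is right, and you correctly flag the orientation/inversion normalisation (Cox actually uses $N(\alpha x-\beta y)/N(\aF)$, which absorbs the $f(x,-y)$ sign that your $N(\alpha x+\beta y)$ convention produces) as well as the existence of concordant representatives as the genuinely nontrivial step in reconciling Dirichlet composition with ideal multiplication.
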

\noindent A proof can be found in \cite[Theorem 7.7]{Cox97}, for example. 

\smallskip
For all (positive or negative) fundamental discriminants $q \not\equiv 0 \mods 8$, let $\chi_q$ denote 
 the unique primitive real Dirichlet character modulo $|q|$  
(there are two primitive real Dirichlet characters if $q \equiv 0 \mods 8)$; it is given by the Kronecker symbol~$(\frac{q}{\cdot})$ (see \cite[\S 3.5]{IK04}).  
 For each rational prime $p$, the number of solutions $m \mods p$ to $m^2 \equiv q \mods p$ equals $1+\chi_q(p)$ and one can easily show (see \cite[Proposition 5.16]{Cox97}, for example): 
\begin{itemize}
\item
If $\chi_q(p)=0$, i.e.\ if $p$ divides $q$, then $p$ ramifies in $\Oc(q)$, i.e.\ $p\Oc(q)=\pF^2$ for some prime ideal $\pF$ of $\Oc(q)$ and $\No(\pF)=p$;
\item
if $\chi_q(p)=1$, then $p$ splits in $\Oc(q)$, i.e.\ $p\Oc(q)=\pF_1\pF_2$ for two distinct prime ideals $\pF_1,\pF_2$ of $\Oc(q)$ and $\No(\pF_1)=\No(\pF_2)=p$;
\item
if $\chi_q(p)=-1$, then $p$ remains prime in $\Oc(q)$, i.e.\ $p\Oc(q)=\pF$ is a prime ideal in $\Oc(q)$ and $\No(\pF)=p^2$.
\end{itemize}
It follows with Theorem \ref{thm:cox77} that, if $n=p^\ell$ for a prime $p$ and a positive integer $\ell$ and if $n$ can be represented by the forms in the class $C \in \Kcl(q)$, then 
\begin{equation}\label{eq:weight_w-Cn_for_primepowers}
w(C,n):=\sum_\stacksum{\aF \in {B}_q(C)}{\No(\aF) =n} 1 = 
\begin{cases}
\ell +1 \quad &\text{if $\chi_q(p)=1$ and $\ord(C) \leq 2$ in $\Kcl(q)$},\\
1 \quad &\text{if $\chi_q(p)=1$ and $\ord(C) > 2$ in $\Kcl(q)$},\\
1 \quad &\text{if $\chi_q(p)=0$},\\
1 \quad &\text{if $\chi_q(p)=-1$ and $\ell$ is even}.
\end{cases}
\end{equation}
Only a small set of primes ramifies in $\Oc(q)$. Thus, if the number $w(C,p)$ is positive, it will usually be given by one of the first two cases in \eqref{eq:weight_w-Cn_for_primepowers}. For further use, we therefore put 
\begin{equation}\label{eq:definition_of_eK}
e(C)=
\begin{cases}
2 \quad &\text{if $\ord(C) \leq 2$ in $\Kcl(q)$},\\
1 \quad &\text{if $\ord(C) > 2$ in $\Kcl(q)$}.
\end{cases}
\end{equation}
For all $q \in \DF$ and all $X \geq 1$, we thus have
\begin{equation}\label{eq:weight_w-Cn_on_average}
\sum_{C \in \Kcl(q)}\sum_\stacksum{p\leq X}{p \in \Repr(q,C)} w(C,p)=
\sum_{C \in \Kcl(q)}e(C)\sum_\stacksum{p\leq X}{p \in \Repr(q,C)} 1 - \sum_\stacksum{p\leq X}{p \mid q} 1=
 \sum_{p \leq X} (1+ \chi_q(p)).
\end{equation}


\section{A large sieve inequality for complex ideal class group characters}\label{sec:largesieve}

The theorems of Bombieri--Vinogradov and Barban--Davenport--Halberstam are built on the following large sieve inequality for Dirichlet characters (see \cite[Theorem 7.13]{IK04}, for example):
\begin{lemma}[Large sieve inequality for Dirichlet characters]\label{lem:lsi_org}
For any positive integers $Q$ and $N$ and any complex numbers $(a_n)_{n \leq N}$, we have
\[
\sum_{q \leq Q}\  \sideset{}{^*}\sum_{\chi \mods q} \Big|\sum_{n \leq N} a_n \chi(n)\Big|^2 \leq (Q^2+N) \sum_{n \leq N} |a_n|^2,
\]
where $\sideset{}{^*}\sum $ means that the sum is taken over primitive Dirichlet characters only.
\end{lemma}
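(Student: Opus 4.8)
The plan is to reduce the statement to the analytic (additive) large sieve inequality — the estimate
\[
\sum_{r}\Big|\sum_{n\leq N} a_n\, e(n x_r)\Big|^2 \leq (N+\delta^{-1})\sum_{n\leq N}|a_n|^2
\]
valid for any real numbers $x_1,\dots,x_R$ that are pairwise $\delta$-separated modulo $1$ (here and below $e(t)=e^{2\pi i t}$) — by the classical device of Gauss sums.

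First I would record the two facts about a primitive Dirichlet character $\chi$ modulo $q$ that are needed: its Gauss sum $\tau(\chi)=\sum_{a=1}^{q}\chi(a)\,e(a/q)$ satisfies $|\tau(\chi)|^2=q$, and the separation identity $\bar\chi(n)\,\tau(\chi)=\sum_{a=1}^{q}\chi(a)\,e(an/q)$ holds for \emph{every} integer $n$. For $(n,q)=1$ the latter is an immediate change of variables $a\mapsto an^{-1}$; for $(n,q)>1$ both sides vanish, and it is precisely here that primitivity of $\chi$ is used. Writing $S(\alpha)=\sum_{n\leq N} a_n\,e(n\alpha)$, I would multiply the separation identity by $a_n$, sum over $n$, take absolute squares and invoke $|\tau(\chi)|^2=q$ to get
\[
q\,\Big|\sum_{n\leq N} a_n\bar\chi(n)\Big|^2=\Big|\sum_{a=1}^{q}\chi(a)\,S(a/q)\Big|^2 \qquad(\chi \text{ primitive modulo } q).
\]

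Next I would sum this over all primitive characters modulo $q$, enlarge the right-hand side to a sum over \emph{all} characters modulo $q$, and apply orthogonality: $\sum_{\chi\mods q}\chi(a)\bar\chi(b)=\varphi(q)$ when $(ab,q)=1$ and $a\equiv b\mods q$, and vanishes otherwise. This collapses $\sum_{\chi\mods q}|\sum_{a}\chi(a)S(a/q)|^2$ to $\varphi(q)\sum_{1\leq a\leq q,\,(a,q)=1}|S(a/q)|^2$; dividing by $q$, using $\varphi(q)\leq q$, and using that $\chi\mapsto\bar\chi$ permutes the primitive characters, I obtain
\[
\sideset{}{^*}\sum_{\chi\mods q}\Big|\sum_{n\leq N} a_n\chi(n)\Big|^2 \leq \sum_\stacksum{1 \leq a \leq q}{(a,q)=1}\Big|S(a/q)\Big|^2.
\]
Summing over $q\leq Q$, the points $a/q$ occurring on the right are the Farey fractions of order $Q$, and two distinct such fractions satisfy $|a/q-a'/q'|=|aq'-a'q|/(qq')\geq 1/Q^2$, so they are $Q^{-2}$-separated modulo $1$. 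Feeding this into the additive large sieve with $\delta=Q^{-2}$ gives $\sum_{q\leq Q}\sum_{(a,q)=1}|S(a/q)|^2\leq (N+Q^2)\sum_{n\leq N}|a_n|^2$, which is the asserted bound.

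The only genuinely analytic ingredient is the additive large sieve inequality with its clean constant $N+\delta^{-1}$; everything else is bookkeeping with Gauss sums and character orthogonality. The main obstacle — the one step that cannot be shortcut — is therefore the proof of that additive inequality, for which I would cite the sharp form due to Montgomery--Vaughan and to Selberg (their constant $N-1+\delta^{-1}$ is slightly better than what is needed here); if one prefers a self-contained argument, a version with constant $N+\delta^{-1}$ follows quickly from the duality principle together with a Fej\'er-kernel majorant for the interval $[-\delta/2,\delta/2]$.
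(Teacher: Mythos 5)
Your proof is correct and is the standard Gauss-sum reduction to the analytic (additive) large sieve, which is exactly how \cite[Theorem 7.13]{IK04} --- the source the paper cites for this lemma without reproving it --- obtains the multiplicative form. Since the paper itself offers no proof of this lemma, there is nothing further to compare; your argument matches the cited reference.
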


Due to the close relationship between the form class group $\Kcl(q)$ and the ideal class group $\Hcl(q)$ (of the imaginary quadratic field $\Qq(\sqrt{q})$) for each discriminant $q$ (see 
Section~\ref{sec:prelim_0}), 
the ideal class group characters 
$\chi \in \Hhat(q)$ 
play a similar role in the study of primes represented by binary quadratic forms as Dirichlet characters do in the study of primes in arithmetic progressions. 

Real class group characters arise from Dirichlet convolutions of real Dirichlet characters (compare Section~\ref{sec:realchar1}) and can be handled by means of Lemma \ref{lem:lsi_org}. 
 Since this is not the case for complex class group characters, the following large sieve inequality for such characters will be essential in the proofs of Theorems \ref{thm:BV_01_PNT} and \ref{thm:bdh1}: 
\begin{lemma}[Large sieve inequality for complex ideal class group characters]\label{lem:lsi_complex}
For each $Q \geq 1$, let $\DF(Q)$ be the set of all negative fundamental discriminants $q \not \equiv 0 \mods 8$  with $|q| \leq Q$. Set $\Hhatc(q)=\{\chi \in \Hhat(q) \mid \chi^2 \neq \chiq_0\}$ for all $q \in \DF(Q)$
and 
\[
\Hhatc(Q)=\bigcup_{q \in \DF(Q)} \Hhatc(q).
\] 
For each $q \in \DF(Q)$, each $\chi \in \Hhat_1(q)$ and each positive integer $n$, we set 
\begin{equation}\label{eq:def_lambda}
\lambda_\chi(n)=\sum_\stacksum{\aF \in Z(q)}{\No(\aF)=n} \chi(\aF).
\end{equation} 
Then
\begin{equation}\label{eq:lsi_complex}
\sum_{\chi \in \Hhatc(Q)} \Big| \sum_{n \leq N} a_n \lambda_\chi(n)\Big|^2 \ll_\eps 
\left(N (\log N)^3+N^{1/2}(\log N)Q^{5/2+\eps}\right) \sum_{n \leq N} |a_n|^2
\end{equation}
for all complex numbers $(a_n)_{n \leq N}$ and all $\eps>0$, $Q \geq 1$ and $N \geq 3$. 
\end{lemma}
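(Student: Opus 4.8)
The plan is to realize the class group character sum as a Dirichlet series coefficient sum attached to automorphic objects and then invoke a mean-value (large sieve type) estimate coming from the spectral/analytic theory of those objects. Concretely, for $q \in \DF(Q)$ and $\chi \in \Hhatc(q)$, the Hecke $L$-function $L(s,\chi) = \sum_n \lambda_\chi(n) n^{-s}$ is, by a theorem of Hecke, the $L$-function of a holomorphic cusp form $f_\chi$ of weight one on $\Gamma_0(|q|)$ with nebentypus $\chi_q$; the condition $\chi^2 \neq \chiq_0$ guarantees that $f_\chi$ is a genuine (non-dihedral-from-a-real-character, in particular cuspidal) newform and that distinct $\chi$ (up to $\chi \leftrightarrow \bar\chi$) give distinct forms. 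So $\sum_{\chi \in \Hhatc(Q)}|\sum_{n\le N} a_n\lambda_\chi(n)|^2$ is a sum, over a family of weight-one newforms of varying level up to $Q$, of squared linear forms in their Hecke eigenvalues. First I would set this family up carefully, accounting for the two-to-one identification, for the forms of level a proper divisor of $|q|$ (oldforms), and for the multiplicity of a given $f_\chi$ across different $q$ (which is bounded).

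Next I would apply a duality/$TT^*$ argument: bounding the bilinear form in \eqref{eq:lsi_complex} is equivalent to bounding $\sum_{m,n \le N} a_m \bar a_n \sum_{\chi} \lambda_\chi(m)\overline{\lambda_\chi(n)}$, so after Cauchy--Schwarz it suffices to estimate the diagonal sums $D(m,n) := \sum_{\chi \in \Hhatc(Q)} \lambda_\chi(m)\overline{\lambda_\chi(n)}$. The key input is that $\sum_\chi \lambda_\chi(m)\overline{\lambda_\chi(n)}$, summed over the family, is governed by the Rankin--Selberg convolution $L(s, f_\chi \times \bar f_\chi)$ (equivalently $L(s,f_\chi\times f_{\bar\chi})$, since $f_\chi$ has real-analytic Fourier coefficients up to the nebentypus). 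The plan is: extract the main term from the ``$m=n$'' (or, more precisely, $m,n$ in the same square class) contribution using the pole of the Rankin--Selberg $L$-function at $s=1$ — this produces the $N(\log N)^3$ term, where the cube of the logarithm reflects that a weight-one form is (conjecturally and, for these CM forms, provably) an automorphic induction from $\Qq(\sqrt q)$, so $L(s,f_\chi \times \bar f_\chi)$ factors as $\zeta(s)L(s,\chi^2)L(s,\chi_q) \cdot(\ldots)$-type product of degree up to $4$ over $\Qq$; handling the cross term requires opening $\lambda_\chi(m)$ via the convolution $\lambda_\chi(n) = \sum_{\aF,\, \No\aF = n}\chi(\aF)$ and using orthogonality of characters of $\Hcl(q)$ together with summation over $q$. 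The off-diagonal ($m\ne n$, different square classes) terms are then bounded using the functional equation of the Rankin--Selberg convolution and a convexity (or the available subconvexity) bound, shifting contours; it is here that the $q$-aspect of the conductor enters and produces the $N^{1/2}(\log N) Q^{5/2+\eps}$ term. The exponent $5/2$ should emerge as follows: the Rankin--Selberg conductor in the $q$-aspect is $|q|^{2}$, its square root $|q|$ is the ``convexity'' size, and summing over $q \le Q$ and over the $O(N)$ off-diagonal pairs with the length $N^{1/2}$ coming from the functional equation gives $N^{1/2} Q \cdot Q^{1/2+\eps}$ after trivially estimating the number of forms of each level as $\ll |q|^{1/2+\eps}$.

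I expect the main obstacle to be the treatment of the weight-one case of the Rankin--Selberg functional equation and the uniformity in the level $q$. Weight one is delicate because the archimedean factor is different from the higher-weight case, the forms are not obtained from the trace formula in the usual Petersson/Kuznetsov way, and the Deligne bound $|\lambda_\chi(p)| \le 2$ must be invoked in its Hecke (not Deligne) form. I would set up the functional equation for $L(s, f_\chi \times \overline{f_\chi})$ directly from the factorization $L(s,f_\chi \times \overline{f_\chi}) = \zeta(s)\, L(s,\chi_q)\, L(s,\chi^2)\, L(s,\chi^2\chi_q)$ over $\Qq$ (valid precisely because $f_\chi$ is dihedral, induced from the Hecke character $\chi$ of $\Qq(\sqrt q)$), which reduces everything to well-understood Dirichlet/Hecke $L$-functions; the condition $q \not\equiv 0 \mods 8$ keeps $\chi_q$ primitive modulo $|q|$ and simplifies the conductor bookkeeping, as the third Remark in the introduction anticipates. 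The second subtle point is bounding, uniformly in $q$, the number of characters $\chi \in \Hhatc(q)$, i.e. $h(q) \ll |q|^{1/2+\eps}$, and controlling the overlap $\Hhatc(q) \cap \Hhatc(q')$ — both are routine but must be done with care to avoid losing powers of $\log$ or $Q^\eps$ that would spoil the stated ranges.
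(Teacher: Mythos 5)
You correctly identify the central input: $\chi\mapsto f_\chi$ sends complex class group characters to weight-one newforms of level $|q|$ and nebentypus $\chi_q$, and the resolution of the Rankin--Selberg convolution (Li's functional equation, convexity, and uniformity in the level, facilitated by $q\not\equiv 0\bmod 8$) is exactly what controls the cross terms. However, your diagonal/off-diagonal split is set up in the wrong variable, and this is a genuine gap, not a notational slip.

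You propose to open the square in $n$ and handle $\sum_{m,n}a_m\bar a_n D(m,n)$ with $D(m,n)=\sum_{\chi\in\Hhatc(Q)}\lambda_\chi(m)\overline{\lambda_\chi(n)}$, isolating $m=n$ (or $m,n$ in the same square class) as the ``main term.'' This fails numerically: the $m=n$ contribution is $\sum_n|a_n|^2 D(n,n)$ and $D(n,n)=\sum_{\chi\in\Hhatc(Q)}|\lambda_\chi(n)|^2$ already involves all $\asymp Q^{3/2}$ characters, so the crude bound is $\ll Q^{3/2}(\log Q)\,\tau(n)^2$ per $n$ --- a factor $\asymp Q^{3/2}$ too large to yield the $N(\log N)^3\sum|a_n|^2$ term. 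More importantly, the Rankin--Selberg $L$-function $L(s,f_{\chi_1}\times\overline{f_{\chi_2}})$ is a function of a pair of \emph{characters}, not a pair of \emph{integers}: its pole at $s=1$ occurs when $\chi_1=\chi_2$, and its entirety and functional equation occur when $\chi_1\ne\chi_2$. Neither statement is indexed by $m,n$, so there is no way for contour-shifting to distinguish $m=n$ from $m\ne n$ in the way you describe. What is needed first is the \emph{duality principle}: bound instead $\sum_{n\le N}\bigl|\sum_{\chi\in\Hhatc(Q)}b_\chi\lambda_\chi(n)\bigr|^2$, open the square in $\chi$, and split into $\chi_1=\chi_2$ (handled by the elementary bound $|\lambda_\chi(n)|\le\tau(n)$ and $\sum_{n\le N}\tau(n)^2\ll N(\log N)^3$ --- no pole or RS machinery needed here) versus $\chi_1\ne\chi_2$ (handled by a smooth cutoff, Mellin inversion, entirety of $L_{RS}(s;\chi_1,\chi_2)=L(2s,\chi_{q_1}\chi_{q_2})L(s;\chi_1,\chi_2)$, contour shift to $\Reel(s)=\tfrac12+1/\log N$, and the convexity bound $\ll_\eps(q_1q_2(1+|t|)^2)^{1/2+\eps}$, yielding $S_N(\chi_1,\chi_2)\ll_\eps N^{1/2}(\log N)Q^{1+\eps}$). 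Multiplying by $|\Hhatc(Q)|\ll Q^{3/2}\log Q$ then produces $N^{1/2}(\log N)Q^{5/2+\eps}$. Your conductor bookkeeping for $Q^{5/2}$ also does not reconstruct cleanly ($Q^{1+\eps}$ from convexity of one pair times $Q^{3/2+\eps}$ for the character count is the correct split), and your diagonal factorization $\zeta(s)L(s,\chi_q)L(s,\chi^2)L(s,\chi^2\chi_q)$, while correct, only covers the $\chi_1=\chi_2$ case --- the critical off-diagonal object $L(s,f_{\chi_1}\times\overline{f_{\chi_2}})$ with $\chi_1\ne\chi_2$ and possibly $q_1\ne q_2$ has no such reduction to classical Dirichlet $L$-functions.
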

The proof will in essence 
follow the proof of a similar mean-value estimate for automorphic representations by Duke and Kowalski \cite[Theorem 4]{DK00}. Apart from standard techniques that are often used in proofs of large sieve inequalities (like the duality principle), Rankin--Selberg theory is a key ingredient here. In contrast to the result in \cite{DK00}, which depends on (deep) facts from the theory of automorphic representations, we may use ``classical'' results about holomorphic cusp forms by appealing to Li's functional equation for $L$-functions that are associated to Rankin--Selberg convolutions of holomorphic cusp forms \cite{Li79}. This functional equation is quite complicated to use in its general form, but rather simple in our case of fundamental discriminants $q \not\equiv 0 \mods 8$.

\begin{remf}
There exist other large sieve inequalities for algebraic number fields. For instance, Schumer's \cite{Sch86} general  inequality with  explicit dependence of the constants on the parameters of the underlying \emph{fixed} field yields 
\begin{equation*}
\sum_{\chi \in \Hhatc(q)} \bigg| \sum_\stacksum{\aF \in Z(q)}{\No(\aF) \leq N} c(\aF) \lambda_\chi(n)\bigg|^2 \ll 
(\log |q|)\big(|q|+ |qN|^{1/2} +N\big) \sum_\stacksum{\aF \in Z(q)}{\No(\aF) \leq N} |c(\aF)|^2
\end{equation*}
for any fixed $q \in \DF$ and any function $c$ on $Z(q)$. However, the mean-value results of the next sections consider situations where the underlying number fields vary and therefore also require a large sieve inequality which has an extra averaging over the discriminant. To our knowledge, Lemma \ref{lem:lsi_complex} is the first large sieve inequality for \emph{varying} number fields.
\end{remf}

\begin{proof}[Proof of Lemma \ref{lem:lsi_complex}]
Let $\phi$ be a smooth majorant of the characteristic function of the\linebreak interval~$[0,N]$, i.e.\ a positive $C^\infty$ function on $[0,+\infty)$ with compact support, $0 \leq \phi \leq 1$ and $\phi(n)=1$ for $n \leq N$. 
For all $\chi_1 \in \Hhatc(q_1)$, $\chi_2 \in \Hhatc(q_2)$ with $q_1, q_2 \in \DF(Q)$, let $\chi_{1,2}$ be the product of the (unique) primitive real Dirichlet characters modulo $|q_1|$ and $|q_2|$; $\chi_{1,2}$ is therefore a real Dirichlet character modulo the least common multiple of $q_1$ and $q_2$. Set
\begin{align*}
S_N(\chi_1,\chi_2)&=\sum_{n \geq 1} \lambda_{\chi_1}(n) \overline{\lambda_{\chi_2}(n)} \phi(n/N),\\
L(s;\chi_1,\chi_2)&=\sum_{n \geq 1} \lambda_{\chi_1}(n) \overline{\lambda_{\chi_2}(n)} n^{-s},\\
L_{RS}(s;\chi_1,\chi_2)&=L(2s,\chi_{1,2}) L(s;\chi_1,\chi_2).
\end{align*}
The first $L$-function is the ``na\"ive'' convolution $L$-series of $\lambda_{\chi_1}(n)$ and $\overline{\lambda_{\chi_2}(n)}$, the second one is known as the \emph{Rankin--Selberg convolution} $L$-function.  
By the Mellin inversion theorem, we have
\begin{equation}\label{eq:mellin_SN}
S_N(\chi_1,\chi_2)=\frac{1}{2\pi i} \int_{(2)} N^s \widehat\phi(s) L(s;\chi_1,\chi_2) \,ds=\frac{1}{2\pi i} \int_{(2)} N^s \widehat\phi(s) \frac{L_{RS}(s;\chi_1,\chi_2)}{L(2s,\chi_{1,2})} \,ds,
\end{equation}
where 
\[
\widehat{\phi}(s)=\int_{0}^{+\infty} \phi(x) x^{s-1}\, dx
\]
denotes the Mellin transform of $\phi$ (see \cite[\S2.3]{Kow04} for this and the following basic properties of smooth cutoff functions and Mellin transforms). We would like to shift the line of integration on the right-hand side of \eqref{eq:mellin_SN} as far to the left as possible. Herefore, we need to know the growth behaviour of the functions in this integral: By the choice of $\phi$, its Mellin transform $\widehat{\phi}$ decays faster than any polynomial in all vertical strips of the complex plane. Furthermore, we have
\begin{equation}\label{eq:llll}
\frac{1}{L(2(\sigma+it),\chi_{1,2})}  \ll \zeta(2\sigma+2it) \ll \frac{1}{2\sigma-1}
\end{equation}
uniformly in $t \in \Rr$ if $\sigma >\demi$. 
As for the Rankin-Selberg $L$-function $L_{RS}(s;\chi_1,\chi_2)$, we consider the functions 
\begin{equation*}
f_j(z)=\sum_{n \geq 1} \lambda_{\chi_j}(n)e^{2 \pi i z n}\qquad (j=1,2)
\end{equation*}
 on the complex upper half plane. Since the involved class group characters $\chi_j$ are not real, we know (see \cite[\S14.3]{IK04}, for example) that the functions $f_j$ are normalized primitive holomorphic cusp forms of weight one, level $q_j$ and nebentypus $\chi_{q_j}$, the primitive real Dirichlet character modulo $|q_j|$. Therefore we also know from classical Rankin--Selberg theory (see \cite[Theorem 3.1]{Li79}) that $L_{RS}(s;\chi_1,\chi_2)$ is an entire function if $f_1 \neq f_2$ or, equivalently, if $\chi_1 \neq \chi_2$. 
In this case, it is therefore possible to shift the line of integration to $\Reel(s)=\demi+\alpha$ with $\alpha=(\log N)^{-1}$. Thus,
\[
S_N(\chi_1,\chi_2) \ll  \int_{(1/2+\alpha)} N^s \widehat\phi(s) \frac{L_{RS}(s;\chi_1,\chi_2)}{L(2s,\chi_{1,2})} \,ds.
\]
Li \cite[Theorem 2.2]{Li79} has shown that the Rankin--Selberg $L$-function $L_{RS}$ satisfies a functional equation which relates $L_{RS}(s;\chi_1,\chi_2)$ with $L_{RS}(1-s;\overline{\chi_1},\overline{\chi_2})$. Hereby we may deduce  
the upper bound 
\[
c(\chi_1,\chi_2) \ll_\eps 
\frac{(q_1q_2)^2}{(q_1,q_2)^{2-\eps}}
\] 
 for the conductor $c(\chi_1,\chi_2)$ of $L_{RS}(s;\chi_1,\chi_2)$ (see Remark \ref{rem:li-langlands}) and 
the Phragm\'en--Lindel\"of principle yields the convexity bound 
\begin{equation}\label{eq:bound-L_RS}
L_{RS}(1/2+\alpha+it;\chi_1,\chi_2) \ll_\eps  (q_1 q_2(1+|t|)^2)^{1/2-\alpha+\eps}
\end{equation}
for every $\eps>0$ 
 and all $t \in \Rr$.  
By the fast decay of $\widehat\phi$ and \eqref{eq:llll}, we thus get
\begin{equation}\label{eq:SN_versch}
S_N(\chi_1,\chi_2) \ll_\eps 
N^{1/2}(\log N)Q^{1+\eps}
\end{equation}
if $\chi_1 \neq \chi_2$.

\begin{remarkf}\label{rem:li-langlands}
The intricate general functional equation for Rankin--Selberg $L$-functions for convolutions of holomorphic cusp forms in {\cite[Theorem 2.2]{Li79}} simplifies considerably under our assumption that the level is a fundamental discriminant that is not an integral multiple of~$8$ -- at least, after working through the extensive notation that is necessary there (and noting that the definition of ``$N$'' in \cite[\S 2]{Li79} contains probably a typographical error as it should denote the \emph{least common multiple} and not the maximum of ``$N_1$'' and ``$N_2$''). For instance, the second and third product in \cite[(2.11)]{Li79} vanish and the conditions A)--C) on page 141 are trivially satisfied then.

The complexity of the functional equation in its general form 
displays the major drawback of considering these $L$-functions from the ``elementary'', 
classical viewpoint and not using the correspondence to \mbox{$L$-functions} of automorphic representations, which usually take a more natural form (see {\cite[\S2.3]{Mic07}} and the references there). 
The effort needed to apply this equation when $q_1,q_2$ are not fundamental discriminants seems disproportionate and one would certainly be well-advised to translate the situation to the automorphic setting then. 
Harcos and Michel \cite[p.~582]{HM06} mention that the bounds 
\[
\frac{(q_1 q_2)^2}{(q_1, q_2)^4} \leq c(\chi_1,\chi_2) \leq \frac{(q_1 q_2)^2}{(q_1, q_2)}
\]
 for the conductor of $L_{RS}(s;\chi_1,\chi_2)$ can be derived using the local Langlands correspondence, which then also yield the convexity bound \eqref{eq:bound-L_RS}. 
\end{remarkf}

\begin{remf}
Note that the existing subconvexity bounds for Rankin--Selberg convolutions either require that one of the two involved cusp forms is fixed \cite{HM06} or that one cusp form has a much smaller level than the other \cite{HM12arx}. 
 Although one may  
hope that more general results will be obtained in the 
future, these will probably only slightly improve our results (due to the saving of probably only a tiny power of the conductor) and will therefore be less important for us than for other applications.

The best bound one could hope for in \eqref{eq:bound-L_RS} is provided by the Lindel\"of Hypothesis. We will state the resulting large sieve inequality in Remark \ref{lem:lsi_complex_conditionalLinde}. 
\end{remf}

Let us come back to the proof of Lemma \ref{lem:lsi_complex}. If $\chi_1=\chi_2 \in \Hhatc(q)$, 
we use the bound
\begin{equation}\label{eq:lambda_chi_abschaetzen}
|\lambda_\chi(n)|\leq \sum_\stacksum{\aF \in Z(q)}{\No(\aF)=n} 1  \leq \prod_{p^v || n}(v+1) = \tau(n), 
\end{equation}
where the second inequality is due to the fact that each prime divisor $p$ of $n$ splits into at most two distinct prime ideals in the quadratic field $\Qppq$. 
Therefore
\begin{equation}\label{eq:SN_gleich}
S_N(\chi_1,\chi_1)\leq \sum_{n \geq 1} \tau(n)^2 \phi(n/N) \ll N(\log N)^3,
\end{equation}
where the implied constant is absolute (see \cite[(2.31)]{MV07}, for example).

\smallskip
Now that we have bounded $S_N(\chi_1,\chi_2)$ for all pairs $\chi_1, \chi_2 \in \Hhatc(Q)$, 
it remains to use  
a simple positivity argument 
and the duality principle in order to get the bound \eqref{eq:lsi_complex},   which we originally set out to prove: 
For all complex numbers $b_\chi$, indexed by the characters $\chi \in \Hhatc(Q)$, the positivity of~$\phi$ gives  
\begin{equation*}
\begin{aligned}
&\sum_{n \leq N} \Big| \sum_{\chi \in \Hhatc(Q)} b_\chi \lambda_\chi(n)\Big|^2 
\leq 
\sum_{\chi_1, \chi_2 \in \Hhatc(Q)} b_{\chi_1} \overline{b_{\chi_2}} S_N(\chi_1,\chi_2) 
\\
\leq \ & 
2 \Big(\max_{\chi_2 \in \Hhatc(Q)} \sum_{\chi_1 \in \Hhatc(Q)}|S_N(\chi_1,\chi_2)|\Big) \sum_{\chi_2 \in \Hhatc(Q)} |b_{\chi_2}|^2.
\end{aligned}
\end{equation*}
We insert the bounds \eqref{eq:SN_versch} and \eqref{eq:SN_gleich} 
 into the right-hand side of this inequality and note that
\begin{equation}\label{eq:MengeAllerCharaktereBound}
|\Hhatc(Q)| \leq \sum_{q \in \DF(Q)} h(q) \ll \sum_{q \in \DF(Q)} |q|^{1/2}(\log |q|) \ll Q^{3/2}(\log Q)
\end{equation}
by the upper class number bound 
$h(q) \ll |q|^{1/2} (\log |q|)$, 
which follows from the bound 
$L(1, \chi_q) \ll \log |q|$ (see \cite[Lemma 10.15]{MV07}, for example) and Dirichlet's class number formula. 
 Thus the bound 
\begin{equation*}
\sum_{n \leq N} \Big| \sum_{\chi \in \Hhatc(Q)} b_\chi \lambda_\chi(n)\Big|^2 \ll_\eps 
\left(N (\log N)^3+N^{1/2}(\log N)Q^{5/2+\eps}\right) 
 \sum_{\chi \in \Hhatc(Q)} |b_\chi|^2
\end{equation*}
holds for all tuples $(b_\chi)_{\chi \in \Hhatc(Q)}$ of complex numbers. By the duality principle (see \cite[p.~171]{IK04}, for example), this is equivalent to the statement of the lemma.
\end{proof}

\begin{remarkf}\label{lem:lsi_complex_conditionalLinde}
The Lindel\"of Hypothesis (for Rankin--Selberg convolutions of holomorphic cusp forms of weight one) yields 
\begin{equation*}\label{eq:L_RS_bound_on_LH}
L_{RS}(1/2+it;\chi_1,\chi_2)\ll_\eps (q_1q_2)^{\eps} (1+|t|)^{2\eps}.
\end{equation*}
This gives $S_N(\chi_1,\chi_2) \ll N^{1/2}(\log N)Q^{\eps}$ in \eqref{eq:SN_versch} and we therefore have the following conditional large sieve inequality:
\begin{equation*}
\sum_{\chi \in \Hhatc(Q)} \Big| \sum_{n \leq N} a_n \lambda_\chi(n)\Big|^2 \ll_\eps 
\left(N (\log N)^3+N^{1/2}(\log N)Q^{3/2+\eps}\right) \sum_{n \leq N} |a_n|^2
\end{equation*}
for all complex numbers $(a_n)_{n \leq N}$ and all $\eps>0$, $Q \geq 1$ and $N \geq 3$.

Given the fact that the essentially best-possible large sieve inequality for Dirichlet characters, Lemma \ref{lem:lsi_org}, can be proved unconditionally, there is some reason to hope that it might be possible to improve Lemma \ref{lem:lsi_complex} without employing any kind of subconvexity bounds for the involved $L$-functions.
\end{remarkf}

In the proof of our variant of the Bombieri--Vinogradov theorem we will need the large sieve inequality for complex class group characters in the following form: 
\begin{corollary}\label{corr:lsi_02}
Let $(a_n)$ be a complex sequence with $\sum_{n\geq 1} |a_n| < \infty$. Let $Q \geq 1$, $k \geq 2$, $c \geq \frac 1 2$  
 and $\eps>0$. Then
\begin{align}
&\sum_{\chi \in \Hhatc(Q)}  \int_{(c)} \bigg|\sum_{n\geq 1} a_n \lambda_\chi(n) n^{-s}\bigg|^2 |s|^{-(k+1)}\, |ds| \nonumber
\\
\ll_\eps & \ Q^{3/2+\eps} \sum_{n \leq Q^2} |a_n|^2n^{1-2c} (1+(\log n)^3) \label{eq:LSI_int_uncond_with_trivial}
+ Q^{\eps} \sum_{n > Q^2} |a_n|^2(n^{1-2c}+n^{1/2-2c}Q^{5/2}) (\log n)^3
\\
\ll_\eps & \ 
Q^{\eps} \sum_{n \geq 1} |a_n|^2(n^{1-2c}+n^{1/2-2c}Q^{5/2}) (1+(\log n)^3).\label{eq:LSI_int_uncond}
\end{align}
Moreover, we have
\begin{equation}\label{eq:LSI_int_cond}
\sum_{\chi \in \Hhatc(Q)}  \int_{(c)} \bigg|\sum_{n\geq 1} a_n \lambda_\chi(n) n^{-s}\bigg|^2 |s|^{-(k+1)}\, |ds| \ll_\eps  Q^{\eps} \sum_{n \geq 1} |a_n|^2(n^{1-2c}+n^{1/2-2c}Q^{3/2}) (1+(\log n)^3)
\end{equation}
if the Lindel\"of Hypothesis holds.
\end{corollary}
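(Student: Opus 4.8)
The plan is to deduce both estimates from the large sieve inequality of Lemma~\ref{lem:lsi_complex}. On the line $\Reel(s)=c$ write $s=c+it$; then $|a_n\lambda_\chi(n)n^{-s}|=|a_n||\lambda_\chi(n)|n^{-c}$ is independent of $t$, so any bound obtained for $\sum_{\chi\in\Hhatc(Q)}\bigl|\sum_n a_n\lambda_\chi(n)n^{-s}\bigr|^2$ holds uniformly in $t$. Since $k\ge 2$ and $c\ge\frac12$, the weight $|s|^{-(k+1)}$ is integrable along the line and $\int_{(c)}|s|^{-(k+1)}\,|ds|$ is a finite constant depending only on $k$; multiplying the uniform bound by this constant reduces matters to estimating $\sum_{\chi}\bigl|\sum_n a_n\lambda_\chi(n)n^{-s}\bigr|^2$, which after $\sum_{n\ge1}=\sum_{n\le Q^2}+\sum_{n>Q^2}$ and $|x+y|^2\le2|x|^2+2|y|^2$ splits into a ``small $n$'' and a ``large $n$'' contribution.

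For $n\le Q^2$ I would not invoke Lemma~\ref{lem:lsi_complex} — in this range its bound is weaker than the trivial one — but instead bound the number of characters. By \eqref{eq:lambda_chi_abschaetzen} we have $|\lambda_\chi(n)|\le\tau(n)$, so Cauchy--Schwarz together with $\sum_{n\le Q^2}\tau(n)^2/n\ll(\log Q)^4$ gives $\bigl|\sum_{n\le Q^2}a_n\lambda_\chi(n)n^{-s}\bigr|^2\ll(\log Q)^4\sum_{n\le Q^2}|a_n|^2n^{1-2c}$; multiplying by $|\Hhatc(Q)|\ll Q^{3/2}\log Q$ from \eqref{eq:MengeAllerCharaktereBound} produces the first term $Q^{3/2+\eps}\sum_{n\le Q^2}|a_n|^2n^{1-2c}(1+(\log n)^3)$ of \eqref{eq:LSI_int_uncond_with_trivial}.

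For $n>Q^2$ I would decompose into dyadic blocks $B_j=(2^jQ^2,2^{j+1}Q^2]$ and set $g_j(\chi)=\sum_{n\in B_j}a_n\lambda_\chi(n)n^{-s}$. Applying Lemma~\ref{lem:lsi_complex} to each $B_j$ with $N=2^{j+1}Q^2$ (and coefficients $a_nn^{-s}$, of squared modulus $|a_n|^2n^{-2c}$), and using $N\asymp n$ on $B_j$, gives $\sum_{\chi}|g_j(\chi)|^2\ll_\eps\sum_{n\in B_j}|a_n|^2\bigl(n^{1-2c}(\log n)^3+n^{1/2-2c}(\log n)Q^{5/2+\eps}\bigr)$. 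Recombining the blocks by Cauchy--Schwarz with weights $(1+j)^2$ (so that $\sum_j(1+j)^{-2}<\infty$), using $1+j\ll1+\log n$ on $B_j$, and absorbing the resulting logarithmic factors then yields the second term of \eqref{eq:LSI_int_uncond_with_trivial}; the weaker bound \eqref{eq:LSI_int_uncond} follows on combining the two ranges, since $Q^{3/2}n^{1-2c}\le n^{1/2-2c}Q^{5/2}$ when $n\le Q^2$. The conditional estimate \eqref{eq:LSI_int_cond} is obtained by the same dyadic argument with the conditional large sieve inequality of Remark~\ref{lem:lsi_complex_conditionalLinde} in place of Lemma~\ref{lem:lsi_complex} (so $Q^{3/2}$ replaces $Q^{5/2}$); under the Lindel\"of Hypothesis the large sieve already beats the trivial bound for every $n\ge1$, so the dyadic step can be run over the whole range and no separate small-$n$ treatment is needed.

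The only delicate point is the accounting of logarithmic factors in the dyadic recombination: gluing the blocks together costs powers of $\log n$ (from the block index) and of $\log Q$ (from $|\Hhatc(Q)|$), and one must verify these stay within the budget allowed by \eqref{eq:LSI_int_uncond_with_trivial}. This is precisely what forces the cut of the $n$-range at $Q^2$, where the trivial bound and the large-sieve bound cross over, and what makes it necessary to carry the two terms $n^{1-2c}$ and $n^{1/2-2c}Q^{5/2}$ separately through the argument. The remaining ingredients — the Cauchy--Schwarz steps, the divisor-sum bound $\sum_{n\le x}\tau(n)^2/n\ll(\log x)^4$, and the triviality of the $s$-integral — are routine.
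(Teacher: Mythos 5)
Your structural outline --- factor off the $s$-integral using the $t$-uniformity of the large sieve bound (valid, since Lemma~\ref{lem:lsi_complex} is applied with coefficients $a_nn^{-s}$ whose moduli $|a_n|n^{-c}$ do not depend on $t$), split the $n$-range at $Q^2$, use a trivial bound for small $n$ and Lemma~\ref{lem:lsi_complex} for large $n$ --- is the same as the paper's, and your treatment of $n\le Q^2$ (Cauchy--Schwarz against $\sum_{n\le Q^2}\tau(n)^2/n\ll(\log Q)^4$ and the character count \eqref{eq:MengeAllerCharaktereBound}) is fine, as is the observation that \eqref{eq:LSI_int_uncond_with_trivial} implies \eqref{eq:LSI_int_uncond} via $Q^{3/2}n^{1-2c}\le n^{1/2-2c}Q^{5/2}$ for $n\le Q^2$.

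The gap is in the dyadic recombination for $n>Q^2$. After Cauchy--Schwarz with weights $(1+j)^2$ and the bound $(1+j)^2\ll(\log n)^2$ on $B_j$, what you obtain is
\[
\sum_{n>Q^2}|a_n|^2\bigl(n^{1-2c}(\log n)^5+n^{1/2-2c}(\log n)^3Q^{5/2+\eps}\bigr),
\]
and while the second part matches the claimed bound, the first has $(\log n)^5$ in place of $(\log n)^3$. Your phrase ``absorbing the resulting logarithmic factors'' is where this breaks down: there is nowhere to put the extra $(\log n)^2$, since $(\log n)^2$ is not $\ll Q^\eps$ uniformly over $n>Q^2$, and any choice of summable weights $w_j$ in the recombination forces the same $(\log n)^{2+o(1)}$ loss. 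The proofs the paper defers to (Bombieri's Th\'eor\`eme~10 and \cite[Corollary~3.3]{MP13}) avoid the recombination entirely: writing $F_\chi(s)=\sum_{n}a_n\lambda_\chi(n)n^{-s}$ and $A_\chi(x)=\sum_{n\le x}a_n\lambda_\chi(n)$, Abel summation gives $F_\chi(s)/s=\int_1^\infty A_\chi(x)x^{-s-1}\,dx$, so Plancherel yields
\[
\int_{(c)}|F_\chi(s)|^2\,|s|^{-2}\,|ds|=2\pi\int_1^\infty|A_\chi(x)|^2\,x^{-2c-1}\,dx,
\]
and since $k\ge2$ and $c\ge\frac12$ one has $|s|^{-(k+1)}\le c^{1-k}|s|^{-2}\le 2^{k-1}|s|^{-2}$. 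Summing over $\chi$, inserting Lemma~\ref{lem:lsi_complex} (resp.\ the trivial bound for $x\le Q^2$) at each $x$, and swapping $\int_1^\infty\bigl(\sum_{n\le x}|a_n|^2\bigr)\cdots x^{-2c-1}\,dx=\sum_n|a_n|^2\int_n^\infty\cdots x^{-2c-1}\,dx$ replaces your discrete gluing by $\int_{x\asymp n}dx/x=O(1)$ and produces \eqref{eq:LSI_int_uncond_with_trivial} with the stated log powers (for $c$ bounded away from $\frac12$). As written, your argument proves a version of the corollary weakened by a factor $(\log n)^2$ in the $n^{1-2c}$ term, and the same defect carries over to your conditional argument for \eqref{eq:LSI_int_cond}.
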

\begin{proof}
The bounds \eqref{eq:LSI_int_uncond} and \eqref{eq:LSI_int_cond} 
follow from Lemma \ref{lem:lsi_complex} and Remark \ref{lem:lsi_complex_conditionalLinde}, respectively, along the lines of the proofs of \cite[{Th\'eor\`eme 10}]{Bom87} and \cite[Corollary 3.3]{MP13}. As for the bound \eqref{eq:LSI_int_uncond_with_trivial}, we additionally note that if $N \leq Q^2$, then the trivial bound
\[
\sum_{\chi \in \Hhatc(Q)} \Big| \sum_{n \leq N} a_n \lambda_\chi(n)\Big|^2 \ll_\eps Q^{3/2+\eps} N (\log N)^3 \sum_{n \leq N} |a_n|^2,
\]
which follows for all $\eps>0$ from the Cauchy--Schwarz inequality, \eqref{eq:lambda_chi_abschaetzen} and \eqref{eq:MengeAllerCharaktereBound}, is at least as good as the bound 
in Lemma \ref{lem:lsi_complex}. 
\end{proof}


\section{Smooth results of Bombieri--Vinogradov type}\label{sec:bv}

Being now equipped with the basic notions and a large sieve inequality for complex class group characters, we may now proceed to the proof of Theorem~\ref{thm:BV_01_PNT}. We will derive it from a ``well-distribution'' result for smoothed versions of a Chebyshev-type function for integers represented by binary quadratic forms. 
Interestingly, we may save here a positive power of $X$ over ``trivial'' bounds if we confine ourselves to sets $M(Q) \subseteq \DF(Q)$ of discriminants for which no (positive or negative) fundamental discriminant has many integer multiples in~$M(Q)$ (see Remark \ref{rem:save_power_of_X}).

\begin{definition}\label{def:Bedingung_Ad}
	For any $Q \geq 1$, let $M(Q)$ be a subset of $\DF(Q)$. 
	We say that 
	$\nu \in[0,1]$ is a \emph{divisor frequency} of $M(Q)$ 
	if it satisfies the property:
  \begin{gather}\label{Bedingung_M} 
  \begin{aligned}
  &\text{The cardinality of the set $\{q \in M(Q):\, q'  \mid q\}$ is 
	at most $Q^{\nu}$ for each}\\&\text{(positive or negative) fundamental discriminant $q'$ with $1 < |q'| \leq Q$.}
  \end{aligned}
  \end{gather}
\end{definition}

\noindent For all $X \geq 3$, all $q \in \DF$, all $C \in \Kcl(q)$ and all integers $k \geq 0$, we define
   \begin{align}\label{eq:def_psik}
	\psi_k(X;q,C)=\ &\frac{1}{k!}\sum_\stacksum{n\leq X}{n \in \Repr(q,C)} \Lambda(n) \left(\log \frac{X}{n}\right)^k w(C,n),
   \end{align}
	where $w(C,n)$ is given by \eqref{eq:weight_w-Cn_for_primepowers}.

\begin{theorem}\label{thm:BV_01}
	Let $M(Q) \subseteq \DF(Q)$ for some $Q \geq 1$ and let $\nu \in (0,1]$ be a divisor frequency of $M(Q)$. For every integer $k \geq 2$, every (arbitrarily large) real number $A>0$ and every (arbitrarily small) real number $\eps>0$, there exists a real number $B=B(A)$ such that
  \begin{equation}\label{eq:my_bv_thm}
  \sum_{q \in M(Q)} \max_{C \in \Kcl(q)} \max_{Y \leq X} \bigg|\psi_k(Y;q,C)- \frac{1}{h(q)}\sum_{K \in \Kcl(q)}\psi_k(Y;q,K) \bigg|\ll Q^{\nu/2}X(\log X)^{-A}
  \end{equation}
  for $ Q^{4(1+(2-\nu)(3-\nu)/3)+\eps} \leq X(\log X)^{-B}$. 
	The implied constant depends on $\eps$, $A$, $k$ and $\nu$; the dependence on $\eps$ is effective, the dependence on $A$, $k$ and $\nu$ is non-effective. The constant $B$ is explicitly computable; in particular, one may choose $B=16A+300$.
\end{theorem}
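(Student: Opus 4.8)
The strategy is to follow Gallagher's treatment of the classical Bombieri--Vinogradov theorem, as presented in \cite[\S 7]{Bom87}, but with the ideal class group characters $\chi \in \Hhat(q)$ playing the role of Dirichlet characters. First I would express $\psi_k(Y;q,C)$ via the orthogonality relations on $\Hhat(q)$: writing $\psi_k(Y;q,C)$ as an average over $\chi \in \Hhat(q)$ of twisted sums $\psi_k(Y;q,\chi):=\frac{1}{k!}\sum_{n\leq Y}\Lambda(n)\lambda_\chi(n)(\log(Y/n))^k$ (adjusting for the weights $w(C,n)$ via \eqref{eq:weight_w-Cn_for_primepowers}), the main term corresponds precisely to the trivial character $\chiq_0$ and cancels against the subtracted average $\frac{1}{h(q)}\sum_K \psi_k(Y;q,K)$. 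Hence the left-hand side of \eqref{eq:my_bv_thm} is controlled by
\[
\sum_{q\in M(Q)} \frac{1}{h(q)} \sum_{\chi\in\Hhat(q),\,\chi\neq\chiq_0} \max_{Y\leq X}\big|\psi_k(Y;q,\chi)\big|,
\]
up to acceptable error from the ramified primes and the cases in \eqref{eq:weight_w-Cn_for_primepowers}.

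\textbf{Splitting into real and complex characters.} The sum over $\chi\neq\chiq_0$ splits into the real characters (those with $\chi^2=\chiq_0$) and the complex characters ($\chi\in\Hhatc(q)$). For the real characters one uses the fact, recalled in the introduction to Section \ref{sec:largesieve}, that $\lambda_\chi$ is a Dirichlet convolution of (real) Dirichlet characters --- concretely $\lambda_\chi = \chi_q * \eta$ for a suitable real Dirichlet character $\eta$ coming from the genus theory of $\Kcl(q)$ --- so that $\psi_k(Y;q,\chi)$ can be rewritten in terms of $\psi_k$ for arithmetic progressions, and the original Bombieri--Vinogradov theorem \eqref{eq:BV_orig} applies. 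Because the moduli involved are divisors of $q$ and divisor frequency controls how often a given modulus recurs, this contributes an acceptable amount provided $X$ is large enough relative to a power of $Q$; the condition $q\not\equiv 0\mods 8$ keeps this clean by ensuring a unique real character modulo $|q|$. For the complex characters, this is where Corollary \ref{corr:lsi_02} (the large sieve inequality for complex ideal class group characters) enters. Following Gallagher, I would use Perron's formula / the Mellin representation to write $\psi_k(Y;q,\chi)$ as a contour integral of $-\frac{L'}{L}(s;\chi)\cdot\text{(kernel)}$, truncate and shift contours, and bound the resulting sums by applying Corollary \ref{corr:lsi_02} to the Dirichlet coefficients of $L'/L$ (i.e.\ $\Lambda(n)\lambda_\chi(n)$), combined with a zero-free region / Siegel--Walfisz input (Proposition \ref{prop:sw-goldstein}) to handle the range of small conductors and Landau's theorem (Proposition \ref{prop:landau-siegel1}) to dispose of possible exceptional discriminants.

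\textbf{Balancing the exponent.} The precise exponent $4(1+(2-\nu)(3-\nu)/3)+\eps$ comes from optimizing the decomposition of the range of summation $n\leq X$ into a ``smooth/well-factorable'' part handled by the large sieve and a part of bounded size handled by Siegel--Walfisz, exactly as in the classical argument but with the ``quality'' $N(\log N)^3 + N^{1/2}(\log N)Q^{5/2+\eps}$ of Corollary \ref{corr:lsi_02} replacing the classical $(Q^2+N)$. One performs a Vaughan-type (or Heath-Brown) identity to reduce $\Lambda(n)\lambda_\chi(n)$ to bilinear (type I / type II) sums; the type II sums are attacked by the large sieve inequality after splitting into dyadic ranges, while type I sums use the multiplicativity of $\lambda_\chi$ and a divisor bound. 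The factor $Q^{\nu/2}$ on the right, rather than $Q^{1/2}$, reflects that the number of characters being summed over $M(Q)$ is effectively $Q^{\nu/2}$-weighted once the divisor frequency is taken into account, and the bookkeeping has to track this carefully through each application of Cauchy--Schwarz.

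\textbf{Main obstacle.} The hardest part is the treatment of the complex characters: one must carry through Gallagher's contour-shifting and bilinear decomposition while keeping \emph{uniform} control over the conductor and the $Q$-aspect, so that the $Q^{5/2+\eps}$ loss in Corollary \ref{corr:lsi_02} (which is the price of using the convexity bound for Rankin--Selberg $L$-functions rather than subconvexity) is exactly balanced against the $N$-aspect saving. Getting the optimal exponent $4(1+(2-\nu)(3-\nu)/3)$ --- which specializes to $\tfrac{20}{3}$ when $\nu$ is close to $1$, as in Theorem \ref{thm:BV_01_PNT} --- requires choosing the truncation parameters (the position to which contours are shifted, the dyadic cutoff separating type I and type II ranges) as functions of $\nu$ and verifying that all error terms are $\ll Q^{\nu/2}X(\log X)^{-A}$ simultaneously; this is a delicate but essentially mechanical optimization once the large sieve input and the Siegel--Walfisz input are in place. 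A secondary technical point is ensuring that the contribution of a possible Landau--Siegel zero (for the real character $\chi_q$) is genuinely negligible after averaging over $q\in M(Q)$, which is where Proposition \ref{prop:landau-siegel1} is invoked, at the cost of the non-effective dependence on $A$ and $\nu$.
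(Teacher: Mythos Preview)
Your outline is essentially the paper's own strategy: orthogonality over $\Hhat(q)$, removal of exceptional and small discriminants via Propositions~\ref{prop:landau-siegel1} and~\ref{prop:sw-goldstein}, a split into real and complex characters, Kronecker factorisation plus classical Bombieri--Vinogradov for the real part, and the large sieve of Corollary~\ref{corr:lsi_02} inside Gallagher's contour argument for the complex part. Two points deserve sharpening.

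First, the paper does \emph{not} use a Vaughan or Heath--Brown combinatorial identity. It stays entirely on the analytic side: one writes $\wpsi_k(Y;q,\chi)$ as the Mellin integral of $-\tfrac{L'}{L}(s,\lambda_\chi)$ and applies Gallagher's decomposition $\tfrac{L'}{L}=G_z(1-LM_z)+F_z(1-LM_z)+L'M_z$ directly to the integrand, shifting the second and third pieces to a fixed abscissa $c_0=1-\tfrac{3}{24-8\nu}$ inside the critical strip. The bilinear structure is already present in these products of Dirichlet polynomials; no further combinatorial splitting is needed. The subtlety you do not mention is that $\lambda_\chi$ is only Hecke-multiplicative, so products like $\lambda_\chi(k)\lambda_\chi(\ell)$ appearing in $G_z$, $F_z M_z$, etc.\ are not of the shape $\sum a_n\lambda_\chi(n)$ to which Corollary~\ref{corr:lsi_02} applies. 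The paper handles this via the relation $\lambda_\chi(m)\lambda_\chi(n)=\sum_{d\mid(m,n)}\chi_q(d)\lambda_\chi(mn/d^2)$ and the Cauchy--Schwarz reshuffling of Lemma~\ref{lem:kowmich97_lemmas}; this is where most of the bookkeeping in Section~\ref{sec:complexchar1} goes.

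Second, your account of where $Q^{\nu/2}$ and the divisor frequency enter is slightly off. The divisor-frequency hypothesis is used \emph{only} in the real-character estimate (Section~\ref{sec:realchar1}): after Kronecker factorisation $\lambda_\chi=\chi_{d_1}\ast\chi_{d_2}$ with $d_1d_2=q$, one must count, for fixed $d_1$, how many $d_2$ satisfy $d_1d_2\in M(Q)$, and this is $\leq Q^\nu$ by assumption. In the complex-character part the set $M(Q)$ plays no role beyond $M(Q)\subseteq\DF(Q)$; the parameter $\nu$ enters there only as the target exponent, and the choices $z=Q_1^{4-\nu+2\eps}(\log X)^{2K}$ and $c_0=c_0(\nu)$ are tuned so that all five terms on the right of \eqref{eq:3terms} land under $Q^{\nu/2}X(\log X)^{-A}$ precisely when $Q^{4(1+(2-\nu)(3-\nu)/3)+\eps}\leq X(\log X)^{-B}$. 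The non-effectivity in $A,k,\nu$ comes from Siegel's theorem via Proposition~\ref{prop:sw-goldstein} and the Siegel--Walfisz input in Section~\ref{sec:realchar1}, not from Proposition~\ref{prop:landau-siegel1}, which is effective.
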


If the set $M(Q)$ is composed of negative prime discriminants, 
then $\nu=0$ is a divisor frequency of $M(Q)$. 
In this case we just fail to achieve \eqref{eq:my_bv_thm} with $\nu=0$. Nevertheless, it is worth recording that the proof of Theorem \ref{thm:BV_01} yields: 
\begin{theorem}\label{thm:BV_prime}
Let $Q \geq 1$ and let $\Pi(Q)$ be the set of negative \emph{prime} discriminants whose absolute value is at most $Q$. 
For every integer $k \geq 2$ 
 and every (arbitrarily small) real number $\eps>0$, 
we may find an absolute constant $B$ such that   
\begin{equation}\label{eq:bv_primediscr1}
  \sum_{q \in \Pi(Q)} \max_{C \in \Kcl(q)} \max_{Y \leq X} \bigg|\psi_k(Y;q,C)- \frac{1}{h(q)}\sum_{K \in \Kcl(q)}\psi_k(Y;q,K)\bigg|\ll_{\eps,k} X(\log X)^{k+3}
  \end{equation}
	for $ Q^{12+\eps} \leq X(\log X)^{-B}$. 
\end{theorem}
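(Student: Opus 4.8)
The plan is to obtain Theorem~\ref{thm:BV_prime} as a direct by-product of the proof of Theorem~\ref{thm:BV_01} by re-examining the latter in the limiting case $\nu\to 0$. Since every negative prime discriminant $q$ has the property that the only (positive or negative) fundamental discriminant $q'$ with $1<|q'|\le Q$ dividing $q$ is $q'=q$ itself, the set $\{q\in\Pi(Q):q'\mid q\}$ has cardinality at most $1$ for each such $q'$; hence any $\nu\in(0,1]$ is a divisor frequency of $\Pi(Q)$, and we may formally apply Theorem~\ref{thm:BV_01} with $\nu$ arbitrarily small. The admissible range then becomes $Q^{4(1+(2-\nu)(3-\nu)/3)+\eps'}\le X(\log X)^{-B}$, and letting $\nu\to 0$ the exponent $4(1+(2-\nu)(3-\nu)/3)$ tends to $4(1+2)=12$; absorbing the discrepancy into the freely-chosen $\eps$ gives the stated range $Q^{12+\eps}\le X(\log X)^{-B}$. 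So the first step is to verify that nothing in the proof of Theorem~\ref{thm:BV_01} actually requires $\nu>0$ except the final packaging of the error term as a negative power of $\log X$.

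Second, I would track where the hypothesis $\nu>0$ (as opposed to $\nu\ge 0$) is genuinely used. It enters only at the very end, when one trades the power saving coming from the large sieve against the number of discriminants: with $\nu=0$ one has $Q^{\nu/2}=1$, so the right-hand side of the analogue of \eqref{eq:my_bv_thm} is $X(\log X)^{-A}$ with no room to spare, but the intermediate estimates — the decomposition of $\psi_k(Y;q,C)$ via ideal class group characters, the separation into the trivial character (handled by the classical prime number theorem for the relevant Hecke $L$-function and the Siegel--Walfisz-type input), the real characters (handled by the original Bombieri--Vinogradov theorem applied to the associated Dirichlet convolutions), and the complex characters (handled by Corollary~\ref{corr:lsi_02}) — all produce bounds of the shape (a fixed power of $X$ times $Q$-powers) with an explicit, bounded power of $\log X$ attached, rather than an arbitrarily large negative power. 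Collecting these, the contribution of the complex characters is controlled by the $Q^{3/2+\eps}$ and $Q^{5/2}$ terms in \eqref{eq:LSI_int_uncond_with_trivial}, which after the standard Perron/contour manipulation and Cauchy--Schwarz over $q$ yields a bound of the form $X(\log X)^{O(k)}$ precisely when $Q^{12+\eps}\le X(\log X)^{-B}$; the real-character and trivial-character contributions are smaller in this range (the former because Bombieri--Vinogradov already gives arbitrary log-savings, the latter because $|\Pi(Q)|\ll Q/\log Q$ discriminants each contribute an error that is a negative power of $\log X$ by Siegel--Walfisz). The final exponent $k+3$ in \eqref{eq:bv_primediscr1} should emerge as $k$ from the power of $\log(X/n)$ in the definition \eqref{eq:def_psik} together with the three extra logarithms from $\tau(n)^2$ in \eqref{eq:SN_gleich} / \eqref{eq:lambda_chi_abschaetzen}.

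Concretely, the steps in order: (i) note $\nu=0$ is ``almost'' a divisor frequency of $\Pi(Q)$ and state that we re-run the proof of Theorem~\ref{thm:BV_01} with $\nu\to 0$; (ii) reduce, as in that proof, to bounding $\sum_{q\in\Pi(Q)}\max_{C}\max_{Y}|\cdots|$ via the orthogonality of $\Hhat(q)$, isolating $\chiq_0$, the real characters, and $\Hhatc(q)$; (iii) bound the complex-character piece by expanding $\psi_k$ through a truncated Perron formula, pulling the $\lambda_\chi$-Dirichlet series out, and applying \eqref{eq:LSI_int_uncond_with_trivial} — this is the place where the exponent $12$ (rather than something smaller) is forced and where the $(\log X)^{k+3}$ is born; (iv) bound the real-character piece by recognising $\lambda_{\chi}$ for $\chi^2=\chiq_0$ as a convolution $\mathbf 1 * \chi_q$ and invoking \eqref{eq:BV_orig}, which in the range $Q^{12+\eps}\le X(\log X)^{-B}$ costs only $Q^{1/2}X(\log X)^{-A}\le X(\log X)^{k+3}$; (v) bound the $\chiq_0$-piece by Siegel--Walfisz-type input (Proposition~\ref{prop:sw-goldstein}) summed over the $\ll Q/\log Q$ prime discriminants; (vi) add up. I expect step~(iii) to be the main obstacle: one must carry out the contour shift and the large-sieve application keeping completely explicit, polynomially-bounded powers of $\log X$ throughout (rather than the convenient but lossy ``$\ll (\log X)^{-A}$'' packaging used for Theorem~\ref{thm:BV_01}), and verify that the borderline case $Q^{12+\eps}$ really does leave enough of a power of $X$ to dominate the $Q^{5/2}N^{1/2}$ term after the $Q$-summation — i.e. that no hidden factor of $Q^{\nu/2}$ with $\nu>0$ was secretly needed for convergence. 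The remaining steps are routine given the machinery already assembled in the earlier sections.
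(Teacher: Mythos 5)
Your overall strategy --- re-run the proof of Theorem~\ref{thm:BV_01} with divisor frequency $\nu=0$ --- is precisely what the paper does, the exponent $4(1+(2-0)(3-0)/3)=12$ is computed correctly, and your steps (ii)--(iv) describe the right decomposition. But your diagnosis of \emph{which} step is the soft one that only yields a bounded power of $\log X$ (and hence where $(\log X)^{k+3}$ is born) is wrong, and that diagnosis is the entire content of the result.

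Contrary to what you claim, the complex-character piece controlled by Corollary~\ref{corr:lsi_02} and the real-character piece controlled by the classical large sieve both produce bounds of the form $Q^{\nu/2}X(\log X)^{-A}$ with $A$ arbitrary, \emph{even when} $\nu=0$: Sections~\ref{sec:complexchar1} and \ref{sec:realchar1} prove exactly this for all $\nu\in[0,1]$ in the respective ranges $Q^{4(1+(2-\nu)(3-\nu)/3)+\eps}\le X(\log X)^{-B}$ and $Q^{5-3\nu}\le X(\log X)^{-B}$; nothing there degrades to a fixed power of $\log X$ at $\nu=0$. (In fact, for $\Pi(Q)$ the real-character sum $E''_k$ is identically zero: a prime discriminant has exactly one prime factor, so its class group has trivial $2$-torsion and admits no non-trivial real character.) The $(\log X)^{k+3}$ therefore does not come from the large sieve, the Perron step, or the ``three extra logarithms from $\tau(n)^2$''; it comes from the \emph{exceptional (Landau--Siegel) discriminants}, which your outline never mentions. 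Proposition~\ref{prop:landau-siegel1} bounds their contribution trivially by $(\log Q)X(\log X)^{k+2}$: there are $\ll\log Q$ exceptional fundamental discriminants below $Q$ by Landau's theorem, and for each of them the only available estimate for $\frac1{h(q)}\sum_{\chi\neq\chiq_0}|\psi_k(Y;q,\chi)|$ is the pointwise bound $\ll X(\log X)^{k+2}$ (with $(\log X)^k$ from the smoothing weight and $(\log X)^2$ from $\sum_{n\le X}\Lambda(n)\tau(n)$). When $\nu>0$, the paper assumes $Q\ge(\log X)^{(2A+2k+6)/\nu}$, which makes $(\log Q)X(\log X)^{k+2}\le Q^{\nu/2}X(\log X)^{-A}$; for $\nu=0$ there is no $Q^{\nu/2}$ to hide behind and no threshold on $Q$ that rescues the estimate, so one is left with $(\log Q)X(\log X)^{k+2}\ll X(\log X)^{k+3}$, and this dominates. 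That is exactly what Remark~\ref{rem:SW+klein} is pointing at, and it is the one step you would need to identify and carry out. Your step (v) is also slightly confused: Proposition~\ref{prop:sw-goldstein} is applied to \emph{non-trivial} characters for \emph{small} discriminants $|q|\le(\log X)^D$, not to ``the $\chiq_0$-piece'' --- the trivial character is subtracted off by orthogonality and never appears in the error term.
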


\begin{remarkf}\label{rem:save_power_of_X}
To put this last result into perspective, set $f_{q}(x,y)=x^2+xy+\frac{1-q}{4}y^2$, say, for each negative fundamental prime discriminant $q \equiv 1 \mods 4$ and  consider the function 
\[
S_{q}(X)=\sum_\stacksum{p\leq X}{\exists x,y \in \Zz:\, f_{q}(x,y)=p} 
\log(p) \left(\log \frac{X}{p}\right)^2,
\]
which gives a smoothed and weighted count of the primes up to $X$ that can be represented by the form $f_{q}$ (which lies in the principal class $C_0$ of discriminant $q$). By \eqref{eq:weight_w-Cn_on_average} and Theorem~\ref{thm:cox77}, we have 
\[
S_{q}(X)
=\demi \sum_\stacksum{n\leq X}{\exists x,y \in \Zz:\, f_{q}(x,y)=n} 
\Lambda(n) \left(\log \frac{X}{n}\right)^2 w(C_0,n) +O(X^{1/2}(\log X)^3)
\]
for negative fundamental discriminants $q$ with $q \equiv 1 \mods 4$ and $|q| \leq X$.  Thus, Theorem~\ref{thm:BV_prime} implies that, for most negative prime discriminants $q$ with $|q| \leq X^{1/13}$, the function $S_{q}(X)$
 deviates from the (expectable) average function 
\[
\frac{1}{2h(q)}\sum_{K \in \Kcl(q)}e(K)\sum_\stacksum{p\leq X}{p \in \Repr(q,K)} 
\log(p) \left(\log \frac{X}{p}\right)^2 
\]
 by only a small amount at most -- 
and the sum (over $q \in \Pi(Q)$) of these discrepancies is a positive power of $X$ smaller than ``trivial'' estimates can guarantee. 
Indeed, if $X$ is large, $Q=X^{1/13}$ and $k=2$, then Theorem \ref{thm:BV_prime} beats the easy bound (compare Remark \ref{rem:BV_comparison_to_trivial}) 
\begin{equation*}
O\bigg( X (\log X)^{3} \cdot \Big(\frac{Q}{\log Q}\Big)^{1/2} \bigg)
\end{equation*}
for the left-hand side of \eqref{eq:bv_primediscr1} by a factor of size 
$
\Big(\frac{Q}{\log Q}\Big)^{1/2} (\log X)^{3-5} \gg_\eps X^{1/26-\eps}
$ 
 for all arbitrarily small $\eps>0$. 
This result is unusual as it does not seem to be possible to achieve a saving of a positive power of $X$ over the trivial bound for the corresponding smooth version of the original Bombieri--Vinogradov theorem.
\end{remarkf}

\begin{remarkf}\label{rem:BV_Linde2}
Under the assumption of the Lindel\"of Hypothesis, 
Theorem \ref{thm:BV_01} holds with 
$Q^{4-2\nu+\eps} \leq X (\log X)^{-B}$ if $\nu \geq \demi$ and with $Q^{4(2-\nu)^2/3+\eps} \leq X (\log X)^{-B}$ if $\nu  < \demi$.  
Theorem \ref{thm:BV_prime}  then 
holds with $ Q^{16/3+\eps} \leq X(\log X)^{-B}$; see Remark \ref{rem:BV-Linde01}.
\end{remarkf}

\begin{remf}
If $\nu<1$, then it does not seem to be possible to unsmooth these results, i.e.\ to take $k=0$, while keeping the given estimates, because the unsmoothing process 
produces a term of size $Q^{1/2}X(\log X)^{-D} $ (where $D$ is an arbitrary positive number). 
\end{remf} 

However, for $\nu=1$, i.e.\ for arbitrary sets $M(Q) \subseteq \DF(Q)$ of negative fundamental discriminants, 
these extra terms of size $Q^{1/2}X(\log X)^{-D} $ are not too large and we obtain:
\begin{theorem}\label{thm:BV_01_unsmoothed}
For all $q \in \DF$ and all $C \in \Kcl(q)$, define
\[
\psi(X;q,C) = \sum_\stacksum{n\leq X}{n \in \Repr(q,C)} \Lambda(n). 
\]
Let $A>0$ 
and $\eps>0$. 
Let $e(C)$ be defined by \eqref{eq:definition_of_eK}. Then there exists $B=B(A)$ such that
  \begin{equation*}
  \sum_{q \in \DF(Q)} \max_{C \in \Kcl(q)} \max_{Y \leq X} \left|\psi(Y;q,C)- \frac{Y}{e(C) h(q)}\right|\ll_{\eps,A} Q^{1/2}X(\log X)^{-A}
  \end{equation*}
  for $ Q^{20/3+\eps} \leq X(\log X)^{-B}$. 
	The constant $B$ is explicitly computable; in particular, one may choose $B=64A+350$.
\end{theorem}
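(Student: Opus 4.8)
The plan is to deduce Theorem~\ref{thm:BV_01_unsmoothed} from the smoothed result Theorem~\ref{thm:BV_01} by the standard ``unsmoothing'' procedure, carried out in the case $\nu=1$ and $M(Q)=\DF(Q)$. First I would fix an integer $k\geq 2$ (say $k=2$; one could also take $k$ larger for more comfort in the contour shifts) and relate $\psi(Y;q,C)$ to the smoothed quantities $\psi_k(Y;q,C)$. The classical device (see e.g.\ \cite[\S7]{Bom87}) is the identity
\[
\Delta_j\psi_k(X;q,C):=\frac{1}{j!}\sum_{0\leq i\leq j}(-1)^i\binom{j}{i}\psi_k\!\big((1-i\delta)X;q,C\big)=\psi_{k-j}(X;q,C)+O\big(\text{error}\big)
\]
for a small parameter $\delta>0$, finite-differencing the $k$-fold logarithm down to a $0$-fold one; applying this with $j=k$ turns $\psi_k$ into $\psi=\psi_0$ plus error terms that are controlled by a ``trivial'' bound on $\psi_k$ over the short intervals $[(1-k\delta)X,X]$. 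The main term behaves correctly: the finite-difference operator applied to $\tfrac{1}{h(q)}\sum_K\psi_k(Y;q,K)$ produces $\tfrac{1}{h(q)}\sum_K\psi(Y;q,K)$, and by \eqref{eq:weight_w-Cn_on_average} together with the prime number theorem (and \eqref{eq:definition_of_eK}) one has $\sum_K\psi(Y;q,K)=\tfrac{1}{e(C)}\big(Y+O(\cdots)\big)$ after accounting for the $e(C)$ weighting--- more precisely $\tfrac{1}{h(q)}\sum_K e(K)\psi(Y;q,K)=Y+O(\text{error})$, which one must rearrange so that the target $\tfrac{Y}{e(C)h(q)}$ emerges; this is where the ramified-prime discrepancy (a contribution of size $O(\log Y\cdot\text{(number of }q\text{'s}))$, negligible after summation since $\sum_{q\le Q}\omega(q)\ll Q\log Q$) gets absorbed.

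The two sources of error to be balanced are: (i) the smoothed estimate \eqref{eq:my_bv_thm} itself, which with $\nu=1$ gives $\ll Q^{1/2}X(\log X)^{-A'}$ in the range $Q^{20/3+\eps}\leq X(\log X)^{-B'}$ (since $4(1+(2-1)(3-1)/3)=4(1+2/3)=20/3$); and (ii) the unsmoothing error, which after summing over $q\in\DF(Q)$ and using the maximum over $C$ and $Y$ is of the shape $\delta^{?}X\cdot|\DF(Q)|$ times logs plus a ``Siegel--Walfisz-type'' main-term error of size $Q^{1/2}X(\log X)^{-D}$ coming from estimating $\sum_K\psi_k$ itself. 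The point flagged in the remark preceding the theorem is that for $\nu=1$ these extra terms are of size $Q^{1/2}X(\log X)^{-D}$ with $D$ arbitrary, hence harmless; so after choosing $\delta$ a suitable negative power of $\log X$ and $A'$, $D$ large enough in terms of $A$, everything collapses to $\ll Q^{1/2}X(\log X)^{-A}$. Tracking the logarithmic powers: one loses roughly $k$ factors of $\log X$ in each finite-difference step and needs the smoothed bound with $A'$ larger than $A$ by a controlled amount, leading to the stated explicit value $B=64A+350$ (the jump from $16A+300$ in Theorem~\ref{thm:BV_01} reflecting the extra slack needed through the unsmoothing).

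Concretely, the steps in order are: (1) record the finite-difference identity expressing $\psi$ in terms of $\psi_k$ at shifted arguments with error $O\big(\delta X + (\text{short-interval Chebyshev bound})\big)$ per $(q,C,Y)$; (2) apply Theorem~\ref{thm:BV_01} with $\nu=1$, $M(Q)=\DF(Q)$, $A'=A'(A)$ and the integer $k$, to bound the smoothed main contribution; (3) bound the short-interval and $\delta X$ errors after summing over $q$, using $|\DF(Q)|\ll Q$ and a crude (e.g.\ Brun--Titchmarsh or even trivial lattice-point) estimate for $\psi_k$ on $[(1-k\delta)X,X]$ restricted to $\Repr(q,C)$, which contributes $\ll Q^{1/2}X(\log X)^{O(1)}\cdot(\text{power of }\delta\text{ or log})$; (4) identify the main term $\tfrac{1}{h(q)}\sum_K\psi(Y;q,K)$ with $\tfrac{Y}{e(C)h(q)}$ up to an admissible error via \eqref{eq:weight_w-Cn_on_average}, the prime number theorem, and a Siegel--Walfisz input (Proposition~\ref{prop:sw-goldstein}) to handle the $\chi_q(p)$-twisted sum $\sum_{p\le Y}\chi_q(p)$ on average over $q$, absorbing the ramified primes; (5) optimize $\delta=(\log X)^{-c}$ and choose $A'$, together with the Siegel--Walfisz saving, large enough to yield $(\log X)^{-A}$, and read off $B=64A+350$ from the range condition $Q^{20/3+\eps}\le X(\log X)^{-B'}$ after the substitutions.

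The main obstacle I expect is step~(3)/(5): the unsmoothing inevitably introduces a term of size $\asymp\delta X\cdot|\DF(Q)|\asymp\delta X Q$, which is \emph{larger} than the target $Q^{1/2}X(\log X)^{-A}$ unless one is careful --- one needs to exploit that the difference $\psi_k(Y)-\tfrac{1}{h(q)}\sum_K\psi_k(Y;q,K)$ (not $\psi_k$ itself) is what gets finite-differenced, so the crude $\delta X$ error attaches to the \emph{discrepancy}, not to the full count, and then a Brun--Titchmarsh-type bound for primes represented by a form class in a short interval (of size $\ll \delta X/(\sqrt{|q|}\,\varphi\text{-type factor})$, summing to $\ll \delta X Q^{1/2}\log X$) brings it below threshold once $\delta$ is a sufficiently large negative power of $\log X$. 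Making this short-interval input precise --- essentially an upper-bound sieve for $\Repr(q,C)\cap(X',X]$ uniform in $q$ --- and checking that the resulting power of $\log X$ is covered by the saving from Theorem~\ref{thm:BV_01} (with $A'$ chosen appropriately) is the delicate bookkeeping that produces the constant $64A+350$.
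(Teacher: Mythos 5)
Your overall strategy---reduce to the smoothed Theorem~\ref{thm:BV_01} with $\nu=1$, unsmooth by finite differences, and identify the main term via \eqref{eq:weight_w-Cn_on_average} and the prime number theorem---is the same as the paper's, and your inference of the admissible exponent $20/3+\eps$ and the value $B=64A+350$ (from $B(A')=16A'+300$ with $A'=4A+3$) is essentially correct. There are, however, two substantive issues.

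First, and most seriously, in step~(4) you propose to handle the resulting character sum $\sum_{p\le Y}\chi_q(p)$ ``on average over $q$'' using Proposition~\ref{prop:sw-goldstein}. That proposition is a Siegel--Walfisz theorem for \emph{ideal class group} characters, not for Dirichlet characters, so it is the wrong tool; more importantly, \emph{any} Siegel--Walfisz input for $\chi_q$ can only control moduli $|q|\le(\log X)^D$, whereas here $|q|$ ranges up to $Q\asymp X^{3/(20+3\eps)}$, a genuine power of $X$. The paper uses the ordinary Siegel--Walfisz theorem (in the form \eqref{eq:SW_for_Dirichlet_Char}) only for exceptional and small $q$, and handles the remaining range by the \emph{average} Dirichlet-character estimate underlying the original Bombieri--Vinogradov theorem (exactly the estimate \eqref{eq:E''_k2} for $E''_{2;k}$, specialized to $\nu=1$, $k=2$). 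Without this step, the argument has a genuine gap: for $q$ of moderate size there is simply no pointwise cancellation available in $\psi_2(Y;\chi_q)$, and one must average.

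Second, the worry in your last paragraph---that the $\delta X$ unsmoothing error needs a Brun--Titchmarsh-type bound for $\Repr(q,C)$ in short intervals---is unnecessary and is avoided by a cleaner decomposition. The paper does not finite-difference $\psi_k(Y;q,C)-\tfrac{1}{h(q)}\sum_K\psi_k(Y;q,K)$; it first peels off the $w(C,p)$ versus $e(C)$ and ramified-prime discrepancies to reduce $|\psi(Y;q,C)-Y/(e(C)h(q))|$ to $|\psi_0(Y;q,C)-Y/h(q)|$ plus an admissible error, then finite-differences the difference $\psi_2(Y;q,C)-Y/h(q)$, whose deterministic part $Y/h(q)$ shifts linearly under $Y\mapsto e^{\pm\delta}Y$; the resulting ``drift'' error after summing over $q$ is $\ll\delta X Q^{1/2}\log Q$, which is already below threshold once $\delta$ is a suitable negative power of $\log X$, with no sieve input at all. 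After that, $\psi_2(Y;q,C)-Y/h(q)$ is split into $\psi_2(Y;q,C)-\tfrac{1}{h(q)}\sum_K\psi_2(Y;q,K)$ (handled by Theorem~\ref{thm:BV_01}) and $\tfrac{1}{h(q)}(Y-\sum_K\psi_2(Y;q,K))$, and it is the latter term that, via \eqref{eq:weight_w-Cn_on_average}, produces $Y-\psi_2(Y)$ and $\psi_2(Y;\chi_q)$; the former is treated with the prime number theorem, the latter with Siegel--Walfisz for small $q$ and the average Bombieri--Vinogradov character-sum bound for the rest.
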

\noindent As usual, Theorem \ref{thm:BV_01_PNT} follows by partial integration from this result.


\section{Proofs of the Bombieri--Vinogradov type results}\label{sec:prelim_1}

Let $A>0$ (arbitrarily large) and $\eps>0$ (arbitrarily small) be real numbers; let $k \geq 2$ be an integer; let $M(Q) \subseteq \DF(Q)$ be a set of negative fundamental discriminants $q \not \equiv 0 \mods 8$ 
 with divisor frequency $\nu \in [0,1]$. These numbers will be considered as fixed parameters which the implied constants in the estimates of this and the subsequent two sections may depend on. 

Let $X \geq Q$. By definitions \eqref{eq:weight_w-Cn_for_primepowers} and \eqref{eq:def_psik}, we have
\[
\psi_k(X;q,C)=\frac{1}{k!}\sum_\stacksum{\aF \in B_q(C) \cap Z(q)}{\No(\aF) \leq X}\Lambda(\No(\aF)) \left(\log \frac{X}{\No(\aF)}\right)^k
\]
for all $q \in \DF(Q)$ and all $C \in \Kcl(q)$. For ease of notation we set
\begin{equation}\label{eq:def_Ek}
E_k(X;q)=\max_{C \in \Kcl(q)} \max_{Y \leq X} \bigg|\psi_k(Y;q,C)- \frac{1}{h(q)}\sum_{K \in \Kcl(q)}\psi_k(Y;q,K)\bigg|.
\end{equation}
Thus, if the assumptions of Theorem \ref{thm:BV_01} and Theorem \ref{thm:BV_prime} hold, we have to prove the bounds 
\begin{equation}\label{eq:my_bv_thm_ideal}
  \sum_{q \in M(Q)}  E_k(X;q) \ll Q^{\nu/2}X(\log X)^{-A} 
  \end{equation}
if $\nu>0$ and $Q^{4(1+(2-\nu)(3-\nu)/3)+\eps} \leq X(\log X)^{-B(A)}$, 
and 
\begin{equation}\label{eq:bv_primediscr1_ideal}
  \sum_{q \in \Pi(Q)} E_k(X;q) \ll X(\log X)^{k+3}  
  \end{equation}
if $Q^{12+\eps} \leq X(\log X)^{-B}$.

\medskip
We start the proof of both \eqref{eq:my_bv_thm_ideal} and \eqref{eq:bv_primediscr1_ideal} by appeal to the orthogonality property of the finite abelian groups $\Hhat(q)$ of ideal class group characters. Define
\[
\psi_k(Y;q,\chi)=\frac{1}{k!}\sum_\stacksum{\aF \in Z(q)}{\No(\aF) \leq Y}\Lambda(\No(\aF)) \chi(\aF) \left(\log \frac{Y}{\No(\aF)}\right)^k
\]
for all $q \in \DF(Q)$, all $\chi \in \Hhat(q)$ and all $k \geq 0$.
Orthogonality  
yields
\begin{equation*}
\begin{aligned}
\psi_k(Y;q,C)=&\ \sum_\stacksum{\aF \in Z(q)}{\No(\aF)\leq Y}\Lambda(\No(\bF))\bigg(\log \frac{X}{\No(\aF)}\bigg)^k \Bigg( \frac{1}{h(q)}\sum_{\chi \in \Hhat(q)} \overline{\chi}(B_q(C)) \chi(\aF)\Bigg)
\\
=&\ \frac{1}{h(q)}\sum_{\chi \in \Hhat(q)} \overline{\chi}(B_q(C))\psi_k(Y;q,\chi)
\end{aligned}
\end{equation*}
for all $q \in \DF(Q)$ and all $C \in \Kcl(q)$. Together with the triangle inequality we thus get
\begin{equation}\label{eq_BV1}
\begin{aligned}
\sum_{q\in M(Q)} E_k(X;q) \leq \ & \max_{Y \leq X}  \sum_{q\in M(Q)}  \frac{1}{h(q)}\sum_{\chi\neq \chiq_0}\left|\psi_k(Y;q,\chi)\right|.  
\end{aligned}
\end{equation}

As before, for every 
$q \in \DF$, we let $\chi_q$ denote the unique primitive real Dirichlet character modulo $|q|$. 
By Siegel's theorem (see \cite[Theorem 11.14]{MV07}, for example), we have the unconditional, non-effective lower bound $|q|^{-\eps}\ll_\eps L(1,\chi_q)$ for the corresponding Dirichlet $L$-function. This yields the lower class number bound 
\begin{equation*}\label{eq:classno_bound_fuer_alle}
|q|^{1/2-\eps}\ll_\eps h(q)
\end{equation*}
by Dirichlet's class number formula (see \cite[(2.31)]{IK04}, for example). Yet, there exists a better bound for many $q$ and it turns out that the contribution from the other discriminants is often negligible: 
		We know 
	(see \cite[Theorem 11.3]{MV07}) 
	that there exists an absolute constant $c_1>0$ such that, for any $q \in \DF$,  the Dirichlet $L$-function $L(s,\chi_q)$ has at most one zero, the \emph{Landau--Siegel zero} for the modulus $|q|$, in the set 
  \begin{equation*}
	\Big\{s=\sigma+it \in \Cc:\,  \sigma \geq 1 -\frac{c_1}{\log{|q|(|t|+4)}} \Big\}.
  \end{equation*} 
  Moreover, there exists $c_2=c_2(c_1)>0$ such that 
	 $L(1, \chi_q) \geq c_2 (\log |q|)^{-1}$ 
	if $L(s,\chi_q)$ has no Landau--Siegel zero (see \cite[Theorem 11.4]{MV07}). 
	Thus, by the class number formula, 
	there exists $c_3=c_3(c_1)>0$ such that
 \begin{equation}\label{eq:classno_bound1}
|q|^{1/2}(\log |q|)^{-1}\leq c_3 h(q)
\end{equation} 
holds for all $q \in \DF$ for which $L(s,\chi_q)$ has no Landau--Siegel zero. We fix such a value of $c_3$.

The following proposition will give an upper bound for the contribution to 
the right side of \eqref{eq_BV1} coming from the (presumably empty) set $\DF_{\textup{ex}}(Q) \subset \DF(Q)$ of exceptional fundamental discriminants; here we call $q \in \DF$ exceptional if it fails to satisfy \eqref{eq:classno_bound1} for the fixed value of $c_3$ (and therefore $L(s,\chi_q)$ has a Landau--Siegel zero then). 
\begin{proposition}\label{prop:landau-siegel1}
Let $M_{\textup{ex}}(Q)=\DF_{\textup{ex}}(Q) \cap M(Q)$ be the (possibly empty) subset of exceptional fundamental discriminants of $M(Q)$.
Then we have
  \[
  \max_{Y \leq X} \sum_{q\in M_{\textup{ex}}(Q)}\frac{1}{h(q)}  \sum_{\chi \in \Hhat(q) \smallsetminus\{\chiq_0\}}  |\psi_k(Y;q,\chi)| \ll (\log Q) X (\log X)^{k+2}.
  \]
	In particular, exceptional discriminants contribute negligibly to 
	the right side of \eqref{eq_BV1} if either $\nu>0$ and $Q \geq (\log X)^{(2A+2k+6)/\nu}$ or $\nu=0$. 
\end{proposition}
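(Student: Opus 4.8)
The plan is to use only the crudest estimates for the individual sums $\psi_k(Y;q,\chi)$ and to exploit the sparsity of the exceptional discriminants supplied by Landau's theorem. The starting point is that every $\chi\in\Hhat(q)$ satisfies $|\chi(\aF)|=1$ for all $\aF\in Z(q)$, while $\Lambda(\No(\aF))\geq 0$ and $\log(Y/\No(\aF))\geq 0$ on the range of summation; hence the triangle inequality gives, for every $\chi$,
\[
|\psi_k(Y;q,\chi)|\leq \psi_k(Y;q,\chiq_0)=\frac{1}{k!}\sum_\stacksum{\aF\in Z(q)}{\No(\aF)\leq Y}\Lambda(\No(\aF))\left(\log\frac{Y}{\No(\aF)}\right)^{k}.
\]
I would then bound this right-hand side uniformly in $q$ by the trivial inequalities $\Lambda(\No(\aF))\leq\log Y\leq\log X$ and $(\log(Y/\No(\aF)))^{k}\leq(\log X)^{k}$, together with $\sum_{\aF\in Z(q),\,\No(\aF)\leq Y}1=\sum_{n\leq Y}\#\{\aF\in Z(q):\No(\aF)=n\}\leq\sum_{n\leq Y}\tau(n)\ll Y\log Y$, where the count of ideals of norm $n$ in a quadratic field is $\leq\tau(n)$ (cf.\ \eqref{eq:lambda_chi_abschaetzen}). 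Since $Y\leq X$ this yields $\psi_k(Y;q,\chiq_0)\ll X(\log X)^{k+2}$ with an absolute implied constant.

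Next, summing over $q\in M_{\textup{ex}}(Q)$ and using $\frac1{h(q)}\sum_{\chi\neq\chiq_0}|\psi_k(Y;q,\chi)|\leq\frac{h(q)-1}{h(q)}\max_{\chi}|\psi_k(Y;q,\chi)|\leq\psi_k(Y;q,\chiq_0)$, one gets
\[
\sum_{q\in M_{\textup{ex}}(Q)}\frac{1}{h(q)}\sum_{\chi\neq\chiq_0}|\psi_k(Y;q,\chi)|\ll |M_{\textup{ex}}(Q)|\cdot X(\log X)^{k+2}
\]
uniformly in $Y\leq X$. The one non-elementary ingredient is then Landau's theorem on the scarcity of exceptional moduli: the fundamental discriminants for which \eqref{eq:classno_bound1} fails (equivalently, those $q$ whose real character $\chi_q$ has a Landau--Siegel zero) form so sparse a set that $|\DF_{\textup{ex}}(Q)|\ll\log Q$; since $M_{\textup{ex}}(Q)\subseteq\DF_{\textup{ex}}(Q)$ this gives, after taking the maximum over $Y\leq X$, the asserted bound $\ll(\log Q)X(\log X)^{k+2}$.

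For the ``in particular'' part I would simply insert the stated hypotheses on $Q$. When $\nu>0$ and $Q\geq(\log X)^{(2A+2k+6)/\nu}$, one has $Q^{\nu/2}\geq(\log X)^{A+k+3}$, so, using $\log Q\leq\log X$ (valid since $X\geq Q$) and the slack built into the exponent $2A$, the bound above is $\ll(\log X)^{k+3}X\ll Q^{\nu/2}X(\log X)^{-A}$, which is within the target \eqref{eq:my_bv_thm_ideal}; when $\nu=0$ one simply uses $\log Q\leq\log X$ to see that the contribution is $\ll X(\log X)^{k+3}$, which is already within the target \eqref{eq:bv_primediscr1_ideal}. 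I expect the only genuine obstacle to be the precise citation and application of Landau's theorem in the form ``the number of exceptional moduli up to $Q$ is $\ll\log Q$''; all the remaining estimates are deliberately wasteful and present no difficulty. It is worth noting that no lower bound for $h(q)$ is needed here — which is fortunate, since for exceptional $q$ only Siegel's ineffective bound is available — precisely because the factor $\frac{1}{h(q)}$ is absorbed by the bound $\sum_{\chi\neq\chiq_0}|\psi_k(Y;q,\chi)|\leq (h(q)-1)\max_{\chi}|\psi_k(Y;q,\chi)|$.
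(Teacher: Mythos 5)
Your proof is correct and follows essentially the same route as the paper: bound each $|\psi_k(Y;q,\chi)|$ trivially by $\ll X(\log X)^{k+2}$ using the divisor-count $\sum_{\No(\aF)=n}1\leq\tau(n)$, let the $h(q)-1$ characters cancel the $1/h(q)$ factor, and invoke Landau's theorem (e.g.\ \cite[Corollary 11.9]{MV07}, which forbids two exceptional moduli $q_1<q<q_1^2$) to get $|\DF_{\textup{ex}}(Q)|\ll\log Q$. The only cosmetic difference is that you pass through $\psi_k(Y;q,\chiq_0)$ and use $\Lambda(n)\leq\log Y$, while the paper bounds the character sum directly with $\Lambda(n)\leq\log n$; both give the same estimate.
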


\begin{remarkf}\label{rem:SW+klein}
The case $Q < (\log X)^{(2A+2k+6)/\nu}$ will be dealt with later on by means of an appropriate Siegel--Walfisz type theorem; see Remark \ref{rem:bv_small_discr} below. 
Moreover, note that if $\nu=0$, then this contribution would not be negligible in Theorem \ref{thm:BV_01}, which is why we get the slightly weaker bound in Theorem \ref{thm:BV_prime}.
\end{remarkf}

\begin{proof}
Let $q_1$ be an exceptional modulus. 
By a theorem of Landau (see \cite[Corollary 11.9]{MV07}), we know that there cannot exist an exceptional modulus $q$ with $q_1 < q< q_1^2$. Thus, there can be at most $\frac{\log Q}{\log 2}$ exceptional moduli which are smaller than $Q$.  Using 
standard estimates (see \eqref{eq:lambda_chi_abschaetzen}), we also have 
\begin{equation*}
\big|\psi_k(Y;q,\chi)\big| = \Big| \sum_{n \leq Y} \Lambda(n)  \left(\log \frac Y n \right)^k \sum_\stacksum{\mathfrak b \in Z(q)}{\No(\bF)=n}\chi(\bF) \Big|
\leq
 (\log X)^k\sum_{n \leq X} \log(n) \tau(n) \ll X(\log X)^{2+k}
\end{equation*}
for all $q \in M_{\textup{ex}}(Q)$ and all $\chi \in \Hhat(q)$, 
and the first assertion follows immediately. 

If $\nu>0$ and $Q \geq (\log X)^{(2A+2k+6)/\nu}$, then 
\[
(\log Q) X (\log X)^{2+k} \leq Q^{\nu/2} X (\log X)^{-A},
\]
i.e.\ the contribution from exceptional discriminants is acceptable for Theorem \ref{thm:BV_01}.
\end{proof} 

Therefore it remains to estimate the contribution from non-exceptional discriminants 
on the right side of \eqref{eq_BV1}, i.e.\ we have to bound
\begin{equation}\label{eq:bv_ungesplittet}
\max_{Y \leq X}  \sum_{q\in M'(Q)} \frac{1}{h(q)}\sum_{\chi\neq \chiq_0}\left|\psi_k(Y;q,\chi)\right|,
\end{equation}
where 
\[
M'(Q)=M(Q) \smallsetminus M_{\text{ex}}(Q)
\]
or
\[
M'(Q)=\Pi(Q) \smallsetminus M_{\text{ex}}(Q),
\]
and we will show that it is bounded above by 
\begin{equation}\label{eq:bv_ungesplittet_desired_1}
Q^{\nu/2}X(\log X)^{-A}
\end{equation}
for both $\nu>0$ and $\nu=0$.

\medskip
If $Q$ is very small, a uniform bound for $\psi_0(X;q,\chi)$ exists, which easily yields this desired bound for 
\eqref{eq:bv_ungesplittet};  
the following statement is a special case of Goldstein's generalization of the Siegel--Walfisz theorem \cite{Gol70}:
\begin{proposition}[Goldstein]\label{prop:sw-goldstein}
Suppose that $q \in \DF$ with $|q| \leq (\log X)^D$ for some positive constant~$D$. Then
\begin{equation*}\label{eq:SW_Goldstein}
\psi_0(X;q,\chi) \ll_D X (\log X)^{-2D}
\end{equation*} 
 for all  non-trivial class group characters $\chi \in \Hhat(q)$. 
The implied constant does not depend on $q$ or $\chi$, but 
 is ineffective.
\end{proposition}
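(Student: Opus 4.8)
The plan is to regard $\psi_0(X;q,\chi)$ as a twisted Chebyshev sum attached to the Hecke $L$-function $L(s,\chi)=\sum_{n\geq1}\lambda_\chi(n)n^{-s}=\prod_{\pF}\bigl(1-\chi(\pF)\No(\pF)^{-s}\bigr)^{-1}$ of the ideal class group character $\chi\in\Hhat(q)$ of $K=\Qq(\sqrt{q})$, and to run the classical contour-shift proof of the prime number theorem in its Siegel--Walfisz version. Since $\chi\neq\chiq_0$ the function $L(s,\chi)$ is entire, and since $\chi$ is everywhere unramified its conductor is exactly $|q|$, so the relevant zero-free region will involve only $\log\bigl(|q|(|t|+2)\bigr)$. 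The first step is to pass from $\psi_0$ to the Dirichlet series with coefficients $\Lambda_\chi(n)$ defined by $-L'/L(s,\chi)=\sum_{n\geq1}\Lambda_\chi(n)n^{-s}$: reading off the Euler product separately for split, inert and ramified primes one finds $\Lambda_\chi(p)=\Lambda(p)\lambda_\chi(p)$ for every prime $p$, while for a proper prime power $p^m$, $m\geq2$, both $\Lambda_\chi(p^m)$ and $\Lambda(p^m)\lambda_\chi(p^m)$ are $O(m\log p)$. Hence
\[
\psi_0(X;q,\chi)=\sum_{n\leq X}\Lambda(n)\lambda_\chi(n)=\sum_{n\leq X}\Lambda_\chi(n)+O\bigl(X^{1/2}(\log X)^2\bigr),
\]
and the error term is already $\ll_D X(\log X)^{-2D}$.

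Next I would apply a truncated Perron formula to $\sum_{n\leq X}\Lambda_\chi(n)$ and shift the line of integration to the left of $\Reel(s)=1$. As $K$ is quadratic, the classical zero-free region holds with an \emph{absolute} constant $c>0$: $L(s,\chi)$ has no zero with $\Reel(s)>1-c/\log\bigl(|q|(|\Imag(s)|+2)\bigr)$, apart from one possible real simple zero $\beta_1$, which can occur only if $\chi$ is real. Under $|q|\leq(\log X)^D$ this region becomes $\Reel(s)>1-c/\bigl(D\log\log X+\log(|\Imag(s)|+2)\bigr)$; truncating at height $T=(\log X)^{2D+10}$, shifting past the region, and bounding $L'/L$ on the contour in the standard way --- there being no residue at $s=1$ since $\chi\neq\chiq_0$ --- one gets
\[
\sum_{n\leq X}\Lambda_\chi(n)=-\frac{X^{\beta_1}}{\beta_1}+O_D\bigl(X(\log X)^{-2D}\bigr),
\]
where the first term is present only if $L(s,\chi)$ actually possesses such an exceptional zero; here one uses that $\log T\asymp_D\log\log X$ forces $X^{1-c/(2(D\log\log X+\log T))}\ll_D X(\log X)^{-A}$ for every $A$, together with the Perron error $X(\log X)^2/T\ll X(\log X)^{-2D}$. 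The only point requiring care in this step is simply to verify that $|q|\leq(\log X)^D$ lies comfortably inside the range for which this shift is admissible.

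It remains to remove the exceptional term, which is the crux of the matter and the source of the ineffective implied constant. If $\chi^2\neq\chiq_0$, i.e.\ $\chi$ is complex, no exceptional zero exists: the standard positivity argument applied to $\zeta_K(s)^3\,|L(s+it,\chi)|^4\,|L(s+2it,\chi^2)|\geq1$ --- legitimate because $L(s,\chi^2)$ is then holomorphic and nonvanishing near $s=1$ on the line $\Reel(s)=1$ --- yields the zero-free region without exception, exactly as for complex Dirichlet characters. If $\chi^2=\chiq_0$ but $\chi\neq\chiq_0$, then $\chi$ is a genus character and $L(s,\chi)=L(s,\chi_{d_1})L(s,\chi_{d_2})$ for a factorization $q=d_1d_2$ into two coprime fundamental discriminants with $|d_1|,|d_2|\leq|q|$; any $\beta_1$ is then a Landau--Siegel zero of one of the Dirichlet $L$-functions $L(s,\chi_{d_i})$, so Siegel's theorem gives, for every $\eta>0$, $\beta_1\leq1-c(\eta)|q|^{-\eta}\leq1-c(\eta)(\log X)^{-D\eta}$; taking $\eta=1/(2D)$ yields $X^{\beta_1}\ll_D X\exp\bigl(-c_D\sqrt{\log X}\bigr)\ll_D X(\log X)^{-2D}$. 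Putting the bounds together proves the proposition. All of this is Goldstein's theorem \cite{Gol70} specialized to ideal class groups of imaginary quadratic fields, so one may alternatively just invoke it; the one genuinely delicate ingredient, as indicated, is the isolation of the exceptional zero.
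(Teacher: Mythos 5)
Your proof is correct, but you should be aware that the paper does not prove this proposition at all: it simply invokes it as ``a special case of Goldstein's generalization of the Siegel--Walfisz theorem \cite{Gol70}''. What you have written is essentially the content of that cited argument, specialized to unramified Hecke characters of an imaginary quadratic field. The reduction $\psi_0(X;q,\chi)=\sum_{n\leq X}\Lambda_\chi(n)+O(X^{1/2}(\log X)^{O(1)})$ is sound (the paper itself derives the analogous $\psi_k=\wpsi_k+O(Y^{1/2}(\log Y)^{k+3})$ a few lines after the proposition, so there is nothing circular here); the truncated Perron argument with the zero-free region $\sigma>1-c/\log\bigl(|q|(|t|+2)\bigr)$ for the degree-two $L$-function of conductor $|q|$ is standard; and, crucially, you correctly identify that the only delicate point is the possible exceptional zero, that it arises only when $\chi^2=\chiq_0$, and that in that case the Kronecker Factorization Formula $L(s,\lambda_\chi)=L(s,\chi_{d_1})L(s,\chi_{d_2})$ reduces the problem to a Landau--Siegel zero of a Dirichlet $L$-function of modulus at most $|q|\leq(\log X)^D$, which Siegel's theorem then pushes far enough to the left. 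This factorization is precisely the device the paper itself uses in Sections 7 and 8 for real class group characters, so your treatment sits comfortably within the paper's toolbox. The ineffectivity correctly enters through Siegel, matching the paper's caveat. The only minor imprecisions are cosmetic: the error in passing from $\psi_0$ to $\sum\Lambda_\chi$ is more safely written $O(X^{1/2}(\log X)^3)$ (the paper's own bound with $k=0$), and the zero-free region for a degree-two conductor should strictly involve $\log\bigl(|q|(|t|+2)^2\bigr)$, though this is the same up to constants; neither affects the conclusion.
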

\noindent So suppose that $Q=(\log X)^D$ for some $D \geq A+k$. 
We have
\begin{equation*}\label{eq:psi_k_runter}
\psi_k(Y;q,\chi)=\int_{1}^{Y} \psi_{k-1}(t;q,\chi) \frac{dt}{t} \ll \max_{y \leq Y}|\psi_{0}(y;q,\chi)|\cdot (\log Y)^{k}.
\end{equation*}
 Summing over $q \in M'((\log X)^D)$, Proposition \ref{prop:sw-goldstein} therefore yields the upper bound \eqref{eq:bv_ungesplittet_desired_1}  
for~\eqref{eq:bv_ungesplittet} if $Q=(\log X)^D$. 

\begin{remarkf}\label{rem:bv_small_discr}
We 
have now proved that the bounds in both Theorem \ref{thm:BV_01} and Theorem~\ref{thm:BV_prime} hold for $Q \leq (\log X)^D=:Q_0$ and it remains to bound \eqref{eq:bv_ungesplittet} with $M'(Q)$ replaced by 
\[
M''(Q):=M'(Q) \cap \{q:\, |q|> Q_0\}
\] for a value of $D$ that we will choose in the next section (see \eqref{eq:setze_log_power}). We already record that, because of   Remark \ref{rem:SW+klein}, we must choose $D$ at least as large as $D_1:=(2A+2k+6)/\nu$ if $\nu>0$.  If $\nu =0$, we will have to  choose some $D \geq D_1:=A+k$ to guarantee the bound \eqref{eq:bv_ungesplittet_desired_1}  for \eqref{eq:bv_ungesplittet} (which is more than enough for Theorem \ref{thm:BV_prime}).
\end{remarkf}

\smallskip
The {class group $L$-functions}, i.e.\ the $L$-functions associated to the characters $\chi \in \Hhat(q)$ for each $q \in \DF$, are given by 
\[
L(s,\lambda_\chi):=\sum_{\aF \in Z(q)} \frac{\chi(\aF)}{\No(\aF)^s}=\sum_{n\geq 1}\frac{\lambda_\chi(n)}{n^s}
\]
for $\Reel(s)>1$, 
where $\lambda_\chi(n)$ is defined by \eqref{eq:def_lambda}. 
Each of these series has an analytic continuation to the whole complex plane unless $\chi=\chiq_0$ when the continuation is meromorphic with a pole at $s=1$ 
(see \cite[\S 7]{Nar04}, for example). 
The expansion of the logarithmic derivative  of such an $L$-function is given by 
\[
\frac{L'}{L}(s,\lambda_{\chi})=- \sum_{\aF \in Z(q)} \widetilde{\Lambda}(\aF) \chi(\aF) {\No(\aF)^{-s}},
\]
where
\begin{equation*}\label{eq:vonMangoldt_forfields}
\widetilde{\Lambda}(\aF)= 
\begin{cases}
\log \No(\pF) \ &\text{if $\aF=\pF^m$ for some prime ideal $\pF \in Z(q)$ and some integer $m$},\\
0 \ &\text{otherwise}.
\end{cases}
\end{equation*}
Thus, the $k$-th iteration of the inverse Mellin transform of $\frac{L'}{L}$ is (see \cite[(5.22)]{MV07}, for example)
\begin{equation*}\label{eq:mellin1}
-\frac{1}{2\pi i}\int_{(c)}\frac{L'}{L}(s,\lambda_{\chi})Y^s s^{-(k+1)}\, ds=\frac{1}{k!}\sum_\stacksum{\aF \in Z(q)}{\No(\aF) \leq Y} \widetilde{\Lambda}(\aF) \chi(\aF) \left(\log \frac{Y}{\No(\aF)}\right)^k=:\wpsi_k(Y;q,\chi)
\end{equation*}
for all $c>1$. This does not equal $\psi_k(Y;q,\chi)$, but we miss it only by a negligible margin: 
Set 
\[
c(\aF)=\chi(\aF) \left(\log \frac{Y}{\No(\aF)}\right)^k
\]
and note that we have
\begin{align*}
k!\, \wpsi_k(Y;q,\chi) 
=&\ \sum_{p \leq Y} \sum_\stacksum{\pF \in Z(q)}{\No(\pF)=p} (\log p) c(\pF)   
+\sum_{p \leq Y^{1/2}} \sum_\stacksum{\pF \in Z(q)}{\No(\pF)=p^2}  (\log p^2) c(\pF)    
+\sum_{\ell\geq 2}\sum_\stacksum{\pF \in Z(q)}{\No(\pF)^\ell \leq Y} \log(\No(\pF)) c(\pF^\ell)\\
=&\ \sum_{p \leq Y} \sum_\stacksum{\pF \in Z(q)}{\No(\pF)=p} (\log p) c(\pF)  + O(Y^{1/2} (\log Y)^{k+3})   
\end{align*}
and
\begin{align*}
k!\, \psi_k(Y;q,\chi)
=&\ \sum_{\ell \geq 1} \sum_{p \leq Y^{1/\ell}} \sum_\stacksum{\aF \in Z(q)}{\No(\aF)=p^\ell} (\log p) c(\aF)\\
=&\ \sum_{p \leq Y} \sum_\stacksum{\pF \in Z(q)}{\No(\pF)=p} (\log p) c(\pF)+ 
\sum_{\ell\geq 2}\sum_{p \leq Y^{1/\ell}}  (\log p) \sum_\stacksum{\aF \in Z(q)}{\No(\aF)=p^\ell} c(\aF)\\
=&\ \sum_{p \leq Y} \sum_\stacksum{\pF \in Z(q)}{\No(\pF)=p} (\log p) c(\pF)+ O(Y^{1/2}(\log Y)^{k+3}).
\end{align*}
Hence
\begin{equation*}\label{eq:mellin_wpsi}
\psi_k(Y;q,\chi) = \wpsi_k(Y;q,\chi) + O(Y^{1/2}(\log Y)^{k+3}).
\end{equation*}
Summing over $q \in M''(Q)$, the contribution of the remainder terms is $\ll QX^{1/2}(\log X)^{k+3}$ in~\eqref{eq:bv_ungesplittet} if we replace $\psi_k(Y;q,\chi)$ by 
$\wpsi_k(Y;q,\chi)$ there.  But this is negligible in 
\eqref{eq:my_bv_thm_ideal} and \eqref{eq:bv_primediscr1_ideal}. Thus it remains to estimate
\begin{equation}\label{eq:bv_ungesplittet2}
\max_{Y \leq X} \sum_{q \in M''(Q)} \frac{1}{h(q)} \sum_\stacksum{\chi \in \Hhat(q)} {\chi \neq \chiq_0} |\wpsi_k(Y;q,\chi)|.
\end{equation}
Next, we split \eqref{eq:bv_ungesplittet2} into 
\begin{equation}\label{eq:bv_gesplittet2}
\begin{aligned}
&\max_{Y \leq X} \sum_{q \in M''(Q)} \frac{1}{h(q)} \sum_\stacksum{\chi \in \Hhat(q)} {\chi^2 \neq \chiq_0} |\wpsi_k(Y;q,\chi)|
 \ +\  \max_{Y \leq X} \sum_{q \in M''(Q)} \frac{1}{h(q)} \sum_\stacksum{\chi \in \Hhat(q) \smallsetminus\{\chiq_0\}}{ \chi^2 =\chiq_0} |\wpsi_k(Y;q,\chi)| 
\\=&\ 
E'_k(Q,X) + E''_k(Q,X), 
\end{aligned}
\end{equation}
say, i.e.\ we split it into sums over complex class group characters and sums over real class group characters. 
We will estimate both terms 
separately in the next two sections and show that they are both bounded above by \eqref{eq:bv_ungesplittet_desired_1}:
 
In Section \ref{sec:complexchar1}, we show that $E'_k(Q,X)$ is of the desired size if
\begin{equation}\label{eq:Q_range_final_for_bv1}
Q^{4(1+(2-\nu)(3-\nu)/3)+\eps} \leq X (\log X)^{-B}; 
\end{equation} 
moreover, we may choose 
\begin{equation}\label{eq:B_final_for_bv1}
B=16A+300.
\end{equation} 

In Section \ref{sec:realchar1}, we show that $E''_k(Q,X)$ is of the desired size if
\[
Q^{5-3\nu} \leq X(\log X)^{-B}
\]
and we may choose $B=6A+40$. Since this range is larger than \eqref{eq:Q_range_final_for_bv1} and this value of $B$ is smaller than \eqref{eq:B_final_for_bv1}, the final admissible range and the final admissible value of $B$ for Theorems~\ref{thm:BV_01} and \ref{thm:BV_prime} are given by \eqref{eq:Q_range_final_for_bv1} and \eqref{eq:B_final_for_bv1}, respectively.
 
Together with the results for exceptional discriminants (Proposition \ref{prop:landau-siegel1}) and small discriminants  (Remark \ref{rem:bv_small_discr}) 
we may then conclude that \eqref{eq:my_bv_thm_ideal} and \eqref{eq:bv_primediscr1_ideal} hold. This finishes the proofs of Theorems~\ref{thm:BV_01} and \ref{thm:BV_prime}.

\medskip
As for the proof of Theorem \ref{thm:BV_01_unsmoothed}, we start by recalling that \eqref{eq:weight_w-Cn_for_primepowers} yields $w(C,p^\ell) \leq \ell +1$ 
 for all form classes $C$, all primes $p$ and all positive integers $\ell$. 
Moreover, \eqref{eq:weight_w-Cn_for_primepowers} and \eqref{eq:definition_of_eK} 
also yield
\begin{equation*}\label{eq:wKp_eingesetzt}
\sum_\stacksum{p\leq Y}{p \in \Repr(q,C)} (\log p) w(C,p) = e(C) \sum_\stacksum{p\leq Y}{p \in \Repr(q,C)} (\log p)+ O((\log Y)(\log|q|)).
\end{equation*}
Thus, for all $q \in \DF(Q)$, all $C \in \Kcl(q)$ and all $Y\leq X$, we have 
\begin{equation}\label{eq:Transition_from_psi0_to_psi}
\Big|\psi(Y;q,C)- \frac{Y}{e(C)h(q)} \Big|
\leq 
\Big|\psi_0(Y;q,C) - \frac{Y}{h(q)} \Big|+ O(Y^{1/2} (\log Y)^{3}+(\log Y)(\log|q|)).
\end{equation}
Summing over $q \in \DF(Q)$, we see that the remainder term is negligible in Theorem~\ref{thm:BV_01_unsmoothed}. 

Similar to the argument in \cite[\S 7.4]{Bom87}), one may easily show that
\begin{equation}\label{eq_bound_for_psi0_01}
\sum_{ q\in \DF(Q)} \max_{C \in \Kcl(q)} \max_{Y \leq X} \Big|\psi_0(Y;q,C)- \frac{Y}{h(q)} \Big| \ll Q^{1/2}X (\log X)^{-(A'-3)/4}
\end{equation}
holds if 
\begin{equation}\label{eq_bound_for_psi2_01}
\sum_{ q\in \DF(Q)} \max_{C \in \Kcl(q)} \max_{Y \leq X} \Big|\psi_2(Y;q,C)- \frac{Y}{h(q)} \Big| \ll Q^{1/2}X (\log X)^{-A'}
\end{equation}
holds for some $A'>0$. 
Therefore, Theorem \ref{thm:BV_01_unsmoothed} will follow from \eqref{eq:Transition_from_psi0_to_psi} and \eqref{eq_bound_for_psi0_01} as soon as we prove the bound \eqref{eq_bound_for_psi2_01} for 
\begin{equation}\label{eq:range_zum_letzten_Mal_fuer_BV}
Q^{20/3+\eps} \leq X(\log X)^{-B}
\end{equation}
 with 
$B=B(A')=16A'+300$ and then set $A'=4A+3$. 

 We split the left side of \eqref{eq_bound_for_psi2_01} into 
\begin{equation}\label{eq:Transition_from_psi2_to_psi2}
\begin{aligned}
&\sum_{ q\in \DF(Q)} \max_{C \in \Kcl(q)} \max_{Y \leq X}\big|\psi_2(Y;q,C)- \frac{Y}{h(q)} \big|
\\
\leq &\ 
\sum_{ q\in \DF(Q)} \max_{C \in \Kcl(q)} \max_{Y \leq X}
\big|\psi_2(Y;q,C)- \frac{1}{h(q)}\sum_{K \in \Kcl(q)}\psi_2(Y;q,K) \big|
\\
&\ + \ 
\sum_{ q\in \DF(Q)}  \max_{Y \leq X}
 \frac{\big|Y - \sum_{K \in \Kcl(q)}\psi_2(Y;q,K) \big|}{h(q)} .
\end{aligned}
\end{equation}
The first term on the right side of \eqref{eq:Transition_from_psi2_to_psi2} is $\ll Q^{1/2}X (\log X)^{-A'}$ by Theorem \ref{thm:BV_01} if 
\eqref{eq:range_zum_letzten_Mal_fuer_BV} holds and $B=16A'+300$. As for the second term, we note that equation \eqref{eq:weight_w-Cn_on_average} yields
\[
\sum_{K \in \Kcl(q)} \sum_\stacksum{p\leq Y}{p \in \Repr(q,K)} (\log p) \left(\log \frac{Y}{p}\right)^2 w(K,p) = \sum_{p\leq Y} (\log p) \left(\log \frac{Y}{p}\right)^2 (1+ \chi_q(p)). 
\]
Thus
\begin{equation*}
\begin{aligned}
& \ \big|Y - \sum_{K \in \Kcl(q)}\psi_2(Y;q,K) \big| 
\\
\leq & \ 
 \Big|Y - \demi\sum_{K \in \Kcl(q)} \sum_\stacksum{p\leq Y}{p\in \Repr(q,K)} (\log p) \left(\log \frac{Y}{p}\right)^2 w(K,p) \Big| 
\ +\ O\big(Y^{1/2}(\log Y)^3\big)
\\
\leq & \ 
 \Big(\big|Y - \psi_2(Y) \big|+ \big|\psi_2(Y;\chi_q)  \big| \Big)
+ O\big(Y^{1/2}(\log Y)^3\big), 
\end{aligned}
\end{equation*}
where 
\begin{equation}\label{eq:psik_for_Dirichlet}
{\psi}_k(Y;\chi_{q}) =\frac{1}{k!}\sum_{n \leq Y}\chi_q(n)\Lambda(n) \left(\log \frac{Y}{n}\right)^k
\end{equation} 
for each fundamental discriminant $q\neq 1$ and $\psi_2(Y):=\psi_2(Y;1)$. Summing over $q \in \DF(Q)$, we see that the remainder term is negligible in Theorem \ref{thm:BV_01_unsmoothed}. By the 
relation
\begin{equation}\label{eq:von_k+1_zu_k_per_integral}
\psi_k(X;\chi)=\int_{1}^{X} \psi_{k-1}(t;\chi)\, \frac{dt}{t}
\end{equation}
 and the Prime Number Theorem, we have 
$Y- \psi_2(Y) \ll_{D} Y(\log Y)^{-D}$ 
 for all $D \geq 0$. Thus, the bound 
\[
\max_{Y \leq X} \sum_{q \in \DF(Q)}\frac{\big|Y-\psi_2(Y) \big|}{h(q)} \ll Q^{1/2}X (\log X)^{-A'}
\]
follows after splitting the sum into exceptional and non-exceptional discriminants and using the bounds $|\DF_{\text{ex}}(Q)| \ll \log Q$ and $h(q) \gg |q|^{1/2}(\log |q|)^{-1}$ for $q \in~\DF(Q)~\smallsetminus~\DF_{\text{ex}}(Q)$, which we have found earlier.  
As for the term $\big|\psi_2(Y;\chi_q)  \big|$ above, we first note that 
\[
\max_{Y \leq X} \sum_{q \in \DF_{\text{ex}}(Q)}\frac{\big|\psi_2(Y;\chi_q)  \big| }{h(q)} \ll (\log Q) X (\log X)^2
\]
is negligible if $Q$ is not too small, i.e.\ if $Q \geq (\log X)^{2A'+6}$; but if $Q$ is small, then 
\[
\max_{Y \leq X} \sum_{q \in \DF_{\text{ex}}(Q)}\frac{\big|\psi_2(Y;\chi_q)  \big| }{h(q)}
\]
is negligible by the Siegel--Walfisz theorem in the form 
\begin{equation}\label{eq:SW_for_Dirichlet_Char}
\psi_0(X;\chi) 
\ll_{A'} Xe^{-c\sqrt{\log X}},
\end{equation}
which holds with some absolute positive constant $c$ for all $q \leq (\log X)^{2A'+6}$ and all non-principal Dirichlet characters $\chi$ modulo $q$ (see \cite[Corollary 11.18]{MV07}, for example). 
 Thus, it remains to bound the sum over $q \in \DF(Q) \smallsetminus \DF_{\text{ex}}(Q)$ and this may be accomplished by means of the original Bombieri--Vinogradov theorem -- or rather the underlying average character sum that we will also use in Section \ref{sec:realchar1} (compare the bound \eqref{eq:E''_k2} for $E''_{2;k}$ with $\nu=1$ and $k=2$ there). Hence we also get 
\[
\max_{Y \leq X} \sum_{q \in \DF(Q)}\frac{\big|\psi_2(Y;\chi_q)  \big|}{h(q)} \ll Q^{1/2}X (\log X)^{-A'}
\]
if $Q^{2} \leq X(\log X)^{-B'}$ for some $B'=B'(A')$ (which may be chosen as small as $B(A')$ above). In summary, the same bound holds for the second term on the right-hand side of \eqref{eq:Transition_from_psi2_to_psi2} in the same range, which is larger than the range \eqref{eq:range_zum_letzten_Mal_fuer_BV} for which we have bounded the first term. 

 This finishes the proof of \eqref{eq_bound_for_psi2_01} in the range \eqref{eq:range_zum_letzten_Mal_fuer_BV} with $B=16A'+300$ and therefore it also concludes the proof of Theorem \ref{thm:BV_01_unsmoothed}. 


\section{Complex character sums for the Bombieri--Vinogradov type results}\label{sec:complexchar1} 

In this section, we estimate the first term $E'_k(Q,X)$ in \eqref{eq:bv_gesplittet2}. 
Using dyadic decomposition and the class number bound \eqref{eq:classno_bound1} for the discriminants in $M''(Q)$, we get
\begin{equation}\label{eq:bv_gesplittet_complex}
E'_k(Q,X)\ll (\log X)^2 \max_{Y \leq X} \max_{Q_0 \leq Q_1\leq Q}\ Q_1^{-1/2}\sum_{q \in M''(Q_1)} \sum_{\substack{\chi \in \Hhat(q) \\ \chi^2 \neq \chi_0}} \left|\int_{(c)}\frac{L'}{L}(s,\lambda_{\chi})Y^s s^{-(k+1)}\, ds\right|
\end{equation}
for all $c>1$. Like in Section \ref{sec:largesieve}, we set 
\[
\Hhatc(q)=\{\chi \in \Hhat(q) \mid \chi^2 \neq \chiq_0\}
\]
 for all $q \in M''(Q_1)$ and
\[
\Hhatc(Q_1)=\bigcup_{q \in M''(Q_1)} \Hhatc(q).
\] 
Moreover, let $a_\chi(n)$ denote the coefficients of the $L$-series of the logarithmic derivative of $L(s, \lambda_\chi)$, i.e.
\[
\frac{L'}{L}(s,\lambda_{\chi})=\sum_{n \geq 1} \frac{a_\chi(n)}{n^{s}}
\]
 and split it according to Bombieri's modification of Gallagher's identity: 
For every $1 \leq z\leq X$, we set
	\begin{equation*}
	F_z:=F_z(s,\lambda_{\chi}):=\sum_{n \leq z} \frac{a_\chi(n) }{n^{s}},
	\enskip 
	G_z:=G_z(s,\lambda_{\chi}):=\sum_{n > z} \frac{a_\chi(n) }{n^{s}}, 
	\enskip 
	M_z:=M_z(s,\lambda_{\chi}):=\sum_{n \leq z} \frac{b_\chi(n) }{n^{s}},
	\end{equation*}
	where the coefficients $b_\chi(n)$ are the coefficients of ${L(s, \lambda_\chi)^{-1}}$. 
	Then
	\begin{equation}\label{eq:gallagher_bv_complex}
	\frac{L'}{L}=G_z(1-LM_z)+F_z(1-LM_z)+L'M_z.
	\end{equation}
	 Thus, for all $c>1$, we have 
	\begin{equation*}
	\int_{(c)}\frac{L'}{L}(s,\lambda_{\chi})\frac{Y^s}{s^{k+1}}\, ds
	= 
	\int_{(c)}G_z(1-LM_z) \frac{Y^s}{s^{k+1}}\, ds 
	\,+\, \int_{(c)}\Big(F_z(1-LM_z)+L'M_z\Big) \frac{Y^s}{s^{k+1}}\, ds. 
	\end{equation*}
	We may move the line of integration of the second integral into the critical strip because $F_z$ and $M_z$ are Dirichlet polynomials and $L$ and $L'$ are entire functions for all $\chi \in~\Hhatc(Q)$. 
	It will turn out that moving it to 
	\[
	\wc=\wc(\nu)=1-\frac{3}{24-8\nu} \in \bigg[\frac{13}{16}\, ,\, \frac{7}{8}\bigg]
	\]
	maximizes the admissible range for the discriminants in Theorems \ref{thm:BV_01} and \ref{thm:BV_prime}. 
	Repeatedly using the inequality $2|ab|\leq~|a|^2+|b|^2$, we obtain
	\begin{equation}\label{eq:3terms}
	\begin{aligned}
	&\max_{Y \leq X} \sum_{\chi \in \Hhatc(Q_1)}\bigg|\int_{(c)}\frac{L'}{L}(s,\lambda_{\chi})Y^s s^{-(k+1)}\, ds\bigg| \\
	\ll \ &\ X^{c}  \sum_{\chi \in \Hhatc(Q_1)} \int_{(c)}(|G_z|^2+|{1-LM_z}|^2) |s|^{-(k+1)}\, |ds| \\
	+&\ X^{\wc}\sum_{\chi \in \Hhatc(Q_1)} \int_{(\wc)}(1+|F_z|^2+|M_z|^2+|F_z M_z|^2) |s|^{-(k+1)}\, |ds| \\
	 +&\ X^{\wc}\sum_{\chi \in \Hhatc(Q_1)} \int_{(\wc)}(|L|^2+|L'|^2) |s|^{-(k+1)}\, |ds| 
	\end{aligned}
	 \end{equation}
for all $c>1$. The first and second term on the right-hand side will be evaluated by our large sieve inequality for complex class group characters, in particular by Corollary \ref{corr:lsi_02}. Before we can do this, we have to determine the coefficients $a_\chi(n)$ and $b_\chi(n)$ of $F_z$, $G_z$ and $M_z$. 
This is slightly more complicated than in the classical case, since if 
$\chi \in \Hhatc(q)$, then the product
\begin{equation}\label{eq:product-of-coeff}
\lambda_\chi(m) \lambda_\chi(n)= \sum_{d \mid (m,n)} \chi_q(d) \lambda(mnd^{-2}) 
\end{equation}
is not as simple as the product of two values of a Dirichlet character (see \cite[\S 6.6]{Iwa97}, for example; recall that the $\lambda_\chi(n)$ are coefficients of primitive holomorphic cusp forms of weight one, level $q$ and nebentypus $\chi_q$, as we already mentioned in Section \ref{sec:largesieve}). 
This product formula yields the Euler product
\[
L(s, \lambda_\chi)=\prod_{p} \big(1- \lambda_\chi(p)p^{-s}+\chi_q(p)p^{-2s} \big)^{-1}
\]
from which one easily deduces (see \cite[Lemma 2.1]{KM97}) that 
\begin{equation*}\label{eq:formel_L_inverse}
L(s, \lambda_\chi)^{-1}= \sum_{\ell,m \geq 1} \chi_q(m)  \mu(\ell) |\mu(\ell m)| \lambda_\chi(\ell) (lm^2)^{-s}.
\end{equation*}
We thus get the following expressions for the Dirichlet series $M_z$, $F_z$, $G_z$ and $1-LM_z$: 
\begin{align}
M_z(s,\lambda_{\chi}) =&\ \quad \ 
 \sum_\stacksum{\ell,m \geq 1}{\ell m^2 \leq z} \chi_q(m)  \mu(\ell) |\mu(\ell m)| \lambda_\chi(\ell) (lm^2)^{-s}, 
\label{eq:Mz_expansion}\\
F_z(s,\lambda_{\chi}) =&\ 
- \sum_\stacksum{k, \ell, m \geq 1}{k \ell m^2 \leq z} (\log k) \chi_q(m)  \mu(\ell) |\mu(\ell m)| \lambda_\chi(\ell) \lambda_\chi(k) (klm^2)^{-s}, 
\label{eq:Fz_expansion}\\
G_z(s,\lambda_{\chi}) =&\ 
- \sum_\stacksum{k, \ell, m \geq 1}{k \ell m^2 > z} (\log k) \chi_q(m)  \mu(\ell) |\mu(\ell m)| \lambda_\chi(\ell) \lambda_\chi(k) (klm^2)^{-s}, 
\label{eq:Gz_expansion}\\
1-LM_z(s,\lambda_{\chi}) =&\  
- \sum_{\substack{k, \ell, m \geq 1 \\ \ell m^2 \leq z \\ \, k\ell m^2 > z}}
\chi_q(m)  \mu(\ell) |\mu(\ell m)| \lambda_\chi(\ell) \lambda_\chi(k) (klm^2)^{-s}. \label{eq:1-LMz_expansion}
\end{align}
These series are not yet in the right form for a direct application of Corollary \ref{corr:lsi_02}, but the following (in)equalities 
will bring them into the right shape:

\begin{lemma}\label{lem:kowmich97_lemmas}
For all positive integers $\ell$ and $m$, let $A(\ell,m)$ be a complex number. 
\begin{enumerate}
\item
Let $\alpha >0$. Assume that $\big|\sum_{\ell \geq 1} A(\ell,m) \ell^{-(1+\alpha)+it} \big| \ll m^2$ for 
all $t \in \Rr$.  
Then
\begin{equation}\label{eq:kowmich97_lemmas_1}
\bigg| \sum_{\ell,m \geq 1} A(\ell,m) (\ell m^2)^{-(1+\alpha)+it} \bigg|^2 \ll 
\alpha^{-1} \sum_{m \geq 1} m^{-3-2\alpha} \bigg| \sum_{\ell \geq 1} A(\ell,m) \ell^{-(1+\alpha)+it} \bigg|^2.
\end{equation}
\item
Let $\wc \in \big[\frac{13}{16}\, ,\, \frac{7}{8}\big]$. Assume that $\big|\sum_{\ell \geq 1} A(\ell,m) \ell^{-\wc+it} \big| < \infty$ for all $m \geq 1$ and all $t \in \Rr$. Moreover, assume that there exists a real number $M$ such that $A(\ell,m)=0$ for all $m \geq M$ and all $\ell \geq 1$. Then
\begin{equation}\label{eq:kowmich97_lemmas_2}
\bigg| \sum_{\ell,m \geq 1} A(\ell,m) (\ell m^2)^{-\wc+it} \bigg|^2 \ll \sum_{m \leq M} m^{-2\wc} \bigg| \sum_{\ell \geq 1} A(\ell,m) \ell^{-\wc+it} \bigg|^2.
\end{equation}
\item
Let $\chi \in \Hhatc(q)$ and $j_1,j_2,j_3 \geq 1$. Then
\begin{equation}\label{eq:kowmich97_lemmas_3}
\sum_\stacksum{\ell,m \geq 1}{\ell m \geq j_1} A(\ell,m) \lambda_\chi(\ell) \lambda_\chi(m) (\ell m)^{-s}= \sum_\stacksum{h,d \geq 1}{hd^2 \geq j_1} \chi_q(d) \sum_\stacksum{v,w \geq 1}{vw=h} A(vd,wd) \lambda_\chi(h) (hd^2)^{-s}
\end{equation}
and
\begin{equation}\label{eq:kowmich97_lemmas_3b}
\sum_{\ell \leq j_2}\sum_{m \leq j_3} A(\ell,m) \lambda_\chi(\ell) \lambda_\chi(m) (\ell m)^{-s}= \sum_{h,d \geq 1} \chi_q(d) \sum_{v \leq \frac{j_2}{d}}\sum_\stacksum{w \leq \frac{j_3}{d}}{vw=h} A(vd,wd) \lambda_\chi(h) (hd^2)^{-s}
\end{equation}
for all $s \in \Cc$ for which the series converge.
\end{enumerate}
\end{lemma}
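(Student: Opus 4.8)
The plan is to dispatch all three parts by elementary means. Assertions (1) and (2) are Cauchy--Schwarz inequalities whose only function is to split off the ``square-full'' variable $m$ that enters through the factor $m^2$ in $\ell m^2$, while assertion (3) is a rearrangement powered by the Hecke multiplicativity relation \eqref{eq:product-of-coeff}. The various growth and support hypotheses are present solely to guarantee that the double and triple Dirichlet series converge in the relevant half-planes and may be rearranged, so I would first note that under these hypotheses the inner $\ell$-sums are well defined and the orders of summation may be exchanged, and then argue formally. For (1), writing $B_m(t)=\sum_{\ell\ge 1}A(\ell,m)\,\ell^{-(1+\alpha)+it}$ and using $(\ell m^2)^{-(1+\alpha)+it}=\ell^{-(1+\alpha)+it}\,m^{-2(1+\alpha)+2it}$, one has
\[
\sum_{\ell,m\ge 1}A(\ell,m)\,(\ell m^2)^{-(1+\alpha)+it}=\sum_{m\ge 1}m^{-2(1+\alpha)+2it}\,B_m(t),
\]
and then Cauchy--Schwarz applied with the \emph{asymmetric} factorisation $m^{-2(1+\alpha)}=m^{-(1+2\alpha)/2}\cdot m^{-(3+2\alpha)/2}$ gives
\[
\Bigl|\sum_{m\ge 1}m^{-2(1+\alpha)+2it}B_m(t)\Bigr|^2\le\Bigl(\sum_{m\ge 1}m^{-1-2\alpha}\Bigr)\Bigl(\sum_{m\ge 1}m^{-3-2\alpha}\,|B_m(t)|^2\Bigr).
\]
Since $\sum_{m\ge1}m^{-1-2\alpha}\le 1+\int_1^\infty x^{-1-2\alpha}\,dx=1+(2\alpha)^{-1}\ll\alpha^{-1}$ (in the bounded range of $\alpha$ relevant here), this is \eqref{eq:kowmich97_lemmas_1}; the point of the asymmetric split, rather than the symmetric one $m^{-2(1+\alpha)}=m^{-(1+\alpha)}\cdot m^{-(1+\alpha)}$, is precisely to place the weight on the exponent $-3-2\alpha$ that will be needed when Corollary~\ref{corr:lsi_02} is applied to the inner sums downstream, the cost being only the factor $\alpha^{-1}$; the hypothesis $|B_m(t)|\ll m^2$ plays no further role beyond ensuring the series make sense.

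For (2) the argument is the same except that I would use the \emph{symmetric} factorisation $m^{-2\wc}=m^{-\wc}\cdot m^{-\wc}$, which yields
\[
\Bigl|\sum_{m\le M}m^{-2\wc+2it}B_m(t)\Bigr|^2\le\Bigl(\sum_{m\le M}m^{-2\wc}\Bigr)\Bigl(\sum_{m\le M}m^{-2\wc}\,|B_m(t)|^2\Bigr),
\]
where now $B_m(t)=\sum_{\ell\ge1}A(\ell,m)\,\ell^{-\wc+it}$. Here the first factor is $O(1)$ \emph{uniformly in $M$}, because $\wc\ge\frac{13}{16}>\frac12$ forces $2\wc>1$, so no factor analogous to $\alpha^{-1}$ arises and one obtains \eqref{eq:kowmich97_lemmas_2} directly. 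The assumption that $A(\ell,m)=0$ for $m\ge M$ is used only to keep the outer sum finite, so that convergence of the left-hand side is automatic and no tail estimate is needed.

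For (3) I would substitute the Hecke relation $\lambda_\chi(\ell)\lambda_\chi(m)=\sum_{d\mid(\ell,m)}\chi_q(d)\,\lambda_\chi(\ell m d^{-2})$ into the left-hand side of \eqref{eq:kowmich97_lemmas_3}, interchange the summations (legitimate under the stated convergence hypotheses) so that $d$ runs outermost, and then make the change of variables $\ell=vd$, $m=wd$ with $v,w\ge1$, followed by $h=vw$. Under this substitution $\ell m=vw\,d^2=hd^2$ and $\lambda_\chi(\ell m d^{-2})=\lambda_\chi(h)$, so the constraint $\ell m\ge j_1$ becomes exactly $hd^2\ge j_1$, the coefficient $A(\ell,m)$ becomes $A(vd,wd)$, and collecting the inner sum over the factorisations $vw=h$ produces precisely the right-hand side of \eqref{eq:kowmich97_lemmas_3}. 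The identity \eqref{eq:kowmich97_lemmas_3b} is obtained in the same way, the only change being that the two conditions $\ell\le j_2$ and $m\le j_3$ translate into $v\le j_2/d$ and $w\le j_3/d$. I do not expect a genuine obstacle in this lemma: the only delicate points are the constraint bookkeeping in (3) --- which stays clean because the weight $\ell m$ is invariant under the substitution, equalling $hd^2$ --- and verifying that the rearrangements of the (in general only conditionally convergent) series are legitimate, which is exactly what the standing hypotheses are designed to supply.
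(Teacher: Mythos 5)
Your proof is correct and follows essentially the same route as the paper: both parts (1) and (2) are instances of the same Cauchy--Schwarz bound with a free splitting parameter (the paper writes $m^{-2s}=m^{r-s}\cdot m^{-(r+s)}$ and then takes $r=\tfrac12$ for (1) and $r=0$ for (2), which is exactly your ``asymmetric'' versus ``symmetric'' factorisation), and part (3) follows, as you show, from the Hecke multiplicativity relation together with the substitution $\ell=vd$, $m=wd$, $h=vw$. Your parenthetical caveat that $\sum_m m^{-1-2\alpha}\ll\alpha^{-1}$ requires $\alpha$ to stay in a bounded range is a correct and slightly more careful observation than what appears in the paper.
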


\begin{proof}
By the Cauchy--Schwarz inequality, we have
\begin{equation}\label{eq:kowmich97_lemmas_Cauchy_S}
\bigg| \sum_{\ell,m \geq 1} A(\ell,m) (\ell m^2)^{-s} \bigg|^2 \leq \bigg(\sum_{m \geq 1} m^{2(r-\Reel(s))} \bigg) \bigg(\sum_{m \geq 1} m^{-2(r+\Reel(s))} \bigg| \sum_{\ell \geq 1} A(\ell,m) \ell^{-s} \bigg|^2 \bigg)
\end{equation}
for all real numbers $r$ and all complex numbers $s$ for which the sums on the right side converge.
 The first  bound follows for $r=\frac 1 2$ and $s=(1+\alpha)-it$. 
As for the second bound, the sums on the right side of \eqref{eq:kowmich97_lemmas_Cauchy_S} are then only over $m \leq M$; 
the bound follows for $r=0$ and $s=\wc-it$. 
The equalities in (c)  follow from \eqref{eq:product-of-coeff}.
\end{proof}

\begin{remf}
These (in)equalities have been used in \cite[\S 7]{KM97} to prove a zero-density estimate for $L$-functions associated to certain cusp forms. The first proofs of the Bombieri--Vinogradov theorem relied heavily on zero-density estimates for Dirichlet $L$-functions; Gallagher's simplification of these proofs then removed any direct appeal to the zeros but still kept the core of the argument. Thus, it is not surprising that Lemma \ref{lem:kowmich97_lemmas} plays a role both here and in~\cite{KM97}.
\end{remf}

Set $\alpha=(\log X)^{-1}$ and $c=1+\alpha$, then apply \eqref{eq:kowmich97_lemmas_1} and \eqref{eq:kowmich97_lemmas_3} to \eqref{eq:Gz_expansion} and obtain
\begin{equation}\label{eq:Gz_hoch2}
\begin{aligned}
& |G_z(c+it,\lambda_{\chi})|^2 \ll \ 
(\log X) \sum_{m \geq 1} m^{-3-2 \alpha} \bigg|\sum_\stacksum{k,\ell \geq 1}{k \ell  > \frac{z}{m^2}} (\log k)  \mu(\ell) |\mu(\ell m)| \lambda_\chi(\ell)  \lambda_\chi(k) (kl)^{-(c+it)} \bigg|^2 \\
=\ &
(\log X) \sum_{m \geq 1} m^{-3-2 \alpha} \bigg|\sum_\stacksum{h, d \geq 1}{hd^2  > \frac{z}{m^2}} \chi_q(d) \sum_\stacksum{v,w \geq 1}{vw=h} (\log vd) \mu(wd) |\mu(wdm)| \lambda_\chi(h) (hd^2)^{-(c+it)} \bigg|^2.
\end{aligned}
\end{equation}
Set 
\begin{equation}\label{eq:a_1(h,d,m)}
a_1(h,d,m)= \sum_\stacksum{v,w \geq 1}{vw=h} (\log vd) \mu(wd) |\mu(wdm)| 
\end{equation}
and apply once again \eqref{eq:kowmich97_lemmas_1} to the right side of \eqref{eq:Gz_hoch2}. This yields
\[
|G_z(c+it,\lambda_{\chi})|^2 \ll (\log X)^2 \sum_{m \geq 1} m^{-3-2 \alpha} \sum_{d \geq 1} d^{-3-2 \alpha} \bigg|\sum_{h > \frac{z}{m^2d^2}} a_1(h,d,m) \lambda_\chi(h) h^{-(c+it)} \bigg|^2,
\]
which now has the right form to apply \eqref{eq:LSI_int_uncond}. We get
\begin{equation*}
\begin{aligned}
&\sum_{\chi \in \Hhatc(Q_1)} \int_{(c)}|G_z(s,\lambda_{\chi})|^2 |s|^{-(k+1)}\, |ds| \\
\ll_\eps\ & Q_1^\eps (\log X)^2 \sum_{m,d \geq 1} (md)^{-3-2 \alpha} \sum_{h > \frac{z}{m^2d^2}} |a_1(h,d,m)|^2 (h^{-1-2\alpha}+h^{-3/2-2\alpha}Q_1^{5/2}) (1+(\log h)^3).
\end{aligned}
\end{equation*}
Since $z^\alpha \leq X^\alpha \ll 1$ and
\[
|a_1(h,d,m)|^2 \leq \tau(h)^2 (\log hd)^2 
\]
for all $h$, $d$ and $m$, the contribution coming from $|G_z|^2$ in \eqref{eq:3terms} is bounded by
\begin{equation}\label{eq:contribution_from_Gz}
O_\eps \big(X (\log X)^{K_1} Q_1^\eps (1+Q_1^{5/2}z^{-1/2}))
\end{equation}
for some $K_1>0$; 
in fact, we may choose $K_1=11$. 

A comparison of \eqref{eq:Gz_expansion} and \eqref{eq:1-LMz_expansion} shows that the analysis of the contribution coming from $|1-LM_z|^2$ in \eqref{eq:3terms} can be performed in almost exactly the same way and the same bound is obtained. 
 Thus we record that the whole first term on the right side of \eqref{eq:3terms} can be bounded by \eqref{eq:contribution_from_Gz}. 

\medskip
Moving on to the second line of \eqref{eq:3terms}, each summand in the integrand is again analysed separately. The contribution coming from the integrand $1$ follows directly from \eqref{eq:MengeAllerCharaktereBound}: 
\begin{equation}\label{eq:contribution_from_1}
X^{\wc}  \sum_{\chi \in \Hhatc(Q_1)} \int_{(\wc)}1 \cdot |s|^{-(k+1)}\, |ds| \ll X^{\wc} |\Hhatc(Q_1)| \ll X^{\wc} Q_1^{3/2} (\log Q_1).
\end{equation}
Next, $F_z$ and $M_z$ are bounded in the same way as $G_z$ but with appeal to \eqref{eq:kowmich97_lemmas_2} (with $M=z$) instead of \eqref{eq:kowmich97_lemmas_1}, \eqref{eq:kowmich97_lemmas_3b} instead of \eqref{eq:kowmich97_lemmas_3} and \eqref{eq:LSI_int_uncond_with_trivial} instead of  \eqref{eq:LSI_int_uncond}. In fact, with $a_1(h,d,m)$ given by \eqref{eq:a_1(h,d,m)}, we find
\begin{equation}\label{eq:contribution_from_Fz}
\begin{aligned}
&X^{\wc}  \sum_{\chi \in \Hhatc(Q_1)} \int_{(\wc)}|F_z(s,\lambda_{\chi})|^2 |s|^{-(k+1)}\, |ds| 
\\
\ll_\eps\ & 
X^{\wc} Q_1^\eps \sum_{m,d \leq z} (md)^{-2\wc} 
\Big(
Q_1^{3/2}\sum_{h \leq Q_1^2} |a_1(h,d,m)|^2 h^{1-2\wc} (1+(\log h)^3) 
\\
& \qquad \qquad \qquad \qquad \quad \enskip + \sum_{Q_1^2<h \leq \frac{z}{m^2d^2}} |a_1(h,d,m)|^2 (h^{1-2\wc}+h^{1/2-2\wc}Q_1^{5/2}) (\log h)^3 
\Big)
\\
\ll\ & X^{\wc} (\log X)^{K_2} Q_1^\eps (Q_1^{11/2-4\wc}+z^{2-2\wc})
\end{aligned}
\end{equation}
for some $K_2 \geq 0$; we may choose $K_2=8$. Similarly,
\begin{equation}\label{eq:contribution_from_Mz}
X^{\wc}  \sum_{\chi \in \Hhatc(Q_1)} \int_{(\wc)}|M_z(s,\lambda_{\chi})|^2 |s|^{-(k+1)}\, |ds| 
\ll
X^{\wc} (\log X)^{K_2} Q_1^\eps (Q_1^{11/2-4\wc}+z^{2-2\wc}).
\end{equation}
The integrand $|F_zM_z|^2$ requires a little bit more work, but the approach is familiar by now: By~\eqref{eq:Mz_expansion}, \eqref{eq:Fz_expansion}, \eqref{eq:kowmich97_lemmas_2} and \eqref{eq:kowmich97_lemmas_3b}, we have
\begin{equation*}
\begin{aligned}
&\big|F_z\big(\wc+it,\lambda_{\chi}\big)M_z\big(\wc+it,\lambda_{\chi}\big)\big|^2 \\
\ll\ & \sum_{m,w,d \leq z} (mwd)^{-2\wc} \Big| \sum_{v \leq \frac{z}{w^2}} \sum_{b \leq \frac{z}{(md)^2}} a_2(m,b,v,w) \lambda_\chi(b) \lambda_\chi(v) (bv)^{-(\wc+it)}\Big|^2,
\end{aligned}
\end{equation*}
where
\[
a_2(b,d,m,v,w)=\mu(v) |\mu(vw)| a_1(b,d,m). 
\]
By \eqref{eq:kowmich97_lemmas_3b} and \eqref{eq:kowmich97_lemmas_2}, we then get
\begin{equation*}
\begin{aligned}
&\big|F_z\big(\wc+it,\lambda_{\chi}\big)M_z\big(\wc+it,\lambda_{\chi}\big)\big|^2 \\
\ll\ &  \sum_{m,w,d,r \leq z} (mwdr)^{-2\wc} 
\Big| \sum_{h \geq 1} a_3(h,r,d,m,w) \lambda_\chi(h)  h^{-(\wc+it)}\Big|^2,
\end{aligned} 
\end{equation*}
where
\[
a_3(h,r,d,m,w)=
\sum_{v' \leq \frac{z}{w^2r}} \sum_\stacksum{b' \leq \frac{z}{(md)^2r}}{v'b'=h}
a_2(b'r,d,m,v'r,w)
\]
whose absolute value is 
\[
|a_3(h,r,d,m,w)| \leq	\sum_{v' \leq \frac{z}{w^2r}} \sum_\stacksum{b' \leq \frac{z}{(md)^2r}}{v'b'=h} \tau(b'r) (\log b'rd) \ll (\log z)\, \tau(r)\, \tau_3(h),
\]
where $\tau_3(h)$ is the ternary divisor function (i.e., the number of ordered $3$-tuples $(b_1,b_2,b_2)$ of positive integers such that $h=b_1b_2b_3$). By Corollary \ref{corr:lsi_02} and the bound 
\[
\sum_{h \leq z^2} \tau_3(h)^2 \ll z^2(\log z)^{8},
\] 
which follows by the method of \cite[p.~37]{Kow04}, for example, we obtain
\begin{equation}\label{eq:contribution_from_FzMz}
X^{\wc}  \sum_{\chi \in \Hhatc(Q_1)} \int_{(\wc)}|F_z\big(s,\lambda_{\chi}\big)M_z(s,\lambda_{\chi})|^2 |s|^{-(k+1)}\, |ds| 
\ll_\eps\ X^{\wc} (\log X)^{K_3} Q_1^\eps (Q_1^{11/2-4\wc}+z^{4-4\wc})
\end{equation}
for some $K_3 \geq 0$; we may choose $K_3=13$. 

\medskip
We gather the bounds \eqref{eq:contribution_from_1}, \eqref{eq:contribution_from_Fz}, \eqref{eq:contribution_from_Mz}, \eqref{eq:contribution_from_FzMz} and record that the contribution to the right side of \eqref{eq:3terms} coming from the second line is
\begin{equation}\label{eq:contribution_from_Fz,Mz,1,FzMz}
O_\eps \big(X^{\wc} (\log X)^{K_3} Q_1^\eps (Q_1^{11/2-4\wc}+z^{4-4\wc}) \big).
\end{equation}

\medskip
It remains to bound the third term on the right side of \eqref{eq:3terms}. We could proceed as in \cite{Bom87}, using the bound $\sum_{n \leq N} \lambda_\chi(n) \ll_{\eps} (|q|^2N)^{1/2+ \eps}$ that holds for Fourier coefficients of weight-one cusp forms and therefore for our coefficients $\lambda_\chi$ as they arise from complex class group characters here (see Proposition 5 in \cite{HM06}, for example). 

However, in our case it is sufficient and easier to use the convexity bound for the functions $L(s,\lambda_\chi)$: Each of them satisfies a functional equation of the form 
\begin{equation*}\label{eq:functiona-equation-for-classgroupLfcts}
\Phi(s,\lambda_\chi) =\Phi(1-s, \lambda_\chi),
\end{equation*}
 where
\[
\Phi(s,\lambda_\chi)=\bigg(\frac{\sqrt{|q|}}{2\pi}\bigg)^{s}\Gamma(s) L(s, \lambda_\chi);
\]
see \cite[\S22.3]{IK04}, for example. 
Therefore, the convexity principle of Phragm\'en--Lindel\"of 
 yields 
\begin{equation*}
L(\wc +it, \lambda_\chi) \ll_\eps \big(|q|^{1/2} (1+|t|)\big)^{1-\wc+\eps},
\end{equation*}
for all $t \in \Rr$. 

Combining the convexity principle for $L(s, \lambda_\chi)$ and Cauchy's inequality for the derivative of analytic functions (consider the disc around $\wc+it$ with radius $(\log Q_1)^{-1}$), 
we also get
\[
L'(s, \lambda_\chi) \ll_\eps |q|^{(1-\wc)/2+\eps} (1+|t|)^{1-\wc+\eps+(\log Q_1)^{-1}}(\log Q_1).
\]
If $k\geq 2$, these bounds and \eqref{eq:MengeAllerCharaktereBound} yield
\begin{equation}\label{eq:ComplexChar3Term}
\sum_{\chi \in \Hhatc(Q_1)} \int_{(\wc)}(|L(s,\lambda_{\chi})|^2+|L'(s,\lambda_{\chi})|^2) |s|^{-(k+1)}\, |ds| \ll_\eps (\log Q_1)^3 Q_1^{5/2-\wc+\eps}. 
\end{equation}
\begin{remf}
 Duke, Friedlander and Iwaniec \cite[Theorem 2.6]{DFI02} proved the first subconvexity bound for the $L$-functions associated to complex class group characters for all fundamental discriminants (they had previously proved such a bound for special types of discriminants). Subsequently, a simpler proof -- and a slightly better bound -- was found by Blomer, Harcos and Michel \cite[Corollary~1]{BHM07}. As is clear from the theorem numbering of these results, these are only special cases of subconvexity bounds for much more general $L$-functions.
  The convexity bound is more than enough for our needs and any invocation of these deep results would be pretentious here.
\end{remf}

\medskip 
Let $K=A+2+K_4$ for some $K_4 \geq \max(K_1,K_2,K_3,3)$; thus, $K=A+15$ is admissible, for example. 
 We put together the upper bounds  
\eqref{eq:contribution_from_Gz}, \eqref{eq:contribution_from_Fz,Mz,1,FzMz} 
and \eqref{eq:ComplexChar3Term} that we have found for the three summands in \eqref{eq:3terms}, insert them into \eqref{eq:bv_gesplittet_complex}
and get
\begin{equation}\label{eq:E'_k_nacheinsetzen}
\begin{aligned}
E'_k(Q,X)\ll_{\eps} (\log X)^{K-A} &\max_{Q_0 \leq Q_1\leq Q} Q_1^{-1/2+\eps}X^{\wc} \\
 &\quad\quad\quad \times \left( X^{1-\wc}(1+Q_1^{5/2} z^{-1/2}) +Q_1^{11/2-4\wc}+z^{4-4\wc}+ Q_1^{5/2-\wc}\right).
\end{aligned}
\end{equation}
Set 
\begin{equation}\label{eq:setze_log_power}
D:=\max(D_1, 4K)
\end{equation} 
in Remark \ref{rem:bv_small_discr}. We can assume without loss of generality that $\eps \leq \frac 1 4$. 
If $Q \geq Q_0=(\log X)^D$, then $Q_1^{1/2-\eps}$ 
is therefore at least $(\log X)^K$ and if we choose $z=Q_1^{4-\nu+2\eps}(\log X)^{2K}$
and $\wc=~\wc(\nu)=1-\frac{3}{24-8\nu}$, 
we get 
\begin{align*}
E'_k(Q,X)  
\ll_{\eps} \ 
(\log X)^{-A} Q^{\nu/2}(X+(\log X)^{8K(1-\wc)+K}X^{\wc}Q^{5-\nu/2-4\wc+9\eps}
). 
\end{align*}
This gives
\[
E'_k(Q,X) \ll_{\eps} Q^{\nu/2}X (\log X)^{-A}
\]
if 
$Q^{4(1+(2-\nu)(3-\nu)/3)+72\eps} \leq X (\log X)^{-B}$ with $B = 8K+\max_{\nu \in [0,1]} \frac{K}{1-\wc(\nu)}=16K$; that is, we may choose 
\begin{equation}\label{eq:bv_B-Wert_complex}
B=16A+300.
\end{equation}

\begin{remarkf}\label{rem:BV-Linde01}
If we assume the Lindel\"of Hypothesis, we may use the conditional large sieve inequality \eqref{eq:LSI_int_cond} instead of \eqref{eq:LSI_int_uncond_with_trivial} and \eqref{eq:LSI_int_uncond} and replace the exponent $\frac{5}{2}-\wc+\eps$ by $\frac{3}{2}+\eps$ in \eqref{eq:ComplexChar3Term}. This leads to the bound
\begin{equation*}\label{eq:E'_k_nacheinsetzen_Linde}
\begin{aligned}
E'_k(Q,X)\ll_{\eps} (\log X)^{K-A} \max_{Q_0 \leq Q_1\leq Q}\ & Q_1^{-1/2+\eps}X^{\wc} \\
 & \times \left( X^{1-\wc}(1+Q_1^{3/2} z^{-1/2}) +Q_1^{3/2}+z^{4-4\wc}\right)
\end{aligned}
\end{equation*}
for some $\wc = \wc(\nu)\geq \frac{3}{4}$, and this yields
\[
E'_k(Q,X) \ll_\eps Q^{\nu/2}X (\log X)^{-A}
\]
if 
$\nu \geq \demi$  and $Q^{4-2\nu+\eps} \leq X (\log X)^{-B}$, or if $\nu  < \demi$ and $Q^{4(2-\nu)^2/3+\eps} \leq X (\log X)^{-B}$. Since these ranges are shorter than the unconditional one in the next section, they yield the ranges in Remark \ref{rem:BV_Linde2}  and the first statement in Remark \ref{rem:conditional_ranges} (with $\nu=1$).
\end{remarkf}


\section{Real character sums for the Bombieri--Vinogradov type results}\label{sec:realchar1}

Before approaching the second sum $E''_k(Q,X)$ in \eqref{eq:bv_gesplittet2}, we note that each of the Chebyshev functions $\wpsi(X;q,\chi)$ for real class group characters $\chi$ can be written as the sum of two Chebyshev functions for Dirichlet characters: 
If $q \in \DF$ and $\chi \in \Hhat(q)$ is a real class group character, 
then the Kronecker Factorization Formula (see \cite[Theorem 12.7]{Iwa97}, for example) states that there exist 
two (positive or negative) fundamental discriminants  $d_1$ and $d_2$ with $d_1d_2=q$ such that the  
$L$-function of $\chi$ factors as $L(s,\lambda_{\chi})=L(s,\chi_{d_1}) L(s,\chi_{d_2})$ 
into the Dirichlet $L$-functions that are associated to the primitive real characters $\chi_{d_1}$ modulo $|d_1|$ and $\chi_{d_2}$ modulo $|d_2|$. On the other hand, every such factorization of $q$ gives rise to a real class group character $\chi=\chi_{d_1,d_2}$ of $\Hcl(q)$. Note that the trivial class group character $\chiq_0$ corresponds to the trivial factorization $q=1\cdot q$. Thus 
\begin{equation}\label{eq:Kronecker2}
\frac{L'(s,\lambda_{\chi_{d_1,d_2}})}{L(s,\lambda_{\chi_{d_1,d_2}})}=\frac{L'(s,\chi_{d_1})}{L(s,\chi_{d_1})}+\frac{L'(s,\chi_{d_2})}{L(s,\chi_{d_2})}\ .
\end{equation}
Let $F_{}(Q)$ denote the set of all (positive or negative) fundamental discriminants $d\neq 1$ with $|d|\leq Q$. The $k$-th iteration of the Mellin transform of ${\psi}_k(X;\chi_{d})$, which was defined in \eqref{eq:psik_for_Dirichlet}, is $\frac{L'}{L}(s,\chi_{d})$, hence  
\begin{equation}\label{eq:mellin_dirich}
{\psi}_k(X;\chi_{d})=-\frac{1}{2\pi i}\int_{(c)}\frac{L'}{L}(s,\chi_{d})X^s s^{-(k+1)}\, ds
\postdisplaypenalty=100
\end{equation}
for each $c>1$. 
Therefore, \eqref{eq:Kronecker2} and \eqref{eq:mellin_dirich} imply
\begin{align*}
E''_k(Q,X) \leq&\ \sum_{d_1 \in F_{}(Q)} \sum_{\substack{d_2 \in F_{}(Q)\\ d_1 d_2 \in M''(Q)}} \frac{1}{h(d_1d_2)}(|{\psi}_k(X;\chi_{d_1})|+ |{\psi}_k(X;\chi_{d_2})|)\\
=&\ 2 \sum_{d_1 \in F_{}(Q)} |{\psi}_k(X;\chi_{d_1})| \sum_{\substack{d_2 \in F_{}(Q)\\ d_1 d_2 \in M''(Q)}} \frac{1}{h(d_1d_2)}. 
\end{align*}
The class number bound \eqref{eq:classno_bound1} for the discriminants in $M''(Q)$ yields 
\[
E''_k(Q,X) \ll \  (\log Q)  \sum_{d_1 \in F_{}(Q)} \frac{1}{|d_1|^{1/2}} |{\psi}_k(X;\chi_{d_1})| \sum_{\substack{d_2 \in F_{}(Q)\\ d_1 d_2 \in M''(Q)}} \frac{1}{|d_2|^{1/2}}.
\]
By the assumption \eqref{Bedingung_M} for $M(Q)$ in Theorem \ref{thm:BV_01} and for $\Pi(Q)$ in Theorem \ref{thm:BV_prime}, the sum over $d_2$ has at most $Q^\nu$ terms. Hence 
\[
E''_k(Q,X) \ll \ (\log Q)    \sum_{d_1 \in F_{}(Q)} \frac{1}{|d_1|^{1/2}} |{\psi}_k(X;\chi_{d_1})| \sum_{\substack{d_2 \leq \min(Q^\nu,\frac{Q}{|d_1|})}} \frac{1}{d_2^{1/2}}, 
\]
which implies, by dyadic decomposition, 
\begin{equation*}\label{eq:E_k'_2}
\begin{aligned}
E''_k(Q,X) 
\ll \quad  & \quad \enskip (\log Q)^2    Q^{\nu/2}  \max_{Q_1\leq Q^{1-\nu}}\ Q_1^{-1/2} \sum_{d_1 \in F_{}(Q_1)} |{\psi}_k(X;\chi_{d_1})| 
\\
 & +\  (\log Q)^2  Q^{1/2}  \max_{Q^{1-\nu} \leq Q_1\leq Q}\ Q_1^{-1} \sum_{d_1 \in F_{}(Q_1)} |{\psi}_k(X;\chi_{d_1})| 
\\
=\quad & \enskip  E''_{1;k}(Q,X) + E''_{2;k}(Q,X), \ \text{ say}.
\end{aligned}
\end{equation*}
Note that we cannot profit here from the fact that $M''(Q)$ does not contain any small discriminants, which were already handled by means of Goldstein's generalization of the Siegel--Walfisz theorem. Instead, we may use the original Siegel--Walfisz theorem to handle the small discriminant divisors $d_1$ here.

\medskip
In fact, we have now basically reduced the problem to the analogous problem for Dirichlet characters, i.e.\ 
we are in a similar position as in the original Bombieri--Vinogradov theorem, the only differences being:
\begin{enumerate}
\item
The first term $E''_{1;k}(Q,X)$ above has the factor $Q_1^{-1/2}$ in front of the sum 
(coming from the class number estimate) instead of $Q_1^{-1}$ (coming from the Euler totient function estimate) in the classical case. 
This will lead to a smaller admissible $Q$ for $\nu <1$. 
 \item
Our sums are only over real primitive characters modulo $|d_1|$ with $|d_1| \leq Q$; by positivity, we can, of course, include the non-real primitive Dirichlet characters as well. 
\end{enumerate}

We proceed 
 like in Section \ref{sec:complexchar1}, but using the large sieve inequality for Dirichlet characters. We skip the explicit calculations as they are the same as in \cite{Bom87} and obtain (compare the inequality at the bottom of page 62 and the top of page 63 in \cite{Bom87}): 
\begin{align*}
\sum_{d_1 \in F_{}(Q_1)} |{\psi}_k(X;\chi_{d_1})| \ll \ & X(\log X)^4+X(\log X)^4 Q_1^{2}z^{-1}+X^{1/2}(\log X)^6 z^2 \\
& +X^{1/2}(\log X)^6 Q_1^{2}+X^{1/2}(\log X)^2 Q_1^{4}z^{-2}=:G(X,Q_1,z).
\end{align*}
Here the variable $z$ is the ordinate at which we truncate the inverse Mellin transform in the corresponding Gallagher identity (compare \eqref{eq:gallagher_bv_complex}) and it will be chosen in a moment. 

We obtain
\begin{align*}
E''_{1;k}(Q,X) \ll Q^{\nu/2} (\log X)^2 \max_{Q_1\leq Q^{1-\nu}}\  Q_1^{-1/2} G(X,Q_1,z) 
\end{align*}
and we want to bound the right-hand side with $Q^{\nu/2}X(\log X)^{-A}$. This can be achieved when
we set 
$ z=Q_1^{3/2}(\log X)^{6+A}$ 
if the maximum above is attained for
\[
(\log X)^{12+2A} \leq Q_1 \leq X^{1/5} (\log X)^{-8-6A/5}. 
\]
If the maximum is attained for a smaller $Q_1$, we use the relation \eqref{eq:von_k+1_zu_k_per_integral} and the Siegel--Walfisz theorem \eqref{eq:SW_for_Dirichlet_Char} to get the desired bound.  Altogether, we thus have
\begin{equation}\label{eq:E''_k1}
E''_{1;k}(Q,X) \ll_A Q^{\nu/2}X (\log X)^{-A}
\end{equation}
if 
$Q^{5-5\nu} \leq X(\log X)^{-B}$ for some $B=B(A)>0$.

Similarly,
\begin{align*}
E''_{2;k}(Q,X) \ll Q^{1/2} (\log X)^2 \max_{Q^{1-\nu} \leq Q_1\leq Q}\  Q_1^{-1} G(X,Q_1,z)
\end{align*}
is bounded by $Q^{\nu/2}X (\log X)^{-A}$ if
we set
$z=Q_1Q^{1/2-\nu/2}(\log X)^{6+A}$ and if the maximum is attained for 
\[
(\log X)^{12+2A} \leq Q_1  \quad \text{ and } \quad Q\leq X^{1/(5-3\nu)} (\log X)^{-(40-6A)/(5-3\nu)}. 
\]
Together with the Siegel--Walfisz theorem this leads to the bound  
\begin{equation}\label{eq:E''_k2}
E''_{2;k}(Q,X) \ll_A Q^{\nu/2}X (\log X)^{-A}
\end{equation}
if 
$Q^{5-3\nu} \leq X(\log X)^{-B}$ for some $B=B(A)>0$. Since this range is shorter than the range for $E''_{1;k}(Q,X)$ in \eqref{eq:E''_k1}, we have
\begin{equation*}\label{eq:E'_k1}
E''_{k}(Q,X) \ll_A Q^{\nu/2}X (\log X)^{-A}
\end{equation*}
if 
$Q^{5-3\nu} \leq X(\log X)^{-B}$ for some $B=B(A)>0$. Note that we may choose $B=40+6A$, which is smaller than the $B$-value~\eqref{eq:bv_B-Wert_complex} that we have found at the end of Section \ref{sec:complexchar1}. 

\begin{remarkf}\label{rem:heathb}
We could also employ Heath-Brown's large sieve inequality for real Dirichlet characters \cite{Hea95}
when $\nu<1$. 
This inequality yields then a larger range for the discriminants in this section, but it requires a more careful analysis due to the distinct form of the sum on the right side of the inequality.  
Since we are anyway limited by the much shorter range coming from $E'_k$, 
 this gives no overall gain and therefore we will not delve into this. Note that 
this large sieve inequality 
does not seem to be applicable for $\nu=1$: It yields a term of size $X^{1+\eps}$ (for any $\eps>0$) that  
does not permit us to beat trivial bounds (compare Remark \ref{rem:BV_comparison_to_trivial})  
since our method can only compensate powers of $(\log X)$ when $\nu=1$, but not a genuine~$X^\eps$. 
\end{remarkf}


\section{A general result of Barban--Davenport--Halberstam type}\label{sec:bdh}

In the Barban--Davenport--Halberstam theorem for arithmetic progressions \eqref{eq:BDH_orig}, the prime counting function can be replaced by many other arithmetic functions $g$. Indeed, it suffices to show that $g$ is well distributed in arithmetic progressions to small moduli in order to prove that 
$g$ shows a similar behaviour for almost all residue classes to almost all large moduli (see \cite[\S 17.4]{IK04}, for example). 

We will show here that a general mean square distribution result also holds with respect to binary quadratic forms for arithmetic functions $g$ that are weighted with the function $w(C,n)$ (see \eqref{eq:weight_w-Cn_for_primepowers}), satisfy Siegel--Walfisz conditions for both arithmetic progressions and form classes, and for which the sums 
\[
\sum_{n \leq X} g(n) \sum_\stacksum{1 <k,m<n}{km=n} \chi_1(k) \chi_2(m)
\]
are small for most pairs $(\chi_1,\chi_2)$ of distinct primitive real Dirichlet characters: 
\begin{theorem}\label{thm:bdh_general}  
Let $3 \leq Q \leq X$, let $M(Q)$ be any subset of $\DF(Q)$ and let $g$ be an arithmetic function. Assume that 
\begin{equation}\label{eq:bdh_SW_assumption_forms}
\begin{aligned}
D(g;X;q,C):=&\ \sum_\stacksum{n\leq X}{n \in \Repr(q,C)} w(C,n) g(n) 
- \frac{1}{h(q)}\sum_{K \in \Kcl(q)}\sum_\stacksum{n\leq X}{n \in \Repr(q,K)} w(K,n)g(n) \\
\ll_{L} &\  X^{1/2} (\log X)^{-L}  \Bigg(\sum_{n \leq X} |g(n)|^2 \Bigg)^{1/2}
\end{aligned}
\end{equation}
for all $L>0$, all $q \in \DF(Q)$ with $|q| \leq (\log X)^{L}$ and all form classes $C \in \Kcl(q)$.  

\noindent Also assume that
\begin{equation}\label{eq:bdh_SW_assumption_residues}
\sum_\stacksum{n\leq X}{n \equiv a \mods q} g(n) - \frac{1}{\varphi(q)}\sum_\stacksum{n\leq X}{(n,q)=1} g(n) \ll_L X^{1/2} (\log X)^{-L}  \Bigg(\sum_\stacksum{n \leq X}{(n,q)=1} |g(n)|^2 \Bigg)^{1/2}
\end{equation}
for all $L>0$, all  $q \in \DF(Q)$ and all integers $a$ with $(a,q)=1$. Set
\begin{equation*}
R(g,Q,X):= \sum_{|d_1|>1} \sum_\stacksum{|d_2|>1}{d_1d_2 \in M(Q)} \frac{1}{h(d_1d_2)} \bigg|\sum_{n \leq X} g(n) \sum_\stacksum{1 <k,m<n}{km=n} \chi_{d_1}(k) \chi_{d_2}(m)\bigg|^2,
\end{equation*}
where the outer sums run over (positive and negative) fundamental discriminants and $\chi_{d}$ denotes the primitive real Dirichlet character modulo $|d|$. 

\noindent 
Then 
	\begin{equation}\label{eq:bdh_general}
	\begin{aligned}
	& \sum_{q \in M(Q)} \sum_{C \in \Kcl(q)} \left|D(g;X;q,C)\right|^2
	\\
	\ll_{A,\eps} & 
	\ Q^{1/2}X^{1/2}\left(Q^{3/2+\eps}(\log X)^2 + X^{1/2}(\log X)^{-A}\right) \sum_{n \leq X} |g(n)|^2 +R(g,Q,X)
	\end{aligned}
	\end{equation}
	for all arbitrarily large $A>0$ and all arbitrarily small $\eps>0$.  
\end{theorem}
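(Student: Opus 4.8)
The plan is to follow the same architecture as the Bombieri--Vinogradov argument of the previous sections, but now taking squares throughout and exploiting orthogonality in $\Hhat(q)$. First I would expand $D(g;X;q,C)$ via the inverse Fourier transform on the finite abelian group $\Hhat(q)$: writing $D(g;X;q,C) = \frac{1}{h(q)} \sum_{\chi \neq \chiq_0} \overline{\chi}(B_q(C))\, \psi_g(X;q,\chi)$, where $\psi_g(X;q,\chi) := \sum_{\aF \in Z(q),\ \No(\aF) \leq X} \chi(\aF) g(\No(\aF))$, the Parseval identity on $\Hcl(q)$ gives
\begin{equation*}
\sum_{C \in \Kcl(q)} |D(g;X;q,C)|^2 = \frac{1}{h(q)} \sum_{\chi \in \Hhat(q) \smallsetminus \{\chiq_0\}} |\psi_g(X;q,\chi)|^2 .
\end{equation*}
So the left-hand side of \eqref{eq:bdh_general} becomes $\sum_{q \in M(Q)} \frac{1}{h(q)} \sum_{\chi \neq \chiq_0} |\psi_g(X;q,\chi)|^2$, and — exactly as in Section~\ref{sec:prelim_1} — this splits into a complex-character part ($\chi^2 \neq \chiq_0$) and a real-character part ($\chi^2 = \chiq_0$, $\chi \neq \chiq_0$). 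As always, the discriminants with $|q|$ below a fixed power of $\log X$ are handled separately by the Siegel--Walfisz hypothesis \eqref{eq:bdh_SW_assumption_forms}, and the (rare) exceptional discriminants are handled crudely via the Landau gap as in Proposition~\ref{prop:landau-siegel1}; for the remaining discriminants we use the class number lower bound \eqref{eq:classno_bound1} to replace $h(q)^{-1}$ by $|q|^{-1/2}(\log|q|)$.

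For the complex-character contribution I would apply the large sieve inequality of Lemma~\ref{lem:lsi_complex} (in the form of Corollary~\ref{corr:lsi_02}) directly to $\sum_{\chi \in \Hhatc(Q)} |\psi_g(X;q,\chi)|^2 \le \sum_{\chi \in \Hhatc(Q)} \big| \sum_{n \le X} \lambda_\chi(n) g(n) \big|^2$, after noting that $\psi_g(X;q,\chi) = \sum_{n \le X} \lambda_\chi(n) g(n)$ since $\chi(\aF)$ only depends on $\No(\aF)$ through the class and $\lambda_\chi$ collects ideals of given norm. This yields a bound of the shape $\ll_\eps (X(\log X)^3 + X^{1/2}(\log X) Q^{5/2+\eps}) \sum_{n \le X} |g(n)|^2$ after the $|q|^{-1/2}$ weight and the summation over $q$; tracking the extra $Q^{1/2}$ from the outer sum of weights produces precisely the term $Q^{1/2} X^{1/2}(Q^{3/2+\eps}(\log X)^2 + \cdots)\sum |g(n)|^2$ in \eqref{eq:bdh_general}. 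For the real-character part I would factor each real class-group character through the Kronecker factorization $L(s,\lambda_\chi) = L(s,\chi_{d_1})L(s,\chi_{d_2})$ with $d_1 d_2 = q$, exactly as in Section~\ref{sec:realchar1}. The point is that $\psi_g$ for a real class-group character is then, up to the multiplicativity defect, a \emph{bilinear} expression in $\chi_{d_1}$ and $\chi_{d_2}$: the ``diagonal'' part where one of $k,m$ equals $1$ reduces to $\sum_{n \le X} \chi_{d_i}(n) g(n)$, which is small for small $|d_i|$ by the arithmetic-progression Siegel--Walfisz hypothesis \eqref{eq:bdh_SW_assumption_residues} (via the large sieve for Dirichlet characters, Lemma~\ref{lem:lsi_org}, for the larger $|d_i|$), contributing the $X^{1/2}(\log X)^{-A}$ term; the genuinely bilinear ``off-diagonal'' part, where $1 < k, m < n$, is by definition the term $R(g,Q,X)$, which we carry along unestimated in the statement.

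The main obstacle will be controlling the real-character contribution cleanly: unlike in the Bombieri--Vinogradov setting, squaring means we cannot simply invoke an $L^1$ average over Dirichlet characters, and the Kronecker factorization forces us to separate the ``trivial'' ($k=1$ or $m=1$) part of the convolution from the ``nontrivial'' part in a way that is uniform over the factorizations $q = d_1 d_2$. Concretely one must bound
\begin{equation*}
\sum_{q \in M(Q)} \frac{1}{h(q)} \sum_{\substack{d_1 d_2 = q \\ |d_1|,|d_2| > 1}} \bigg| \sum_{n \le X} g(n) \big( \lambda_{\chi_{d_1,d_2}}(n) - \text{(bilinear off-diagonal)} \big) \bigg|^2,
\end{equation*}
and the bilinear off-diagonal piece is exactly $R(g,Q,X)$ by construction, while the remaining (trivial) piece must be expanded, the cross terms estimated by Cauchy--Schwarz, and each diagonal factor handled by splitting the sum over $|d_i|$ at a power of $\log X$: small $|d_i|$ by Siegel--Walfisz \eqref{eq:bdh_SW_assumption_residues}, large $|d_i|$ by the large sieve inequality for Dirichlet characters together with the class number bound. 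The bookkeeping of the weights $w(C,n)$, and the fact that for ideals of norm $n$ the quantity $\lambda_\chi(n)$ and the weight $w(C,n)$ differ by a negligible contribution supported on ramified and higher-prime-power norms (of total size $O(X^{1/2}(\log X)^{O(1)})$ per discriminant, as in Section~\ref{sec:prelim_1}), is routine but must be done carefully to land the stated error shape. Assembling the three pieces — complex, real-diagonal, real-off-diagonal $=R(g,Q,X)$ — in the admissible range $3 \le Q \le X$ then gives \eqref{eq:bdh_general}.
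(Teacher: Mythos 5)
Your proposal follows essentially the same route as the paper's proof: Parseval on $\Hhat(q)$, separate treatment of small and exceptional discriminants via the Siegel--Walfisz hypothesis \eqref{eq:bdh_SW_assumption_forms} and Landau's rarity argument, the complex large sieve of Lemma~\ref{lem:lsi_complex} for $\chi^2\neq\chiq_0$, and the Kronecker factorization with the split into the diagonal ($k=1$ or $m=1$) and off-diagonal ($1<k,m<n$) pieces for $\chi^2=\chiq_0$, the off-diagonal yielding exactly $R(g,Q,X)$ and the diagonal being handled by \eqref{eq:bdh_SW_assumption_residues} for small moduli and the Dirichlet large sieve for large moduli. One small remark: the concern at the end about a discrepancy between $w(C,n)$ and $\lambda_\chi(n)$ of size $O(X^{1/2}(\log X)^{O(1)})$ per discriminant is unnecessary here, since $D(g;X;q,C)$ is defined directly with $w(C,n)$, which encodes the ideal count exactly, so Fourier inversion gives $\sum_{n\leq X}g(n)\lambda_\chi(n)$ with no additional error — that defect only arises in the Chebyshev-function setting of the Bombieri--Vinogradov argument.
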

\noindent We will prove this result in the next section. 

\medskip
Theorem \ref{thm:bdh1} follows easily from Theorem \ref{thm:bdh_general} by appeal to the Siegel--Walfisz theorem for arithmetic progressions and Blomer's variant of it for binary quadratic forms:
\begin{theorem}[Siegel--Walfisz, {\cite[Corollary 11.21]{MV07}}]\label{thm:SW_original}
For any $A>0$, there exists a number $c=c(A) > 0$ 
such that
\[
\pi(X;q,a)=\frac{\li(X)}{\varphi(q)} + O \left(X e^{-c\sqrt{\log X}}\right),
\]
uniformly for all pairs of positive integers $a$ and $q$ with $(a, q) = 1$ and $q \leq (\log X)^A$.
\end{theorem}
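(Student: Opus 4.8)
The plan is to reduce the statement to its Chebyshev-function form and then apply the classical explicit-formula machinery together with the zero-free region for Dirichlet $L$-functions and Siegel's theorem; this is entirely standard, and one may simply quote \cite[\S 11.3, Corollary 11.21]{MV07}, but I sketch the argument. First I would pass from $\pi(X;q,a)$ to $\psi(X;q,a)=\sum_{n\le X,\ n\equiv a\ (q)}\Lambda(n)$ by partial summation, noting that the contribution of higher prime powers is $O(\sqrt X\log X)$ and that the passage between $\frac{X}{\varphi(q)}$ and $\frac{\li(X)}{\varphi(q)}$ is absorbed into the error term; thus it suffices to prove $\psi(X;q,a)=\frac{X}{\varphi(q)}+O(Xe^{-c\sqrt{\log X}})$ uniformly for $q\le(\log X)^A$ and $(a,q)=1$.

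Next I would expand by orthogonality of Dirichlet characters, $\psi(X;q,a)=\frac{1}{\varphi(q)}\sum_{\chi\bmod q}\overline\chi(a)\,\psi(X,\chi)$ with $\psi(X,\chi)=\sum_{n\le X}\chi(n)\Lambda(n)$. The principal character contributes $\frac{1}{\varphi(q)}\psi(X)+O\!\big(\frac{\log q\log X}{\varphi(q)}\big)$, and the prime number theorem in the form $\psi(X)=X+O(Xe^{-c\sqrt{\log X}})$ produces the main term. For each non-principal $\chi$, induced by a primitive character of conductor at most $q$, I would invoke the truncated explicit formula (via Perron's formula), $\psi(X,\chi)=-\sum_{|\gamma|\le T}\frac{X^{\rho}}{\rho}+O\!\big(\frac{X(\log qX)^2}{T}+\log qX\big)$, the sum running over the nontrivial zeros $\rho=\beta+i\gamma$ of $L(s,\chi)$.

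Then I would bound $X^{\beta}$ using the standard zero-free region: there is an absolute $c_0>0$ and at most one exceptional real zero such that all other zeros satisfy $\beta<1-c_0/\log(q(|\gamma|+2))$. Choosing $T=\exp(\sqrt{\log X})$ and using $q\le(\log X)^A$, each non-exceptional zero gives $X^{\beta}\ll X\exp(-c_1\sqrt{\log X})$, and summing over $\gamma$ (the number of zeros up to height $T$ is $O(T\log qT)$) together with the tail error from the explicit formula yields a contribution $\ll Xe^{-c\sqrt{\log X}}$ from each such $\chi$. Summing over the $\varphi(q)\le(\log X)^A$ characters and dividing by $\varphi(q)$ preserves an error of the stated shape.

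The genuinely delicate point — and the source of the ineffective constant $c=c(A)$ — is the possible Landau--Siegel zero $\beta_1$ of the (unique) real primitive character of small conductor $q_1\le q\le(\log X)^A$. Here I would appeal to Siegel's theorem: for every $\eps>0$ there is an ineffective $c(\eps)>0$ with $\beta_1<1-c(\eps)q_1^{-\eps}$. Taking $\eps=\tfrac{1}{2A}$ gives $X^{\beta_1}\ll X\exp\!\big(-c(\eps)(\log X)^{1-A\eps}\big)=X\exp\!\big(-c(\eps)\sqrt{\log X}\big)$, so the exceptional term is absorbed into the error as well; this is the only non-effective step, which accounts for the non-effectivity of $c(A)$ in the statement. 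Combining the principal-character main term, the non-exceptional zero bounds, and the Siegel bound for the exceptional contribution yields the claimed asymptotic, uniformly in the stated range.
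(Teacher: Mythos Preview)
Your sketch is correct and follows the standard argument (explicit formula, classical zero-free region, Siegel's theorem with $\eps=\tfrac{1}{2A}$ to neutralise the possible exceptional zero) that one finds in the cited reference \cite[\S 11.3]{MV07}. Note, however, that the paper does not give its own proof of this statement at all: Theorem~\ref{thm:SW_original} is simply quoted from \cite[Corollary 11.21]{MV07} and used as a black box, so there is nothing in the paper to compare your argument against beyond that citation.
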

\begin{theorem}[Blomer, {\cite[Lemma 3.1]{Blo04}}]\label{thm:blomer}
For any $A>0$, there exists a number\linebreak $c=c(A)>0$ such that
\[
\pi(X;q,C) = \frac{\li(X)}{e(C)h(q)} +O\Big(X e^{-c\sqrt{\log X}} \Big)
\]
uniformly for all $q \in \DF$ with $|q| \leq (\log X)^{A}$ and all $C \in \Kcl(q)$.
\end{theorem}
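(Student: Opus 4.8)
The plan is to use Dedekind's correspondence (Theorem~\ref{thm:cox77}) to rewrite the statement as a prime-ideal theorem with Siegel--Walfisz uniformity for the imaginary quadratic field $K=\Qpq{q}$, and then to run the classical de la Vall\'ee Poussin argument for the class group $L$-functions $L(s,\lambda_\chi)$, $\chi\in\Hhat(q)$. \emph{Reduction to prime ideals.} Put $\Oc=\Oc(q)$ and $\mathfrak C=B_q(C)\in\Hcl(q)$. For a rational prime $p$ with $\chi_q(p)=1$ the splitting dictionary of Section~\ref{sec:prelim_0} gives $p\Oc=\pF\overline\pF$ with $\No\pF=\No\overline\pF=p$ and $[\,\overline\pF\,]=[\pF]^{-1}$, and by Theorem~\ref{thm:cox77} the prime $p$ is represented by the forms in $C$ precisely when $[\pF]\in\{\mathfrak C,\mathfrak C^{-1}\}$. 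Setting
\[
\pi_K(X;\mathfrak C):=\#\{\pF\ \text{prime ideal of}\ \Oc:\ \No\pF\leq X,\ [\pF]=\mathfrak C\},
\]
a split prime $p\leq X$ is counted once by $\pi(X;q,C)$ and once by $\pi_K(X;\mathfrak C)$ if $\ord(\mathfrak C)>2$ (its two prime divisors then lie in the distinct classes $\mathfrak C\ne\mathfrak C^{-1}$), and once by $\pi(X;q,C)$ but twice by $\pi_K(X;\mathfrak C)$ if $\ord(\mathfrak C)\leq2$ (then $\mathfrak C=\mathfrak C^{-1}$ and both divisors lie in $\mathfrak C$); the ramified primes and the degree-$2$ (inert) prime ideals contribute $O(\log|q|+X^{1/2})$ on either side. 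Hence
\[
\pi(X;q,C)=\frac{1}{e(C)}\,\pi_K(X;\mathfrak C)+O\bigl(\log|q|+X^{1/2}\bigr),
\]
so, since $\log|q|\leq A\log\log X$ and $X^{1/2}\ll Xe^{-c\sqrt{\log X}}$, it remains to prove $\pi_K(X;\mathfrak C)=\li(X)/h(q)+O(Xe^{-c\sqrt{\log X}})$ uniformly for $|q|\leq(\log X)^A$ and all $\mathfrak C$. By partial summation, discarding an $O(X^{1/2}(\log X)^2)$ coming from prime-power ideals, this reduces to
\[
\Psi_K(X;\mathfrak C):=\sum_{\substack{\aF:\ \No\aF\leq X\\ [\aF]=\mathfrak C}}\wLambda(\aF)=\frac{X}{h(q)}+O\bigl(Xe^{-c\sqrt{\log X}}\bigr).
\]

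\emph{Orthogonality and the zero-free region.} Writing $\wpsi_0(X;q,\chi)=\sum_{\No\aF\leq X}\wLambda(\aF)\chi(\aF)$ as in Section~\ref{sec:prelim_1}, orthogonality of $\Hhat(q)$ gives $\Psi_K(X;\mathfrak C)=h(q)^{-1}\sum_{\chi\in\Hhat(q)}\overline\chi(\mathfrak C)\,\wpsi_0(X;q,\chi)$, so it suffices to show $\wpsi_0(X;q,\chiq_0)=X+O(Xe^{-c\sqrt{\log X}})$ and $\wpsi_0(X;q,\chi)\ll Xe^{-c\sqrt{\log X}}$ uniformly for non-trivial $\chi$ (the $h(q)-1$ such terms then contribute at most their maximum after division by $h(q)$). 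Each of these follows from the de la Vall\'ee Poussin method for $L(s,\lambda_\chi)$, whose logarithmic derivative has $n$-th Dirichlet coefficient $\sum_{\No\aF=n}\wLambda(\aF)\chi(\aF)$: one shifts the truncated Perron integral just past a zero-free region, with truncation height $T=e^{\sqrt{\log X}}$ chosen so that the analytic conductor $|q|(|t|+3)^{2}$ at $|t|\leq T$ satisfies $\log(|q|T^{2})\asymp\sqrt{\log X}$, whence $X^{1-c/\log(|q|T^{2})}+X/T\ll Xe^{-c'\sqrt{\log X}}$. For the zero-free region one uses that $\zeta_K(s)=\prod_{\psi\in\Hhat(q)}L(s,\lambda_\psi)=L(s,\lambda_{\chiq_0})$ has nonnegative Dirichlet coefficients and conductor $|q|$, so the classical argument yields a region $\sigma>1-c/\log(|q|(|t|+3))$ free of zeros apart from at most one, necessarily simple, real zero, a region inherited by every factor $L(s,\lambda_\psi)$.

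\emph{The three cases.} When $\chi=\chiq_0$, the exceptional zero is the (at most one, simple, real) zero $\beta_q$ of $L(s,\chi_q)$ in $\zeta_K=\zeta\cdot L(\cdot,\chi_q)$; Siegel's theorem gives $\beta_q\leq1-c(\eps)|q|^{-\eps}$, so its contribution $X^{\beta_q}/\beta_q$ is $\ll X\exp(-c(\eps)(\log X)^{1-A\eps})\ll Xe^{-c\sqrt{\log X}}$ for $\eps<1/(2A)$, and together with the residue $1$ of the pole at $s=1$ this gives $\wpsi_0(X;q,\chiq_0)=X+O(Xe^{-c\sqrt{\log X}})$. When $\chi$ is non-trivial and real, the Kronecker factorization of Section~\ref{sec:realchar1} gives $L(s,\lambda_\chi)=L(s,\chi_{d_1})L(s,\chi_{d_2})$ with $d_1d_2=q$, so $\wpsi_0(X;q,\chi)=\psi(X;\chi_{d_1})+\psi(X;\chi_{d_2})\ll Xe^{-c\sqrt{\log X}}$ by the Siegel--Walfisz theorem for Dirichlet $L$-functions (Theorem~\ref{thm:SW_original} and its proof, Siegel's theorem again taming a possible exceptional zero of $L(s,\chi_{d_i})$ since $|d_i|\leq|q|\leq(\log X)^A$). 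When $\chi$ is non-trivial and complex, i.e.\ $\chi^2\ne\chiq_0$, the exceptional real zero is excluded for $L(s,\lambda_\chi)$: a real zero $\beta$ of $L(s,\lambda_\chi)$ is also a zero of $L(s,\lambda_{\overline\chi})=\overline{L(\overline s,\lambda_\chi)}$, and since $\overline\chi\ne\chi$ these are distinct factors of $\zeta_K$, so $\zeta_K$ would vanish to order $\geq2$ at $\beta$, contradicting the simplicity of its unique exceptional zero. Hence $L(s,\lambda_\chi)$ is zero-free in the full region $\sigma>1-c/\log(|q|(|t|+3))$ with an absolute \emph{effective} $c$, and the standard contour argument — using the functional equation for $L(s,\lambda_\chi)$ recorded in Section~\ref{sec:complexchar1} to bound $L'/L$ on vertical lines and to count low-lying zeros — gives $\wpsi_0(X;q,\chi)\ll Xe^{-c\sqrt{\log X}}$.

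\emph{Main obstacle.} The crux is the absence of a Siegel zero for complex class group characters, which rests on the ``at most one simple real zero'' property of the Dedekind zeta function; granting that, the rest is the (by now routine) transfer of the de la Vall\'ee Poussin/Siegel--Walfisz machinery to the $L$-functions $L(s,\lambda_\chi)$, together with the elementary but slightly fiddly bookkeeping linking $\pi(X;q,C)$ to $\pi_K(X;\mathfrak C)$ and the factor $e(C)$. The constant $c=c(A)$ is ineffective, the ineffectivity entering solely through the appeals to Siegel's theorem for the real characters $\chi_q$ and $\chi_{d_i}$; everything concerning complex $\chi$ is effective.
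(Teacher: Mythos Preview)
The paper does not prove this statement; it is quoted verbatim from Blomer~\cite[Lemma~3.1]{Blo04} and used as a black box, so there is no ``paper's own proof'' to compare against. Your sketch is essentially the standard proof (and is indeed close to what one finds in the references behind Blomer's lemma, and to Goldstein's argument cited as Proposition~\ref{prop:sw-goldstein}): pass to prime ideals via Theorem~\ref{thm:cox77}, expand by orthogonality over $\Hhat(q)$, and run the de~la~Vall\'ee~Poussin/Siegel argument for each $L(s,\lambda_\chi)$, separating the trivial, real non-trivial, and complex characters exactly as you do.

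There is one genuine slip. You write $\zeta_K(s)=\prod_{\psi\in\Hhat(q)}L(s,\lambda_\psi)=L(s,\lambda_{\chiq_0})$, but these two equalities are incompatible: the second is correct (the trivial class group character gives $\zeta_K$), while the product over \emph{all} $\psi\in\Hhat(q)$ is the Dedekind zeta function $\zeta_H$ of the Hilbert class field $H$ of $K$, not $\zeta_K$. This matters for your ``no Siegel zero for complex $\chi$'' step: the correct object with nonnegative Dirichlet coefficients in which $L(s,\lambda_\chi)$ and $L(s,\lambda_{\overline\chi})$ are distinct factors is $\zeta_H$ (or, more economically, the product $\zeta_K(s)L(s,\lambda_\chi)L(s,\lambda_{\overline\chi})$, which already has nonnegative coefficients by Landau's lemma). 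Once you replace $\zeta_K$ by $\zeta_H$ the argument goes through, but note that the conductor of $\zeta_H$ is $|d_H|\asymp|q|^{h(q)}$ rather than $|q|$, so the zero-free region has width $\asymp 1/\log|d_H|\asymp 1/(h(q)\log|q|)$; since $h(q)\ll|q|^{1/2}\log|q|$ and $|q|\leq(\log X)^A$, this is still $\gg(\log X)^{-A'}$ for some $A'$, and the choice $T=e^{\sqrt{\log X}}$ then gives the stated $Xe^{-c\sqrt{\log X}}$ with $c=c(A)$. The rest of your outline (the $e(C)$ bookkeeping, the Kronecker factorisation for real $\chi$, the ineffectivity coming only from Siegel's theorem) is correct.
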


So let $g$ be the characteristic function of the primes.  Assumption \eqref{eq:bdh_SW_assumption_residues} holds by Theorem~\ref{thm:SW_original} and the Prime Number Theorem.
 As for assumption \eqref{eq:bdh_SW_assumption_forms}, we have
\begin{equation*}\label{eq:bdh_SW_assumption_forms_for_primes}
\begin{aligned}
& D(g;X;q,C)=\sum_\stacksum{p\leq X}{p \in \Repr(q,C)} w(C,p) - \frac{1}{h(q)}\sum_{K \in \Kcl(q)}\sum_\stacksum{p\leq X}{p \in \Repr(q,K)} w(K,p)  \\
=\ & 
 \pi(X;q,C) e(C) - \frac{1}{h(q)}\sum_{p\leq X} (1+ \chi_q(p)) + O(\log|q|)
\end{aligned}
\end{equation*}
by \eqref{eq:weight_w-Cn_on_average}. Assumption \eqref{eq:bdh_SW_assumption_forms} now follows from \eqref{eq:SW_for_Dirichlet_Char}, the Prime Number Theorem and\linebreak Theorem~\ref{thm:blomer}. 
The term $R(g,Q,X)$ vanishes. Thus, from \eqref{eq:bdh_general} we get 
\begin{equation}\label{eq:bdh2}
	\sum_{q \in \DF(Q)} \sum_{C \in \Kcl(q)} \bigg(\pi(X;q,C) e(C) - \frac{1}{h(q)}\sum_{p\leq X} (1+ \chi_q(p))\bigg)^2\ll_{A,\eps} Q^{1/2}X^2(\log X)^{-A}
	\end{equation}
	if $Q^{3 +\eps} \leq X(\log X)^{-2A-4}$. 
	Similarly to the argument in Section \ref{sec:prelim_1}, one shows that the contribution from exceptional discriminants to the left side of \eqref{eq:bdh2} is negligible (also compare the corresponding argument in the next section). Thus, we may assume the class number bound~\eqref{eq:classno_bound1}. Dyadic decomposition and the large sieve inequality for Dirichlet characters (Lemma~\ref{lem:lsi_org}) 
	 then yield 
	\begin{equation}\label{eq:bdh_use_lsi}
	\sum_{q \in \DF(Q)} \frac{1}{h(q)}\Big(\sum_{p\leq X} \chi_q(p)\Big)^2 \leq (\log Q)^2(Q^{3/2}X+X^2). 
	\end{equation}
	 Therefore, Theorem \ref{thm:bdh1} follows from \eqref{eq:bdh2}, \eqref{eq:bdh_use_lsi} and the Prime Number Theorem if\linebreak $Q \geq (\log X)^{2A+4}$. If $Q$ is smaller, Theorem \ref{thm:bdh1} follows directly from Theorem \ref{thm:blomer}.

\medskip
\begin{remf}
The term $R(g,Q,X)$ clearly vanishes if 
the  function $g$ is supported on primes only or 
if the set $M(Q)$ contains only prime discriminants, for example. 
Thus, we get a clean well-distribution result in these cases. It would be interesting to find other cases in which $R(g,Q,X)$ is dominated by the first term on the right-hand side of \eqref{eq:bdh_general}.
\end{remf}


\section{Proof of the general Barban--Davenport--Halberstam type result}\label{sec:bdh_proof}

We prove Theorem \ref{thm:bdh_general} in this section. 
The proof will be similar to the proofs of the theorems of Section~\ref{sec:bv}. 
First, we consider the contribution coming from the initial range of negative fundamental discriminants. Fix $A>0$. Set $Q_0=(\log X)^{L_0}$ for some $L_0>0$, which will be chosen later and which will depend on $A$ only. By assumption \eqref{eq:bdh_SW_assumption_forms} and the class number bound $h(q) \ll |q|^{1/2}(\log |q|)$, the contribution to the left-hand side of \eqref{eq:bdh_general} coming from discriminants $q$ with $|q|\leq Q_0$ is 
\[
\ll_{L_1} (\log Q_0) Q_0^{3/2} X (\log X)^{-L_1} \sum_{n \leq X} |g(n)|^2 \ll_{L_1} Q_0^{1/2} X(\log X)^{L_0-L_1+1} \sum_{n \leq X} |g(n)|^2
\]
for each $L_1 > L_0$. This is dominated by the right-hand side of \eqref{eq:bdh_general} if 
\begin{equation}\label{eq:bdh_cond_L1}
L_0-L_1+1 \leq -A.
\end{equation}
It remains to consider the large discriminants, i.e.\ all $q$ in
\[
M'(Q):=\{q \in M(Q):\, Q_0 < |q| \leq Q\}
\]
and we may assume from now on that $Q \geq Q_0$.

\medskip
By the definition \eqref{eq:weight_w-Cn_for_primepowers} of the weights $w(C,n)$, we have 
\[
D(g;X;q,C)=\sum_\stacksum{\aF \in B_q(C) \cap Z(q)}{\No(\aF) \leq X} g(\No(\aF)) - \frac{1}{h(q)}\sum_\stacksum{\aF \in Z(q)}{\No(\aF) \leq X} g(\No(\aF)).
\]
For every $q \in \DF$ and every $\chi \in \Hhat(q)$, we set 
\[
G(X;\chi,q):=\sum_\stacksum{\aF \in Z(q)}{\No(\aF) \leq X} g(\No(\aF))\chi(\aF)=\sum_{n \leq X} g(n) \lambda_\chi(n).
\]
By the orthogonality property of ideal class group characters,  
we may rewrite $D(g;X;q,C)$ as
\begin{equation*}
D(g;X;q,C)= 
\frac{1}{h(q)}\sum_{\chi \in \Hhat(q) \smallsetminus\{\chiq_0\}} \overline{\chi}(B_q(C))G(X;\chi,q).
\end{equation*}
Moreover, orthogonality also yields
\[
\sum_{C \in \Hcl(q)} \bigg|\sum_{\chi \in \Hhat(q)\smallsetminus\{\chiq_0\}} \overline{\chi}(C)G(X;\chi,q)\bigg|^2
=h(q) \sum_{\chi \in \Hhat(q)\smallsetminus\{\chiq_0\}} |G(X;\chi,q)|^2.
\]
Thus, 
the contribution from large discriminants to the left-hand side of \eqref{eq:bdh_general} is
\begin{equation*}\label{eq:bdh_erstessplitten}
\sum_{q \in M'(Q)} \sum_{C \in \Hcl(q)} \left|D(g;X; q,B_q^{-1}(C))\right|^2 
= \sum_{q \in M'(Q)} \frac{1}{h(q)} \sum_{\chi \in \Hhat(q)\smallsetminus\{\chiq_0\}} |G(X;\chi,q)|^2.
\end{equation*}
The contribution coming from exceptional discriminants is again negligible if $Q$ is not very small. Indeed, by the bound $|\DF_{\text{ex}}(Q)| \ll \log Q$ (see the proof of Proposition \ref{prop:landau-siegel1}) for the set of exceptional fundamental discriminants $q \in \DF(Q)$, the Cauchy--Schwarz inequality and the bound \eqref{eq:lambda_chi_abschaetzen}, we have
  \[
  \sum_{q\in \DF_{\text{ex}}(Q)}\frac{1}{h(q)} \sum_{\chi \in \Hhat(q)\smallsetminus\{\chiq_0\}} |G(X;\chi,q)|^2 \ll X (\log X)^4 \sum_{n \leq X} |g(n)|^2.
  \]
	In particular, the contribution to the left-hand side of \eqref{eq:bdh_general} coming from exceptional discriminants is negligible if $Q \geq (\log X)^{2A+8}$. This means that we must choose at least
\begin{equation}\label{eq:bdh_cond_L0}
L_0 \geq 2A+8
\end{equation}
 above.

Therefore it remains to estimate the contribution from non-exceptional discriminants, i.e.\ we have to bound
\begin{equation}\label{eq:bdh_gesplittet}
\sum_{q \in M''(Q)} \frac{1}{h(q)} \sum_\stacksum{\chi \in \Hhat(q)}{\chi^2 \neq \chiq_0} |G(X;\chi,q)|^2 \ +\  \sum_{q \in M''(Q)} \frac{1}{h(q)} \sum_\stacksum{\chi \in \Hhat(q)\smallsetminus\{\chiq_0\}}{\chi^2 =\chiq_0} |G(X;\chi,q)|^2, 
\end{equation}
where $M''(Q)=M'(Q) \smallsetminus \DF_{\text{ex}}(Q)$. 

\smallskip

The lower class number bound \eqref{eq:classno_bound1}, dyadic decomposition and the large sieve inequality for complex class group characters (Lemma \ref{lem:lsi_complex})  together imply that the first sum in \eqref{eq:bdh_gesplittet} is bounded above by
\begin{equation}\label{eq:bdh_complex}
\begin{aligned}
& (\log Q) \max_{Q_0 \leq Q_1 \leq Q} Q_1^{-1/2} \sum_{q \in M''(Q_1)} \sum_\stacksum{\chi \in \Hhat(q)}{\chi^2 \neq \chiq_0} |G(X;\chi,q)|^2 \\
\ll_\eps &\ (\log Q) \max_{Q_0 \leq Q_1 \leq Q} Q_1^{-1/2}  \left(X (\log X)^3+X^{1/2}(\log X)Q_1^{5/2+\eps}\right) \sum_{n \leq X} |g(n)|^2
\\
\ll\ &\ Q^{1/2}X^{1/2} \left(X^{1/2}(\log X)^4 Q^{-1/2}Q_0^{-1/2}  + (\log X)^2Q^{3/2+\eps} \right) \sum_{n \leq X} |g(n)|^2
\end{aligned}
\end{equation}
for every $\eps>0$. This is dominated by the right-hand side of \eqref{eq:bdh_general} if $Q \geq (\log X)^{2A+8-L_0}$, which is certainly satisfied if the above-mentioned condition $L_0 \geq 2A+8$ holds.

\smallskip

Like in Section \ref{sec:realchar1}, the second sum in \eqref{eq:bdh_gesplittet} is handled by reducing it to a sum over real Dirichlet characters. If $q \in \DF$ and $\chi  \in \Hhat(q)$ is a real non-trivial class group character, then the Kronecker Factorization Formula implies that $\lambda_\chi(n)$  is the Dirichlet convolution
\begin{equation}\label{eq:realclasschar_convolution}
\lambda_\chi(n)=\chi_{d_1} \ast \chi_{d_2} (n)
\end{equation}
 of two primitive real Dirichlet characters modulo the absolute values of non-trivial fundamental discriminants $d_1$ and $d_2$ with $d_1d_2=q$. Thus, if $\chi \in \Hhat(q)$ is non-trivial and real, then 
\[
G(X;\chi,q)=\sum_{n \leq X} g(n) \sum_{km=n}\chi_{d_1}(k)\chi_{d_2}(m)
\]
for some fundamental discriminants $d_1$ and $d_2$ with $d_1 d_2=q$ and $|d_1|,|d_2|>1$. Moreover, each such pair of discriminants induces one of the non-trivial real class group characters in~$\Hhat(q)$. 

 Let $F(Q)$ denote the set of all fundamental discriminants $d$ with  $1<|d|\leq Q$ (as in Section~\ref{sec:realchar1}). The second sum in~\eqref{eq:bdh_gesplittet} can thus be bounded as follows:
\begin{equation*}\label{eq:bdh_2ndsum_1}
\begin{aligned}
&\sum_{q \in M''(Q)} \frac{1}{h(q)} \sum_\stacksum{\chi \in \Hhat(q)\smallsetminus\{\chi_0\}}{\chi^2 =\chi_0} |G(X;\chi,q)|^2 
\\
=&\  
\sum_{d_1 \in F(Q)} \sum_\stacksum{d_2 \in F(Q)}{d_1d_2 \in M''(Q)} \frac{1}{h(d_1d_2)}\Big|\sum_{n \leq X} g(n) \sum_\stacksum{1 \leq k,m \leq n}{km=n} \chi_{d_1}(k) \chi_{d_2}(m)\Big|^2 
\\
\ll&\ 
(\log Q) \sum_{d_1 \in F(Q)} \sum_\stacksum{d_2 \in F(Q)}{d_1d_2 \in M''(Q)} \frac{1}{|d_1 d_2|^{1/2}} \Big|\sum_{n \leq X} g(n) (\chi_{d_1}(n) + \chi_{d_2}(n))\Big|^2  
\ +\ R(g,Q,X)
\\
\ll&\ 
(\log Q)    \sum_{d_1 \in F(Q)} \frac{1}{|d_1|^{1/2}} |\sum_{n \leq X} g(n) \chi_{d_1}(n)|^2 \sum_{d_2 \leq \frac{Q}{|d_1|}} \frac{1}{d_2^{1/2}} \ +\ R(g,Q,X)
\\
\ll&\ S_1(Q,X) +S_2(Q,X)+ R(g,Q,X),
\end{aligned}
\end{equation*}
where
\[
S_1(Q,X)=Q_0^{1/2}(\log Q) \sum_{d \in F(Q_0)} |\sum_{n \leq X} g(n) \chi_{d}(n)|^2
\]
and
\[
S_2(Q,X)=Q^{1/2}(\log Q)^2 \max_{Q_0 \leq Q_1\leq Q}\ Q_1^{-1} \sum_{d \in F(Q)} |\sum_{n \leq X} g(n) \chi_{d}(n)|^2
\]
 and $R(g,Q,X)$ was defined in Theorem \ref{thm:bdh_general}. 
By positivity and orthogonality, we have 
\begin{equation*}
\begin{aligned}
&S_1(Q,X) \leq Q_0^{1/2}(\log X) \sum_{1<d \leq Q_0} \sum_\stacksum{\chi \mods d}{\chi \neq \chi_0} |\sum_{n \leq X} g(n) \chi(n)|^2
\\
=&\  Q_0^{1/2}(\log X) \sum_{1<d \leq Q_0} \varphi(d) \sum_\stacksum{a \mods d}{(a,d)=1} \Big|\sum_\stacksum{n\leq X}{n \equiv a \mods d} g(n) - \frac{1}{\varphi(d)}\sum_\stacksum{n\leq X}{(n,d)=1} g(n)  \Big|^2.
\end{aligned}
\end{equation*}
By assumption \eqref{eq:bdh_SW_assumption_residues}, we thus have
\begin{equation*}
S_1(Q,X) \ll_{L_2} Q_0^{1/2} X (\log X)^{-L_2+1+3L_0}  \sum_{n \leq X} |g(n)|^2 
\end{equation*}
for all $L_2>L_0$. Hence, $S_1(Q,X)$ is dominated by the right side of \eqref{eq:bdh_general} if 
\begin{equation}\label{eq:bdh_cond_L2}
-L_2+1+3L_0 \leq -A.
\end{equation}

Finally, we use the large sieve inequality for Dirichlet characters, Lemma \ref{lem:lsi_org}, to bound $S_2(Q,X)$. We get
\begin{equation*}
S_2(Q,X) \ll_{L_0} Q^{1/2}(\log X)^2 (Q+XQ_0^{-1})\sum_{n \leq X} |g(n)|^2.
\end{equation*}
This is dominated by the right side of \eqref{eq:bdh_general} if $ Q+XQ_0^{-1} \leq Q^{3/2+\eps}+X(\log X)^{-A-2}$, which is certainly true if the above-mentioned condition $L_0 \geq 2A+8$ holds.

\medskip
By \eqref{eq:bdh_cond_L0}, \eqref{eq:bdh_cond_L1} and \eqref{eq:bdh_cond_L2} we also see that all implied constants above that depend on $L_0$, $L_1$ or $L_2$, can be made dependent on $A$ only, if we choose $L_0=2A+8$, $L_1=A+L_0+1$ and $L_2=A+3L_0+1$, for example. This concludes the proof of Theorem \ref{thm:bdh_general}.

\medskip
\begin{remarkf}\label{rem:BDH-Linde}
If we assume the Lindel\"of Hypothesis, we may use the conditional large sieve inequality of Remark \ref{lem:lsi_complex_conditionalLinde} instead of Lemma \ref{lem:lsi_complex}. Thus, we may then replace the term $Q^{5/2+\eps}$ in the second line of \eqref{eq:bdh_complex}  by $Q^{3/2+\eps}$;   the term $Q^{3/2+\eps}$ in the last line of \eqref{eq:bdh_complex} and in \eqref{eq:bdh_general} may therefore be replaced by $Q^{1/2+\eps}$. Thus, \eqref{eq:bdh1} holds if $Q^{1+\eps} \leq X (\log X)^{-2A-4}$, which yields the second statement in Remark \ref{rem:conditional_ranges}. 
\end{remarkf}


\section{The least prime of the shape \texorpdfstring{$x^2+ny^2$}{x2+ny2}}\label{sec:leastprime}

The statements in Corollary \ref{cor:leastprime} can be proved along the same lines as the analogous results for primes in arithmetic progressions that follow from the Bombieri--Vinogradov theorem and the Barban--Davenport--Halberstam theorem; see \cite{EH71}, for example.

\begin{remarkf}
1.  
From the  
 Siegel--Walfisz theorem for binary quadratic forms, Theorem \ref{thm:blomer}, 
it follows easily that there exists an absolute constant $L$ such that 
\[
\max_{C \in \Kcl(q)} p(q;C) \ll |q|^{L (\log |q|)}
\]
for all $q \in \DF$. 

\noindent  2.  
Kowalski and Michel have proved in \cite{KM02} a log-free zero-density estimate for automorphic forms on $\GL(n)/\Qq$ and described how this can be used to show the existence of an absolute constant $L$ such that
\begin{equation}\label{eq:linnik3}
\max_{C \in \Kcl(q)} p(q;C) \ll |q|^L
\end{equation}
for all $q \in \DF$. This bound is also a consequence of earlier results by Fogels 
\cite{Fog65,Fog67} 
and Weiss~\cite{Wei83}.  
However, no explicit admissible value for $L$ has 
yet been published. 

\noindent  3. 
The Generalized Riemann Hypothesis for 
ideal class group $L$-functions 
implies 
 that \eqref{eq:linnik3} holds for all $q \in \DF$ with $L=1+ \eps$ for all $\eps >0$.

\noindent  4.  
Assuming the Lindel\"of Hypothesis (see Remark \ref{rem:conditional_ranges}), one may replace the exponent $\frac{20}{3}+\eps$ by $2+\eps$ in \eqref{eq:linnik1} and the exponent $3+\eps$ by $1+\eps$ in \eqref{eq:linnik2}.
\end{remarkf}

\smallskip
Focussing on the primes of the special shape $x^2+ny^2$, that is, on primes represented by the principal class of discriminant $-4n$, it is interesting to investigate bounds for the values of $x_{\min{}}$ and $y_{\min{}}$ that yield the smallest prime of this form for any given positive integer~$n$. One would naturally assume that $y_{\min{}}$ is typically very small. Notwithstanding, it is somewhat surprising that numerical calculations even suggest that $y_{\min{}}>1$ can only occur for an exceedingly small set of values~$n$: Up to at least $n=10^8$,
 the smallest prime of the shape $x^2+ny^2$ is actually of the shape $x^2+n$ in all but the eleven cases
\[
n \in \{5,41,59,314,341,479,626,749,755,881,1784\}; 
\]
in all these exceptional cases we have $y_{\min{}}=2$. 
If we could show that $y_{\min{}} =1$ for all $n>1784$ (which appears to be formidable) or could at least get a nice bound for the number/density of exceptions, the problem of bounding the least prime of the shape $x^2+ny^2$ would reduce to bounding the smallest prime of the shape $x^2+n$.  Although this polynomial looks simpler than our original one, there are questions on the prime numbers which it represents that are so much tougher than for binary quadratic forms: There is no integer $n$ for which 
it is nowadays known whether 
there are infinitely many primes of the shape $x^2+n$. Nevertheless, Baier and Zhao \cite{BZ07} proved that, given~$A, B >0$, if $X^2(\log X)^{-A} \leq N \leq X^2$ then
\begin{equation*}\label{eq:baierzhao07}
\sum_\stacksum{n \leq N}{\mu(n)^2 =1} \Big| \sum_{x \leq X} \Lambda(x^2+ n) - \mathfrak{G}(n)X \Big|^2 \ll_{A,B} \frac{NX^2}{(\log X)^B}, 
\ \text{where }\  
\mathfrak{G}(n)=\prod_{p>2} \Bigg(1- \frac{\big(\frac{-n}{p}\big)}{p-1}\Bigg)
\end{equation*}
and $\big(\frac{-n}{p}\big)$ is the Jacobi symbol. Note that $\mathfrak{G}(n)$ converges and $\mathfrak{G}(n) \gg (\log n)^{-1} \gg (\log X)^{-1}$. As in Corollary \ref{cor:leastprime}, we can therefore conclude, from their result and an assumption that appears plausible by our own observations, the following average upper bound for the least prime of the shape $x^2+ny^2$: 

\begin{corollary}\label{cor:leastprime_BZ1} 
Conditional on the 
assumption that $p_0(n)$, the least prime of the shape $x^2+ny^2$, is attained for $y=1$ for all positive squarefree integers $n$ in a set of asymptotic density~$1$, we have
\[
p_0(n)   
\leq n^{2+\eps}  
\]
for all positive squarefree integers $n$ in a set of asymptotic density $1$.
\end{corollary}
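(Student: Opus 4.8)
The plan is to reproduce, with the Baier--Zhao estimate of \cite{BZ07} in place of the Bombieri--Vinogradov and Barban--Davenport--Halberstam inputs, the standard argument by which such mean-square estimates yield Linnik-type bounds valid on average (as in \cite{EH71}, and as in the deduction of Corollary~\ref{cor:leastprime}). The point is that any prime value of $x^2+n$ is a prime represented by the form $x^2+ny^2$ (take $y=1$), so locating such a value with $x$ small already bounds $p_0(n)$ from above; the stated hypothesis merely identifies $p_0(n)$ with the least prime of the form $x^2+n$, which is not actually needed here, and the argument will even give $p_0(n)\ll n$, the exponent $2+\eps$ being kept only for parallelism with Corollary~\ref{cor:leastprime}. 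It thus suffices to show that, outside a set of squarefree $n$ of asymptotic density $0$, the polynomial $x^2+n$ assumes a prime value at some $x\ll n^{1/2}$.

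To this end I would run the Baier--Zhao estimate dyadically. Fix any $B>2$ and a suitable fixed $A$ (say $A=1$). For a dyadic parameter $M$ put $N=2M$ and $X=\lceil N^{1/2}\rceil$, so that $X^2(\log X)^{-A}\leq N\leq X^2$ for all large $M$ and the estimate of \cite{BZ07} applies to the range $n\leq N$. Since $\mathfrak{G}(n)\gg(\log n)^{-1}\gg(\log M)^{-1}$ for $n\in(M,2M]$, each squarefree $n$ in that interval with $\sum_{x\leq X}\Lambda(x^2+n)<\demi\,\mathfrak{G}(n)X$ contributes $\gg M/(\log M)^2$ to the left-hand side of the Baier--Zhao inequality, whose total is $\ll M^{2}/(\log M)^{B}$; hence
\[
\#\Big\{\,n\in(M,2M]:\ \mu(n)^2=1,\ \sum_{x\leq X}\Lambda(x^2+n)<\demi\,\mathfrak{G}(n)X\,\Big\}\ \ll\ \frac{M}{(\log M)^{B-2}}.
\]
For every other squarefree $n\in(M,2M]$ one has $\sum_{x\leq X}\Lambda(x^2+n)\gg M^{1/2}/\log M$, while the $x\leq X$ for which $x^2+n$ is a proper prime power contribute only $\ll n^{1/3+o(1)}=o(M^{1/2}/\log M)$ --- the squares being controlled by $\tau(n)=n^{o(1)}$ and the higher powers being sparse --- so $x^2+n$ is an honest prime for some $x\leq X$, and consequently $p_0(n)\leq X^2+n\ll M\ll n$.

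Finally I would assemble the exceptional sets. Summing the displayed bound over dyadic $M\leq N_0$ (the last term dominating the geometric-type sum) gives at most $\ll N_0(\log N_0)^{-(B-2)}=o(N_0)$ exceptional $n\leq N_0$, so the squarefree $n$ for which $x^2+n$ is prime for some $x\ll n^{1/2}$ form a set $T$ of asymptotic density $1$; intersecting $T$ with the density-one set furnished by the hypothesis is harmless, and discarding the finitely many small $n$ leaves a density-one set on which $p_0(n)\leq n^{2+\eps}$. The only genuinely substantial ingredient is the Baier--Zhao estimate itself, which I take as given, and the deduction presents no essential difficulty: the one point requiring a little care is the passage from ``$\sum_{x\leq X}\Lambda(x^2+n)$ bounded below'' to ``$x^2+n$ prime for some $x\leq X$'', i.e.\ ruling out that proper prime powers $x^2+n=p^k$ $(k\geq2)$ account for the whole sum, which is exactly the sparsity bound used above. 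The remaining steps --- verifying the constraint $X^2(\log X)^{-A}\leq N\leq X^2$ for the dyadic choice, collating the dyadic exceptional sets, and intersecting with the hypothesis's density-one set --- are routine.
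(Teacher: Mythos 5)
Your argument is correct and is essentially the proof the paper has in mind --- the Elliott--Halberstam device applied to the Baier--Zhao mean-square estimate, in direct analogy with the deduction of Corollary~\ref{cor:leastprime} from the Bombieri--Vinogradov and Barban--Davenport--Halberstam inputs; you have also filled in the one real detail the paper leaves implicit, namely ruling out proper prime powers $x^2+n=p^k$ ($k\geq 2$) via the sparsity bound $\ll n^{1/3+o(1)}=o(M^{1/2}/\log M)$. Your parenthetical observation that the density-one hypothesis is superfluous for the \emph{upper} bound is also correct: since $x^2+n=x^2+n\cdot 1^2$ is always of the form $x^2+ny^2$, any prime value of $x^2+n$ already bounds $p_0(n)$ from above, so the hypothesis (that the infimum is \emph{attained} at $y=1$) would only matter for a matching lower bound; and since the Baier--Zhao constraint forces $X\asymp N^{1/2}$, the argument actually yields the stronger conclusion $p_0(n)\ll n$, the exponent $2+\eps$ apparently being retained only to echo the conditional (Lindel\"of) exponent in Corollary~\ref{cor:leastprime}.
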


\end{document}